\newtheorem{conjecture}{Conjecture}
\newtheorem{theorem}{Theorem}
\newtheorem{lemma}[theorem]{Lemma}
\newtheorem{proposition}[theorem]{Proposition}
\newtheorem{observation}[theorem]{Observation}
\newcommand\AAA{{\mathbb A}}
\newcommand\CC{{\mathcal C}}
\newcommand\JJ{{\mathcal J}}
\newcommand\HH{{\mathcal H}}
\newcommand\NN{{\mathbb N}}
\newcommand\RR{{\mathbb R}}
\newcommand\WW{{\mathcal W}}
\newcommand\dd{\,\mbox{d}}
\newcommand{\wf}{\widehat{f}}
\newcommand{\wg}{\widehat{g}}
\newcommand{\wA}{\widehat{A}}
\newcommand{\wh}{\widehat{h}}
\newcommand{\wm}{\widehat{m}}
\newcommand{\wx}{\widetilde{x}}
\newcommand{\wt}{\widetilde{t}}
\newcommand{\whw}{w'}
\newcommand{\whwtw}{\widetilde{w}'}
\newcommand{\wht}{t'}
\newcommand{\whS}{S'}
\newcommand{\wha}{a'}
\newcommand{\whb}{b'}
\newcommand{\cJ}{\mathcal{J}}
\newcommand{\wtw}{\widetilde{w}}
\newcommand{\Wt}{\widetilde{W}}
\newcommand{\parti}{\mathcal{P}}
\newcommand{\partis}{\mathscr{P}}
\newcommand{\vecz}[1]{\vec{#1}}
\newcommand{\eps}{\varepsilon}
\newcommand\Aut{\mbox{Aut}}
\newcommand{\coord}[1]{\left\llbracket #1 \right\rrbracket}
\begin{document}
\title{Elusive extremal graphs}

\author{Andrzej Grzesik\thanks{Faculty of Mathematics and Computer Science, Jagiellonian University, {\L}ojasiewicza 6, 30-348 Krak\'{o}w, Poland. Previous affiliation: DIMAP and Department of Computer Science, University of Warwick, Coventry CV4 7AL, UK. E-mail: {\tt Andrzej.Grzesik@uj.edu.pl}. The work of this author was supported by the National Science Centre grant number 2016/21/D/ST1/00998 and by the Engineering and Physical Sciences Research Council Standard Grant number EP/M025365/1.}\and
        Daniel Kr\'al'\thanks{Faculty of Informatics, Masaryk University, Botanick\'a 68A, 602 00 Brno, Czech Republic, and Mathematics Institute, DIMAP and Department of Computer Science, University of Warwick, Coventry CV4 7AL, UK. E-mail: {\tt dkral@fi.muni.cz}. The work of this author was supported by the European Research Council (ERC) under the European Union’s Horizon 2020 research and innovation programme (grant agreement No 648509) and by the Engineering and Physical Sciences Research Council Standard Grant number EP/M025365/1. This publication reflects only its authors' view; the European Research Council Executive Agency is not responsible for any use that may be made of the information it contains.}\and
	L\'aszl\'o Mikl\'os Lov\'asz\thanks{Department of Mathematics, Massachusetts Institute of Technology, Cambridge, MA, 02139, USA. E-mail: {\tt lmlovasz@mit.edu}. This author was supported by NSF Postdoctoral Fellowship Award DMS-1705204.}}
\date{}
\maketitle
\begin{abstract}
We study the uniqueness of optimal solutions to extremal graph theory problems.
Lov\'asz conjectured that 
every finite feasible set of subgraph density constraints
can be extended further by a finite set of density constraints so that
the resulting set is satisfied by an asymptotically unique graph.
This statement is often referred to as saying that ``every extremal graph theory problem has a finitely forcible optimum''.
We present a counterexample to the conjecture.
Our techniques also extend to a more general setting involving other types of constraints.
\end{abstract}


\section{Introduction}
\label{sec-intro}

Many problems in extremal graph theory do not have asymptotically unique solutions.
As an example,
consider the problem of minimizing the sum of the induced subgraph densities of $K_3$ and its complement.
It can be shown that this sum is minimized by any $n$-vertex graph where all vertices have degrees equal to $n/2$.
For example, the complete bipartite graph $K_{n/2,n/2}$,
the union of two $(n/2)$-vertex complete graphs, or (with high probability) an Erd\H{o}s-R\'enyi random graph $G_{n,1/2}$ all minimize the sum.
However, the structure of an optimal solution can be made unique by adding additional density constraints.
In our example, setting the triangle density to be zero forces the structure to be that of the complete bipartite graph
with parts of equal sizes.
Alternatively, fixing the density of cycles of length four forces the structure to be that of a quasirandom graph.
The most frequently quoted conjecture concerning dense graph limits is
a conjecture of Lov\'asz~\cite{bib-lovasz-open,bib-lovasz-large,bib-lovasz-book,bib-lovasz11+}, stated as Conjecture~\ref{conj-main} below,
which asserts that this is a general phenomenon for a large class of problems in extremal graph theory.
We disprove this conjecture.

We treat Conjecture~\ref{conj-main} in the language of the theory of graph limits, to
which we provide a brief introduction.
This theory has offered analytic tools to represent and analyze large graphs, and has
led to new tools and views on various problems in mathematics and computer science.
We refer the reader to a monograph by Lov\'asz~\cite{bib-lovasz-book} for a detailed
introduction to the theory.
The theory is also closely related to the flag algebra method of Razborov~\cite{bib-razborov07},
which changed the landscape of extremal combinatorics~\cite{bib-razborov-interim}
by providing solutions and substantial progress on many long-standing open problems, see,
e.g.~\cite{bib-flag1, bib-flag2, bib-flagrecent, bib-flag3, bib-flag13, bib-flag4, bib-flag5, bib-flag6,
bib-flag7, bib-flag8, bib-flag9, bib-flag10, bib-razborov07,bib-flag11, bib-flag12}.

The \emph{density} of a $k$-vertex graph $H$ in $G$, denoted by $d(H,G)$,
is the probability that a uniformly randomly chosen $k$-tuple of vertices of $G$ induce a subgraph isomorphic to $H$;
if $G$ has less than $k$ vertices, we set $d(H,G)=0$.
A sequence of graphs $(G_n)_{n\in\NN}$
is \emph{convergent} if the sequence $(d(H,G_n))_{n\in\NN}$ is convergent for every graph $H$.
In this paper, we only consider convergent sequences of graphs where
the number of vertices tends to infinity.

A convergent sequence $(G_n)_{n\in\NN}$ of graphs is \emph{finitely forcible}
if there exist graphs $H_1,\ldots,H_{\ell}$ with the following property:
if $(G'_n)_{n\in\NN}$ is another convergent sequence of graphs such that
$$\lim_{n\to\infty}d(H_i,G_n)=\lim_{n\to\infty}d(H_i,G'_n)$$
for every $i=1,\ldots,\ell$, then
$$\lim_{n\to\infty}d(H,G_n)=\lim_{n\to\infty}d(H,G'_n)$$
for every graph $H$.
For example,
one of the classical results on quasirandom graphs~\cite{bib-thomason, bib-thomason2, bib-chung89+} is equivalent to saying that a sequence of Erd\H os-R\'enyi random
graphs is finitely forcible (by densities of $4$-vertex subgraphs) with probability one. Lov\'asz and S\'os~\cite{bib-lovasz08+} generalized this result 
to graph limits corresponding to stochastic block models (which are represented by step graphons).
Additional examples of finitely forcible sequences can be found, e.g., in~\cite{bib-perm,bib-lovasz11+}.

One of the most commonly cited problems concerning graph limits is the following conjecture of Lov\'asz,
which is often referred to as saying that ``every extremal problem has a finitely forcible optimum'',
see~\cite[p.~308]{bib-lovasz-book}.
The conjecture appears in various forms, also sometimes as a question, in the literature;
we include only some of the many references with its statement below.
\begin{conjecture}[{Lov\'asz \cite[Conjecture 3]{bib-lovasz-open},\cite[Conjecture 9.12]{bib-lovasz-large},\cite[Conjecture 16.45]{bib-lovasz-book}, and \cite[Conjecture 7]{bib-lovasz11+}}]
\label{conj-main}
Let $H_1,\ldots,H_\ell$ be graphs and $d_1,\ldots,d_\ell$ reals.
If there exists a convergent sequence of graphs with the limit density of $H_i$ equal to $d_i$, $i=1,\ldots,\ell$,
then there exists a finitely forcible such sequence.
\end{conjecture}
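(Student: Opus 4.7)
The plan is to refute Conjecture~\ref{conj-main} by exhibiting an explicit finite list of graphs $H_1,\ldots,H_\ell$ and target densities $d_1,\ldots,d_\ell$ such that the feasibility region $F=\{W:d(H_i,W)=d_i\text{ for all }i\}$ in graphon space is non-empty, yet every $W\in F$ fails to be finitely forcible. Passing to graphons is essential because finite forcibility is a property of the limit object, so the conjecture is equivalent to saying that every non-empty feasibility region in graphon space contains a finitely forcible graphon.

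The first step is to isolate a structural property $P$ of graphons that is incompatible with finite forcibility. The natural route is through the rigidity theory of finitely forcible graphons: a finitely forcible $W$ must have its typical vertex space (the $L^1$-closure of its neighborhood functions) essentially pinned down by finitely many subgraph densities, and in particular cannot support a truly continuous internal degree of freedom that is invisible to any finite algebra of subgraph densities. I would take $P$ to be the assertion that the graphon contains such a free internal parameter, encoded in a distinguished sub-block whose variation leaves all densities of $H_1,\ldots,H_\ell$ unchanged.

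The second step is to engineer a finite set of density constraints whose entire feasibility region has property $P$. I would build, within any feasible $W$, a rigid ``scaffold'' that forces a distinguished sub-block $B$ of prescribed size and location to carry a completely unconstrained internal graphon $U$, so that every graphon on $[0,1]$ arises (up to weak isomorphism) as the content of $B$ for some $W\in F$. The scaffold design is reminiscent of the decorated-partition constructions used in previous work on finitely forcible graphons with rich internal structure, but employed in reverse: instead of forcing a specific $U$, the constraints force only that the scaffold hosts some $U$. Once this is achieved, any finite extension of the constraint set translates, via the scaffold, to a finite set of density conditions on $U$; a universality argument then shows that uncountably many pairwise weakly non-isomorphic choices of $U$ still remain, so no single $W\in F$ can be finitely forcible.

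The main obstacle will be the scaffold construction itself. The scaffold must simultaneously be rigid enough to be pinned by finitely many density equations on $W$, robust enough that every finite extension of the constraints still leaves $U$ essentially free, and coherent enough that the resulting $W$ is a genuine symmetric measurable $[0,1]$-valued function for every admissible $U$. Calibrating these three demands against one another, using only finitely many density constraints, is where I expect the technical heart of the argument to lie; in particular, one needs the encoded freedom to be infinite-dimensional in a sense strong enough to survive against any adversarially chosen finite collection of additional test graphs.
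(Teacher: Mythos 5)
Your overall strategy---force a rigid scaffold by finitely many densities and leave an internal block variable---matches the paper's starting point, but your core logical step fails. If the scaffold hosts a \emph{completely} unconstrained graphon $U$, so that every graphon occurs as the content of the distinguished block for some $W\in F$, then $F$ contains finitely forcible graphons, and Conjecture~\ref{conj-main} is \emph{not} refuted: the conjecture only asks that \emph{some} member of the feasibility region be finitely forcible, and one is free to add finitely many further constraints to single it out. Concretely, take the member of your family whose block content is $U\equiv 1/2$. Via the scaffold (decorated constraints together with Lemma~\ref{lm-decorated}), the relative edge density and the relative $C_4$-density of the block are expressible through finitely many subgraph densities of $W$ itself; fixing them at $1/2$ and $1/16$ forces the block to be quasirandom and hence pins this particular $W$ up to weak isomorphism---indeed Theorem~\ref{thm-universal} already says that a scaffold hosting a suitable \emph{fixed} graphon is finitely forcible. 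Your closing ``universality argument''---that after any finite extension uncountably many weakly non-isomorphic choices of $U$ remain feasible---is true but beside the point: finite forcibility of one $W\in F$ only requires that the chosen finite list of densities identify \emph{that} $W$ among all graphons, not that it collapse the entire region.

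What a correct refutation needs, and what the paper spends Sections~\ref{sec-gensetup}--\ref{sec-analyze} building, is a variable part that is neither free nor finite-dimensional: the block encodes a vector $z\in[0,1]^{\NN}$ restricted by a countable family of polynomial constraints (a bounding sequence), and this family is constructed by a diagonalization against an enumeration of all graphs, using the analytic stabilization machinery of Section~\ref{sec-diag} (implicit-function and Gr\"onwall-type estimates, excellent stabilizing systems), so that on the resulting feasible set the density of the $k$-th graph depends only on finitely many coordinates of the parameter, while every later coordinate retains a positive-measure interval of freedom. This finite-dependence property is exactly what defeats an adversary who may impose an arbitrary finite collection of additional density constraints; ``invisibility to every finite algebra of densities'' cannot be achieved by a single unconstrained block, since any change of the block that alters the graphon up to weak isomorphism alters some subgraph density. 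Your proposal omits this quantification-over-adversaries step, and without it the construction you describe yields a feasibility region that does contain finitely forcible graphons, so it does not disprove the conjecture.
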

Our main result (Theorem~\ref{thm-main}) implies that the conjecture is false. 
The proof of Theorem~\ref{thm-main} uses analytic objects (graphons) provided by the theory of graph limits,
however, unlike in earlier results concerning finitely forcible graph limits,
we have parametrized a whole family of objects that represent large graphs and
applied analytic tools to understand the behavior of these objects.
To control the objects in the family, we have adopted the method of decorated constraints.
This method builds on the flag algebra method of Razborov, which we have mentioned earlier, and
was used to enforce an asymptotically unique structure of graphs.
In the setting of the proof of Theorem~\ref{thm-main}, the method is used to enforce
a unique structure inside a large graph while keeping some of its other parts variable.

We now explain the general strategy of the proof of Theorem~\ref{thm-main} in more detail.
By fixing finitely many subgraph densities,
we will enforce the asymptotic structure of graphs except for certain specific parts.
The structure of the variable parts will depend on a vector $z\in [0,1]^\NN$ in a controlled way,
in particular, one of the variable parts of the graphs will encode the values $z_i$ as edge densities.
We will also have a countable set of polynomial inequalities that the values $z_i$
will satisfy for any graph in the family that we consider;
this will be enforced by the variable parts and the subgraph densities.
This set of polynomial inequalities is constructed iteratively, and each time we add new constraints,
the graphs in the family change in a controlled way.
We eventually construct a large subset of $[0,1]^\NN$,
which is defined by a countable set of polynomial inequalities,
such that \emph{any} subgraph density in the graph corresponding to $z$ is determined by finitely many coordinates of $z$.
This will yield a counterexample to Conjecture~\ref{conj-main}.

While we present our arguments to avoid forcing the unique structure of a graph by subgraph densities,
our approach also applies to additional graph parameters, as discussed in Section~\ref{sec-concl}.
For example, one may hope that even if it is not possible to force an asymptotically unique structure
by fixing a finite number of subgraph densities,
it may be possible to force a unique typical structure,
i.e., obtain an asymptotically unique structure maximizing the entropy.
However, this hope is dismissed by Theorem~\ref{thm-entropy}.

The proof of Theorem~\ref{thm-main} presented in this paper is not constructive.
However, as we discuss in Section~\ref{sec-concl},
a constructive proof can be obtained using the main result of~\cite{bib-turing},
which is an earlier (but unrefereed) version of \cite{bib-universal}.
It is also worth noting that
all graphs $H_1,\ldots,H_{\ell}$ in the statement of Theorem~\ref{thm-main} have at most $225$ vertices;
we give further details in Section~\ref{sec-concl} after presenting the proof of Theorem~\ref{thm-main}.

The paper is organized as follows.
In Section~\ref{sec-prelim},
we start by introducing the notation that we use.
We next give a broad outline and set up some basic concepts for the proof of Theorem~\ref{thm-main} in Section~\ref{sec-gensetup},
including a summary of the construction of a family of graphons with finitely forcibly structure,
which is parameterized by $z\in [0,1]^\NN$.
We also state the properties of the construction that we need for the proof.
We next detail the proof, assuming such a construction exists.
We first present an iterative way of restricting values of
well-behaved functions in countably many variables in Section~\ref{sec-diag}.
In Section~\ref{sec-main}, we show how this implies Theorem~\ref{thm-main}.
In Section~\ref{sec-family},
we detail the construction of the family of graphons and prove that it can be forced by finitely many constraints.
Finally, in Section~\ref{sec-analyze}, we analyze the dependence of the graphons in the family on $z\in [0,1]^\NN$.

\section{Preliminaries}
\label{sec-prelim}

In this section, we introduce notation used throughout the paper.
We start with some general notation.
The set of all positive integers is denoted by $\NN$,
the set of all non-negative integers by $\NN_0$, and
the set of integers between $1$ and $k$ (inclusive) by $[k]$.
All measures considered in this paper are Borel measures on $\RR^d$, $d\in\NN$.
If a set $X\subseteq\RR^d$ is measurable, then we write $|X|$ for its measure, and
if $X$ and $Y$ are two measurable sets, then we write $X\sqsubseteq Y$ if $|X\setminus Y|=0$.

We will be working with vectors with both finitely many and countably infinitely many coordinates, and
sometimes with double coordinates.
If $X\subseteq [0,1]$ and $J$ is a set, then the set $X^J$ is the set of vectors with coordinates from $X$ indexed by $J$.
In particular, $[0,1]^\NN$ denotes the set of all vectors with coordinates indexed by positive integers and
each coordinate from the interval $[0,1]$.
We will always understand $X^d$ to be $X^{[d]}$.
If $x\in\RR^J$, $J'\subseteq J$ and $\eps>0$,
then we define
$$N_{\eps,J'}(x)=\{x'\mbox{ s.t. } |x_j-x'_j|<\eps \mbox{ for } j\in J', \mbox{ and }x_j=x'_j \mbox{ for } j\not\in J'\}\,\mbox{.}$$
If $X\subseteq\RR^J$, $J'\subseteq J$ and $\eps>0$,
then we set
\[N_{\eps,J'}(X)=\bigcup_{x\in X}N_{\eps,J'}(x)\,\mbox{.}\]
If $J'=J$, we will omit the second index in the subscript.
Finally, we use $\overline{N}_{\eps,J'}(X)$ for the closure of $N_{\eps,J'}(X)$.

We will often work with the following set of double indexed vectors:
$$\AAA=\prod_{i\in\NN}[0,1]^{i+1}\,\mbox{.}$$
The elements of $\AAA$ are vectors, each having coordinates indexed by $\NN$ and
the $i$-th coordinate being a vector from $[0,1]^{i+1}$.
For example, if $\vecz{a}\in\AAA$, then $\vecz{a}_2\in [0,1]^3$ and the coordinates of $\vecz{a}_2$ are $\vecz{a}_{2,1}$, $\vecz{a}_{2,2}$ and $\vecz{a}_{2,3}$.
We define the following bijection between the elements of $[0,1]^{\NN}$ and $\AAA$:
if $a\in [0,1]^{\NN}$, we define $\vecz{a}\in\AAA$ so that
$a=(\vecz{a}_{1,1},\vecz{a}_{1,2},\vecz{a}_{2,1},\vecz{a}_{2,2},\vecz{a}_{2,3},\ldots)$.
Using this bijection $a \to \vecz{a}$,
we will understand the corresponding elements of $[0,1]^{\NN}$ and $\AAA$ to be the same, and
we use the arrow overscript to distinguish between the corresponding elements of $[0,1]^{\NN}$ and $\AAA$,
i.e., whether the vector is single or double indexed.

We say that the function $f:[0,1]^{\NN}\to\RR$ is {\em totally analytic}
if the following two properties hold:
\begin{itemize}
	\item for any finite multiset $M$ of positive integers, the partial derivative 
	\[\frac{\partial^{|M|}}{(\partial z)^M} f(z)
	\] is a continuous function with respect to the product topology on $[0,1]^\NN$, and
	\item for any fixed $(z_{n+1}^0,z_{n+2}^0,\ldots)\in [0,1]^{\NN\setminus [n]}$, the function
	\[f(z_1,z_2,\ldots,z_n,z_{n+1}^0,\ldots)
	\] can be extended to an analytic function of the variables $(z_1,z_2,\ldots,z_n)$ on an open set containing $[0,1]^n$.
\end{itemize}
We say that two totally analytic functions are {\em $\eps$-close}
if the functions and all their first partial derivatives differ by at most $\eps$ on $[0,1]^\NN$.
We say that a function $f$ is a {\em polynomial} in $z_i$, $i\in\NN$,
if it is a polynomial in finitely many of the variables $z_i$, $i\in\NN$.
Clearly, every polynomial in $z_i$, $i\in\NN$, is totally analytic.

\subsection{Graph limits}

We now introduce some notions from the theory of graph limits,
which we have not covered in Section~\ref{sec-intro}.
To simplify our notation, if $G$ is a graph,
we will write $|G|$ for its order, i.e., its number of vertices.

Each convergent sequence of graphs $(G_n)_{n\in \NN}$ can be associated with an analytic limit object,
which is called a graphon. Formally,
a \emph{graphon} is a symmetric measurable function $W$ from $[0,1)^2$ to the unit interval $[0,1]$,
where \emph{symmetric} refers to the property that $W(x,y) = W(y,x)$ for all $x, y \in [0, 1)$.
We remark that it is more usual to define graphons as symmetric measurable functions from the closed unit square $[0,1]^2$;
however, both definitions represent the same notion and it is more convenient for us to work with half-open intervals.
Graphons are often visualized by a unit square filled in with different shades of gray
representing the values of $W(x,y)$ (with $0$ being represented by white and $1$ by black).
We will often refer to the points of $[0,1)$ as {\em vertices}, and
we think of a graphon $W$ intuitively as a continuous version of an adjacency matrix.
Because of this, the origin $(0,0)$ is always drawn in the top left corner in visualizations of graphons.

We next present a connection between convergent sequences of graphs and graphons.
Given a graphon $W\!$, a {\em $W$-random graph} of order $n$ is a graph obtained from $W$
by sampling $n$ points from $[0, 1)$ independently and uniformly at random,
associating each point with one of the $n$ vertices, and
joining two vertices by an edge with probability $W(x,y)$ where $x$ and $y$ are the points in $[0,1)$ associated to them.
The density of a graph $H$ in a graphon $W$, denoted by $d(H, W)$,
is the probability that a $W$-random graph of order $|H|$ is isomorphic to $H$.
Observe that the expected density of $H$ in a $W$-random graph of order $n\ge |H|$ is equal to $d(H,W)$. It also holds that
the density of $H$ in a $W$-random graph is concentrated around its expected density by standard martingale arguments.
We will also use the labeled version of the density of a graph $H$ in a graphon $W$,
which is the probability that a $W$-random graph is the graph $H$ and
the $i$-th vertex of the $W$-random graph is the $i$-th vertex of the graph $H$.
This probability will be denoted by $\tau(H,W)$ and it holds that
\[\tau(H,W)=\int_{[0,1)^{V(H)}} \prod_{\{u,v\} \in E(H)} W(x_u,x_v) \prod_{\{u,v\} \in \binom{V(H)}{2}\setminus E(H)} (1-W(x_u,x_v)) dx_{V(H)}
.\]
Note that $d(H,W)$ is equal to $\tau(H,W)$ multiplied by $|H|!/|\Aut(H)|$.

A graphon $W$ is a {\em limit} of a convergent sequence of graphs $(G_n)_{n\in \NN}$ if
$$\lim_{n\to\infty} d(H,G_n)=d(H,W)$$
for every graph $H$.
A standard martingale argument yields that
if $W$ is a graphon,
then a sequence of $W$-random graphs with increasing orders converges to $W$ with probability one.
In the other direction, Lov\'asz and Szegedy~\cite{bib-lovasz06+} showed that
every convergent sequence of graphs has a limit graphon.
However, the limit is in general not unique.
We say that two graphons $W_1$ and $W_2$ are {\em weakly isomorphic} if $d(H, W_1) = d(H, W_2)$ for every graph $H$,
i.e., the graphons $W_1$ and $W_2$ are limits of the same convergent sequences of graphs.
Borgs, Chayes and Lov\'asz~\cite{bib-borgs10+} proved that two graphons $W_1$ and $W_2$ are weakly isomorphic
if and only if there exists a third graphon $W$ and measure preserving maps $\varphi_1, \varphi_2: [0, 1)\to [0, 1)$ such that
$W_i(\varphi_i(x),\varphi_i(y))=W(x,y)$ for almost every $(x, y) \in [0, 1)^2$ and $i=1,2$.

Because of Conjecture~\ref{conj-main},
we are interested in graphons that are uniquely determined, up to weak isomorphism, by the densities of a finite set of graphs.
Formally, a graphon $W$ is {\em finitely forcible} if
there exist graphs $H_1,\ldots,H_\ell$ such that
if a graphon $W'$ satisfies $d(H_i, W')=d(H_i,W)$ for $i\in [\ell]$, then $W$ and $W'$ are weakly isomorphic.
The following characterization of finitely forcible graphons is one of the motivations coming from extremal graph theory
for Conjecture~\ref{conj-main}.
\begin{proposition}
\label{prop-extr}
A graphon $W$ is finitely forcible if and only if
there exist graphs $H_1,\ldots,H_\ell$ and reals $\alpha_1,\ldots,\alpha_\ell$ such that
$$\sum_{i=1}^\ell\alpha_i d(H_i,W)\le\sum_{i=1}^\ell\alpha_i d(H_i,W')$$
for every graphon $W'$, with equality if only if $W$ and $W'$ are weakly isomorphic.
\end{proposition}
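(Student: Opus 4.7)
I would split the proof into two implications.

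\emph{Extremal implies finitely forcible.} Suppose $H_1,\ldots,H_\ell$ and $\alpha_1,\ldots,\alpha_\ell$ witness the extremal property for $W$. If $W'$ is any graphon satisfying $d(H_i,W')=d(H_i,W)$ for every $i\in[\ell]$, then $\sum_i\alpha_i d(H_i,W')=\sum_i\alpha_i d(H_i,W)$, which is the minimum value of the linear combination over graphons and is, by hypothesis, attained only by graphons weakly isomorphic to $W$. Hence the same graphs $H_1,\ldots,H_\ell$ witness that $W$ is finitely forcible. This direction is essentially immediate.

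\emph{Finitely forcible implies extremal.} Let $H_1,\ldots,H_\ell$ force $W$, set $d_i=d(H_i,W)$, and consider the ``sum of squared errors''
$$F(W')=\sum_{i=1}^\ell\bigl(d(H_i,W')-d_i\bigr)^2.$$
Then $F(W')\ge 0$, with equality if and only if $d(H_i,W')=d_i$ for every $i$, which by finite forcibility is equivalent to $W'$ being weakly isomorphic to $W$. So $W$ is the unique (up to weak isomorphism) minimizer of $F$ among all graphons, attaining the value $0$.

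The remaining step is to rewrite $F$ as a linear combination of subgraph densities. The linear terms $-2d_i\,d(H_i,W')$ already have this form, and the additive constant $\sum_i d_i^2$ can be absorbed into the coefficient of $d(K_1,W')$, since $d(K_1,W')=1$. For the quadratic terms I would use the sampling interpretation: drawing $|H_i|$ vertices of $W'$ twice independently has the same distribution as drawing $2|H_i|$ vertices jointly and splitting them into a first half and a second half, so that the event ``both halves induce a copy of $H_i$'' decomposes over the isomorphism types $F$ of $2|H_i|$-vertex graphs. Enumerating these yields an identity of the form
$$d(H_i,W')^2=\sum_F c_{i,F}\,d(F,W')$$
for suitable nonnegative rational coefficients $c_{i,F}$. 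Substituting this into the expansion of $F(W')$ produces a finite linear combination $\sum_j \beta_j\,d(F_j,W')$ of densities with the required extremal property.

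The main obstacle is the combinatorial bookkeeping in that last identity: reconciling the normalizations among $d(H,W')$, its labeled version $\tau(H,W')$, and the automorphism factor $|H|!/|\Aut(H)|$ that relates them, so that the counting of $2|H_i|$-vertex graphs together with their allowed splittings produces exactly $d(H_i,W')^2$. Once this standard fact is cleanly written, everything else is straightforward algebraic rearrangement of the nonnegative quadratic $F$.
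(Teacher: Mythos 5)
The paper states Proposition~\ref{prop-extr} without proof (it is quoted as a known motivating characterization, going back to Lov\'asz and Szegedy), so there is no in-paper argument to compare against; your proposal is the standard proof and it is correct. Both implications are sound: the easy direction is exactly as you say, and for the converse the sum-of-squares functional works once you invoke the standard fact that $d(H,W')^2$ is a nonnegative rational combination of densities of $2|H|$-vertex graphs; the bookkeeping you worry about is harmless because a $W'$-random graph on $2|H|$ vertices has an exchangeable distribution, so conditioned on its isomorphism type $F$ it is uniform over the labeled copies of $F$, giving $c_{i,F}$ as the fraction of labeled copies of $F$ whose two prescribed halves both induce $H_i$, while the two halves themselves are independent $W'$-random graphs, so the left-hand side is indeed $d(H_i,W')^2$. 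Absorbing the constant via $d(K_1,W')=1$ is fine, and the resulting finite linear combination attains its minimum exactly at the weak isomorphism class of $W$, as required.
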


While the initial results on the structure of finitely forcible graphons suggested that
all finitely forcible graphons could possess a simple structure~\cite[Conjectures~9 and 10]{bib-lovasz11+},
this turned out not to be the case~\cite{bib-reg,bib-inf,bib-comp} in general.
In particular, every graphon can be a subgraphon of a finitely forcible graphon~\cite{bib-universal}.
Here, we say that $W'$ is a {\em subgraphon} of $W$
if there exists a non-null subset $Z\subseteq [0,1)$ and a measure preserving map $\varphi:Z\to [0,|Z|)$ such that
$W(x,y)=W'\left(\varphi(x)/|Z|,\varphi(y)/|Z|\right)$ for all $x,y\in Z$.

\begin{theorem}[{Cooper et al.~\cite{bib-universal}}]
\label{thm-universal}
There exist graphs $H_1,\ldots,H_\ell$ with the following property.
For every graphon $W_F$,
there exist a graphon $W_0$ and reals $d_1,\ldots,d_\ell\in [0,1]$ such that any graphon $W$ with $d(H_i,W)=d_i$ for all $i\in [\ell]$ is weakly isomorphic to $W_0$, and
$W_F$ is a subgraphon of $W_0$ formed by a $1/14$ fraction of its vertices. 
In particular, the graphon $W_0$ is finitely forcible.
\end{theorem}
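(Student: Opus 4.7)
The plan is to construct $W_0$ explicitly by splitting $[0,1)$ into thirteen intervals $J_1,\ldots,J_{13}$ of equal measure $1/13$, placing (a rescaled copy of) $W_F$ on $J_1\times J_1$, and using the remaining twelve parts together with all cross-interactions as a rigid \emph{template} that (a) is forced up to weak isomorphism by a universal finite set of subgraph densities, and (b) carries enough ``decorative'' data to pin down almost every value $W_F(x,y)$ for $x,y\in J_1$. Only the numerical values $d_i$ depend on $W_F$; the graphs $H_1,\ldots,H_\ell$ themselves are fixed once and for all, independently of $W_F$.

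The key steps are the following. First, I would design the template so that it imposes a rigid \emph{coordinate system} on $J_1$: one part of the template encodes a half-graph-like structure inducing a total order on $J_1$, while further parts encode successive bits of a dyadic address of each vertex $x\in J_1$, so that the identity of a vertex in $J_1$ is recoverable from its neighbourhood profile in the template. Second, I would pack the values of $W_F$ at a countable dense set of pairs in $J_1\times J_1$ into finitely many real parameters, using measurable graphon gadgets that realise a sequence of bits as a single density (essentially a graphon whose local profile encodes a real number by its binary expansion); the whole of $W_F$ is then recoverable up to a set of measure zero from $d_1,\ldots,d_\ell$ by measurability. Third, I would prove finite forcibility of the resulting $W_0$ by applying the decorated constraints variant of Razborov's flag algebra method: structural requirements on the template (the thirteen-part partition, the order on $J_1$, the dyadic addresses, and the arithmetic linking $d_i$ to local densities of $W_F$) are translated into linear combinations of subgraph density inequalities that are tight precisely at $W_0$, so Proposition~\ref{prop-extr} yields finite forcibility.

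The main obstacle is the simultaneous forcing in the third step: the same finite list $H_1,\ldots,H_\ell$ must force the partition into thirteen parts, the internal structure of each of the twelve template parts together with all their cross-blocks (up to relabellings that do not affect $W_F$), and the values of $W_F$ at a dense set of pairs in $J_1\times J_1$, all at once and independently of which $W_F$ was chosen. The difficulty is that any rigidity obtained from subgraph densities tends to propagate constraints onto $W_F$ itself, which one cannot afford since $W_F$ is arbitrary. The resolution is to engineer the template so that the block $J_1\times J_1$ is the unique block left unconstrained by the universal identities, while its interaction with the template is nevertheless rigid enough that $d_1,\ldots,d_\ell$ together with the forced template determine it uniquely; this is the technical heart of the construction of Cooper et al.\ and provides the scaffolding on which the parameterised family of graphons in the present paper is built.
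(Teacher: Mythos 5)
This statement is not proved in the paper at all: Theorem~\ref{thm-universal} is quoted as a black box from Cooper, Kr\'al' and Martins~\cite{bib-universal}, and the present paper only re-uses (and slightly adapts, in Subsection~\ref{sub-blackbox}) the decorated constraints of that construction. Your outline is broadly the same \emph{style} of construction as theirs --- a partitioned graphon in which all tiles except the one carrying $W_F$ form a rigid template, a coordinate system read off from degrees into a reference part, and finite forcibility via decorated constraints translated into ordinary density identities --- so as a description of the intended strategy it is reasonable (the exact part structure differs: the actual $W_0$ has ten parts, nine of measure $1/13$ and one larger part, with $W_F$ sitting on the tile $G\times G$, but that is cosmetic).

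However, two points prevent this from being a proof. First, your second step is wrong as stated: a graphon is an equivalence class of measurable functions, and its values on a countable dense set of pairs are neither well defined nor sufficient --- prescribing a measurable function on a set of measure zero says nothing about it almost everywhere. The countable data that does determine $W_F$ a.e.\ (and is what the actual construction encodes) is the family of its densities on dyadic squares, recovered via Lebesgue/martingale convergence; this is precisely the quantity the present paper manipulates in Proposition~\ref{prop-change}. Relatedly, ``packing a sequence of bits into a single density'' and then decoding digits by subgraph-density constraints is not something flag-algebra-type identities can do directly; the known constructions instead encode each dyadic density as a geometric feature of a tile and equate it, by a decorated constraint, with the integral of the $W_F$-block over the corresponding dyadic box. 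Second, the part you yourself identify as the crux --- exhibiting one fixed finite family of graphs whose densities simultaneously force the template, the coordinate system, and the coupling between the encoded dyadic data and the $W_F$-block, uniformly in the arbitrary $W_F$, without over-constraining that block --- is exactly what you defer to ``the technical heart of the construction of Cooper et al.'' Since that is the entire content of the theorem, the proposal is an outline of the cited proof rather than an argument establishing it.
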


\begin{figure}
\begin{center}
\epsfxsize 9cm
\epsfbox{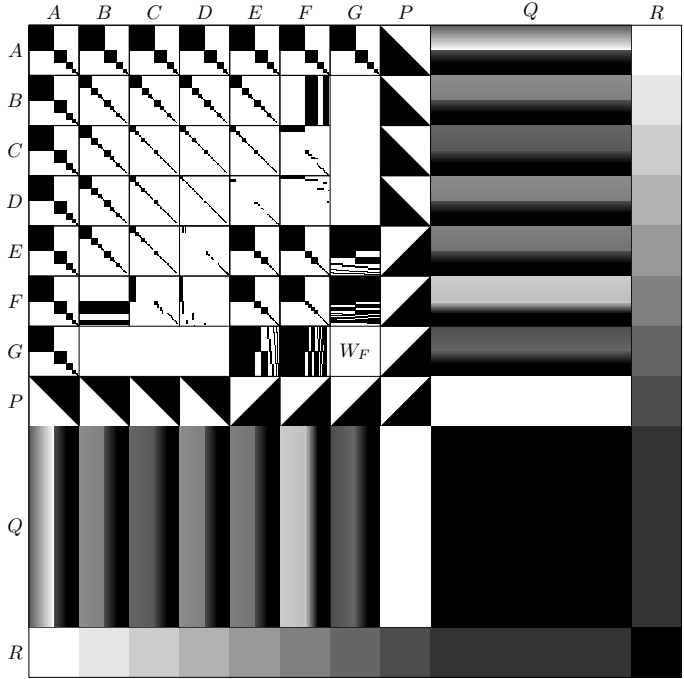}
\end{center}
\caption{The graphon $W_0$ from Theorem~\ref{thm-universal}.}
\label{fig-universal}
\end{figure}

The graphon $W_0$ from Theorem~\ref{thm-universal} is visualized in Figure~\ref{fig-universal}.
We will revisit the proof of Theorem~\ref{thm-universal} in Subsection~\ref{sub-blackbox} and
additional properties of the graphons $W_0$, which we state in the next proposition,
will be needed in Section~\ref{sec-analyze}.
A {\em dyadic square} is a square of the form
\mbox{$[(i-1)/2^k,i/2^k)\times [(j-1)/2^k,j/2^k)$}, where $k\in\NN_0$ and $i,j\in [2^k]$.
Recall that the \emph{standard binary representation} of a real number
is the binary representation such that set of digits equal to zero is not finite.

\begin{proposition}
\label{prop-change}
For every $\eps>0$, there exists $k\in\NN$ with the following property.
Let $W_F$ and $W'_F$ be any two graphons such that the densities of their dyadic squares of sizes at least $2^{-k}$
agree up to the first $k$ bits after the decimal point in the standard binary representation, and
let $W_0$ and $W'_0$ be the corresponding graphons from Theorem~\ref{thm-universal} that
contain $W_F$ and $W'_F$, respectively.
The $L_1$-distance between the graphons $W_0$ and $W'_0$ viewed as functions in $L^1([0,1)^2)$
is at most $\eps$.
\end{proposition}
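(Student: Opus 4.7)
The plan is to exploit the fact that, in the construction of Cooper et al.\ used in Theorem~\ref{thm-universal}, the only way in which $W_F$ enters $W_0$ is (i) as a copy of itself occupying a $1/13$-fraction of the vertices, and (ii) through a digit-by-digit encoding of certain dyadic densities of $W_F$ that is stored in auxiliary parts of $W_0$. Once this is isolated, the proposition follows from a direct stability/approximation argument together with the triangle inequality in $L^1([0,1)^2)$.

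First I would unpack the construction of $W_0$ from~\cite{bib-universal} to make the dependence on $W_F$ explicit. Outside of the block carrying the isometric copy of $W_F$, every value of $W_0(x,y)$ is either a fixed constant independent of $W_F$, or else belongs to a cell of a dyadic partition whose value is determined by a single bit of the binary expansion of the density of $W_F$ on some dyadic square of side at least $2^{-m}$, for some $m$. Concretely, for each depth $m$ I would identify the family of cells of $W_0$ which collectively encode the first $m$ bits of the densities $\{d(D) : D \text{ dyadic of side } 2^{-m}\}$, and bound the total Lebesgue measure of the cells contributing to depths $>k$ by a quantity $\delta(k)\to 0$ as $k\to\infty$.

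Next I would prove the following Lipschitz-type statement: if $W_F$ and $W'_F$ agree on the first $k$ binary digits of the densities of every dyadic square of side at least $2^{-k}$, then $W_0$ and $W'_0$ agree pointwise outside of (a) the $W_F$-block, and (b) the depth-$>k$ encoding cells identified above. The $W_F$-block contributes to the $L^1$-distance at most $(1/13)^2\cdot \|W_F - W'_F\|_\infty \le 1/169$ in the worst case, but since we can freely enlarge $k$, we control it via the dyadic-agreement hypothesis applied at the finest scale: averaging $W_F$ and $W'_F$ on dyadic squares of side $2^{-k}$ (and comparing the two step approximations bit by bit) shows that the two $W_F$-blocks differ in $L^1$ by at most $2^{-k}+2\cdot 2^{-k}$. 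The encoding region of depth $>k$ contributes at most $\delta(k)$. Choosing $k$ large enough that $2^{-k}\cdot\mathrm{const} + \delta(k) \le \eps$ yields the claim.

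The main obstacle is the bookkeeping in the first step: the construction of~\cite{bib-universal} is recursive (visible in Figure~\ref{fig-universal}, where a Turing-machine-like hierarchy simulates the encoding of arbitrary graphons), and a priori a change in one bit of $W_F$'s encoding could propagate through many layers of the simulation. The technical heart of the argument is therefore to verify that the encoding is \emph{local} in the following sense: changing one encoded bit alters $W_0$ only on a region whose Lebesgue measure is explicitly bounded, with the bounds summable across scales. Once this locality and the summability of $\delta(k)$ are established, the triangle inequality argument outlined above produces the desired $\eps$-bound with a value of $k$ depending only on $\eps$.
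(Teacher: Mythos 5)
There is a genuine gap, and it sits exactly at the step where you dispose of the $W_F$-block. You claim that the hypothesis (agreement of the densities of dyadic squares of side at least $2^{-k}$ to $k$ bits) forces the two copies of $W_F$ and $W'_F$ inside $W_0$ and $W'_0$ to differ in $L^1$ by $O(2^{-k})$, "by averaging on dyadic squares of side $2^{-k}$ and comparing the two step approximations". This inference is false: the hypothesis controls only the distance between the \emph{step averages} of $W_F$ and $W'_F$ at scale $2^{-k}$, not the distance between the graphons themselves. For instance, take $W_F\equiv 1/2$ and $W'_F$ a balanced $0/1$ checkerboard on squares of side $2^{-k-1}$; every dyadic square of side at least $2^{-k}$ has density exactly $1/2$ in both, so the hypothesis holds with all bits agreeing, yet $\|W_F-W'_F\|_{L^1}=1/2$. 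Since you (correctly) take the construction of Theorem~\ref{thm-universal} to place a literal rescaled copy of $W_F$ in one tile of $W_0$, this example shows that no bookkeeping of the encoding cells can close the argument as you have set it up: the copy-block alone contributes a constant to $\|W_0-W'_0\|_{L^1}$ that the dyadic-agreement hypothesis cannot beat. Any correct argument must either bring in an additional $L^1$-closeness of $W_F$ and $W'_F$ themselves, or show that the canonical $W_0$ depends on $W_F$ only through the encoded digit data — which is incompatible with the literal-copy feature your own step (i) relies on.

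For comparison, the paper does not prove this proposition at all: it is quoted as a property of the construction of Cooper et al., and in its only application (Lemma~\ref{lm-add-P}) the authors have, in addition to the bit-agreement at coarse scales, the bound $\|W_F-W'_F\|_{L^1}\le 2^{-k_1}$ coming from the fact that one bounding sequence strengthens the other; it is this extra $L^1$-closeness that genuinely controls the copy-block, while the bit-agreement is what one needs for the digit-encoding portions of $W_0$. Your treatment of those encoding portions (isolating the cells carrying bits of depth greater than $k$ and bounding their total measure by some $\delta(k)\to 0$, while checking that a changed bit does not propagate) is the right kind of analysis for that part of the construction, and the locality concern you raise there is real work that would have to be done against the details of \cite{bib-universal}; but as written the proposal proves a statement whose copy-block step fails, so it does not establish the proposition.
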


We finish by presenting graph limit analogues of several standard graph theory notions.
Let $W$ be a graphon.
The {\em degree} of a vertex $x \in [0, 1)$ is defined as
$$\deg_W(x) = \int_{[0,1)} W(x,y) \dd y\,\mbox{.}$$
When it is clear from the context which graphon we are referring to, we will omit the subscript,
i.e., we will just write $\deg(x)$ instead of $\deg_W(x)$.
The {\em neighborhood} of $x$ is the set of all $y$ such that $W(x,y)>0$.
This set is denoted by $N_W(x)$, where we again drop the subscript if it is clear from the context.
Note that $\deg_W(x)\le |N_W(x)|$ and the inequality can be strict.
If $A$ is a non-null subset of $[0,1)$,
then the {\em relative degree} of a vertex $x\in [0,1)$ with respect to $A$ is
$$\deg_W^A(x) = \frac{\int_A W(x,y) \dd y}{|A|},$$
i.e., the degree of $x$ with respect to $A$ normalized by the measure of $A$.
Similarly, $N_W^A(x)=N_W(x)\cap A$ is the {\em relative neighborhood} of $x$ with respect to $A$.
As in the previous cases, we drop the subscripts if $W$ is clear from the context.

\subsection{Decorated constraints}

The arguments related to forcing the structure of graphons introduced in Section~\ref{sec-family}
are based on the method of decorated constraints from~\cite{bib-inf,bib-comp}.
This method uses the language of the flag algebra method of Razborov developed in~\cite{bib-razborov07},
which has had many significant applications in extremal combinatorics~\cite{bib-razborov-interim}.

We now give a formal definition of decorated constraints.
A {\em density expression} is a polynomial in graphs with real coefficients.
Formally, it can be defined recursively as follows.
A real number or a graph are density expressions, and
if $D_1$ and $D_2$ are density expressions, then $D_1 + D_2$ and $D_1 \cdot D_2$ are also density expressions.
An {\em ordinary density constraint} is an equality between two density expressions.
The value of a density expression for a graphon $W$
is obtained by substituting $\tau(H,W)$ for each graph $H$.
A graphon $W$ {\em satisfies} an ordinary density constraint
if the density expressions on the two sides have the same value for $W$.
Note that we obtain an equivalent notion if we use $d(H,W)$ instead of $\tau(H,W)$
when computing the value of a density of expression (adjusting the coefficients appropriately).

We next introduce a formally stronger type of density constraint: a rooted density constraint.
A \emph{rooted graph} is a graph $H$ with $m$ distinguished vertices,
which are labeled with the elements of $[m]$ and are referred to as the \emph{roots}.
Let $H_0$ be the subgraph of $H$ induced by the roots.
We then define $\tau(H,W|x_1,\ldots,x_m)$ to be the probability that
the $W$-random graph is $H$, conditioned on $x_i$ being associated with the $i$-th root.
It follows that 
\[\tau(H,W|x_1,\ldots,x_m)=
\int\limits_{[0,1)^{U}} \prod_{\{u,v\} \in E(H)} W(x_u,x_v)\mkern-9mu
                 \prod_{\{u,v\} \in \binom{V(H)}{2}\setminus E(H)}\mkern-9mu (1-W(x_u,x_v)) \dd x_{U}
,\]
where $U$ is the set of the non-root vertices of $H$.
Two rooted graphs are \emph{compatible}
if the subgraphs induced by their roots are isomorphic through an isomorphism preserving their labels.
A \emph{rooted density expression} is a density expression containing compatible rooted graphs.
We define $\tau(C,W|x_1,\ldots,x_m)$ to be the value of the expression $C$
when each rooted graph $H$ is substituted with $\tau(H,W|x_1,\ldots,x_m)$.
Let $C_1=C_2$ be a constraint where all rooted graphs are compatible.
We say that the graphon $W$ {\em satisfies} the constraint $C_1=C_2$
if 
\[\tau(C_1,W|x_1,\ldots,x_m)=\tau(C_2,W|x_1,\ldots,x_m)\]
holds for almost every $m$-tuple $x_1,\ldots,x_m\in [0,1)$.
Note that if the $m$-tuple $x_1,x_2,\ldots,x_m$ is chosen in such a way that
a $W$-random graph with the vertices associated with $x_1,\ldots,x_m$ is $H_0$ with probability zero,
then both sides of the above equality are equal to zero.

A graphon $W$ is said to be {\em partitioned}
if there exist an integer $k \in \NN$, positive reals $a_1,\ldots, a_k$ summing to one, and
distinct reals $d_1,\ldots,d_k\in [0, 1]$, such that for each $i \in [k]$, the set of vertices in $W$ with degree $d_i$ has measure $a_i$.
The set of all vertices with degree $d_i$ will be referred to as a {\em part};
the {\em size} of a part is its measure and its {\em degree} is the common degree of its vertices.
For example, the graphon depicted in Figure~\ref{fig-universal} has ten parts and all but $Q$ have the same size.
If $X$ and $Y$ are parts, then we will refer to the restriction of $W$ to $X\times Y$ as to the {\em tile} $X\times Y$.
The following lemma was proven in~\cite{bib-inf,bib-comp}.

\begin{lemma}
\label{lm-partition}
Let $a_1,\ldots,a_k$ be positive real numbers such that $a_1+\ldots+a_k=1$ and let $d_1,\ldots,d_k\in [0,1]$ be distinct reals.
There exists a finite set of ordinary density constraints $\CC$ such that a
graphon satisfies all constraints in $\CC$ if and only if it is a partitioned graphon with parts of sizes $a_1,\ldots,a_k$ and degrees $d_1,\ldots,d_k$.
\end{lemma}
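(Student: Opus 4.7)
The plan is to control the degree distribution of $W$ through its moments, and to observe that these moments are expressible as densities of stars. Concretely, for each integer $j \ge 0$, the $j$-th moment of the degree function satisfies
$$\int_{[0,1)} \deg_W(x)^j \dd x = t(K_{1,j}, W),$$
where $K_{1,j}$ is the star with $j$ leaves and $t(\cdot,\cdot)$ is the homomorphism density. Since $t(K_{1,j}, W)$ is a fixed linear combination of induced densities $\tau(H, W)$ over labeled graphs $H$ on $j+1$ vertices that contain $K_{1,j}$ as a subgraph, each degree moment of $W$ is, in the formal sense of the paper, an ordinary density expression.

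The construction of $\CC$ then consists of two families of constraints. To force the degrees into the prescribed set, let $p(t) = \prod_{i=1}^k (t - d_i)$ and write $p(t)^2 = \sum_{j=0}^{2k} c_j\, t^j$. I would impose the single constraint
$$\sum_{j=0}^{2k} c_j\, t(K_{1,j}, W) = 0,$$
which is equivalent to $\int_{[0,1)} p(\deg_W(x))^2 \dd x = 0$. Since the integrand is nonnegative, this forces $\deg_W(x) \in \{d_1,\ldots,d_k\}$ for almost every $x \in [0,1)$. To pin down the part sizes, for each $i \in [k]$ let $q_i(t) = \prod_{j \neq i}(t-d_j)/(d_i-d_j)$ be the Lagrange interpolant satisfying $q_i(d_j)=\delta_{ij}$, and impose
$$\int_{[0,1)} q_i(\deg_W(x)) \dd x = a_i.$$
Expanding $q_i$ as a polynomial in $t$ again yields a linear combination of star homomorphism densities $t(K_{1,j}, W)$, hence a legitimate ordinary density constraint. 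Once the first constraint holds, $q_i(\deg_W(x))$ coincides almost everywhere with the indicator of the set of vertices of degree $d_i$, so the integral computes exactly the measure of that set.

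Both directions of the equivalence are then immediate. If $W$ is partitioned with parts of sizes $a_1,\ldots,a_k$ and degrees $d_1,\ldots,d_k$, then $p(d_i)=0$ for every $i$ and $\sum_j q_i(d_j)\, a_j = a_i$, so all constraints are satisfied. Conversely, any $W$ satisfying $\CC$ has its degree function a.e.\ valued in $\{d_1,\ldots,d_k\}$ with the measure of $\{x : \deg_W(x) = d_i\}$ equal to $a_i$, which is precisely the definition of a partitioned graphon with the prescribed sizes and degrees. No step is genuinely hard; the one subtlety is the bookkeeping translation between the integral expressions in $\deg_W$ and the polynomial-in-$\tau$ density expressions required by the definition, which is handled uniformly by the identity $\int \deg_W(x)^j \dd x = t(K_{1,j},W)$ together with the standard inclusion-exclusion expansion of $t$ into induced densities $\tau$.
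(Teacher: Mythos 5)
Your proof is correct and follows essentially the same route as the argument in the references the paper cites for this lemma: express the degree moments $\int_{[0,1)}\deg_W(x)^j\,\dd x$ as star densities (hence as ordinary density expressions), force $\int_{[0,1)}\prod_{i=1}^k(\deg_W(x)-d_i)^2\,\dd x=0$ to restrict the degrees to $\{d_1,\ldots,d_k\}$, and then fix the part sizes by finitely many further constraints on these moments. Your use of Lagrange interpolants instead of the first $k$ moments (a Vandermonde argument) to pin down the sizes is only a cosmetic variation of the same idea.
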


We next introduce a formally even stronger type of density constraint.
Fix $a_1,\ldots$, $a_k$ and $d_1,\ldots,d_k$ as in Lemma~\ref{lm-partition}.
A {\em decorated} graph is a graph $H$ with $m\le |H|$ distinguished vertices,
which are called {\em roots} and labeled with $[m]$, and
with each vertex (distinguished or not) assigned one of the $k$ parts, which will be called the {\em decoration} of a vertex.
We allow $m=0$, i.e., a decorated graph can have no roots.
Suppose that $x_1,\ldots,x_m \in [0,1)$ are given such that each $x_i$ belongs to the part that the $i$-th root is decorated with.
We then define $\tau(H,W|x_1,\ldots,x_m)$
to be the probability that a $W$-random graph is $H$, conditioned on the $i$-th root being associated with $x_i$, and
conditioned on each non-root vertex being associated with a point from the part that it is decorated with.
Two decorated graphs are {\em compatible} if the subgraphs induced by their roots are isomorphic
through an isomorphism preserving the labels of the roots and the decorations of all vertices. 
A \emph{decorated density expression} $C$ is an expression that contains compatible decorated graphs only, and
we define $\tau(C,W|x_1,\ldots,x_m)$ in the obvious way.
A {\em decorated density constraint} is an equality between two density expressions that
contain compatible decorated graphs instead of ordinary graphs;
we will often say \emph{decorated constraint} instead of decorated density constraint.

Before defining when a decorated constraint is satisfied,
let us introduce a way of visualizing decorated graphs and decorated constraints,
which was also used in~\cite{bib-reg,bib-universal,bib-inf}.
We will draw decorated graphs as graphs with each vertex labeled by its decoration;
the root vertices will be drawn as squares and non-root vertices as circles.
The decorated constraint will be presented as an expression containing decorated graphs,
where the roots of all the graphs appearing in the constraint will be presented in the same position
in all the graphs of the constraint; this establishes the correspondence between the roots
in individual decorated graphs.
A solid line between two vertices will represent an edge and a dashed line a non-edge.
The absence of a line between two root vertices indicates that the decorated constraint
should hold for both the root graph containing this edge and the one not containing it.
The absence of a line between a non-root vertex and another vertex represents the sum over all decorated graphs
with and without this edge.
In particular,
if there are $k$ such lines missing, the drawing represents the sum of $2^k$ decorated graphs.

We next define when a graphon $W$ satisfies a decorated constraint;
the definition is followed by an example of evaluating a decorated graph. 
Suppose that $W$ is a partitioned graphon and $C_1=C_2$ is a decorated constraint such that
each decorated graph in the constraint has (the same) $m$ roots.
The graphon $W$ {\em satisfies} the constraint $C$ if 
\[\tau(C_1,W|x_1,\ldots,x_m)=\tau(C_2,W|x_1,\ldots,x_m)\]
for almost every $m$-tuple $x_1,\ldots,x_m\in [0,1)$ such that
$x_i$ belongs to the part that the $i$-th root is decorated with.

To illustrate this definition, we give an example.
Consider the partitioned graphon $W$ that has two parts $A$ and $B$, each of size $1/2$, and
that is equal to $2/3$, $1/3$ and $1$ on the tiles $A\times A$, $A\times B$ and $B\times B$, respectively.
The graphon is depicted in Figure~\ref{fig-ex-dec};
the three decorated graphs depicted in Figure~\ref{fig-ex-dec}
are evaluated to $8/27$, $4/9$ and $16/243$ (from left to right) for any admissible choices of the roots.

\begin{figure}
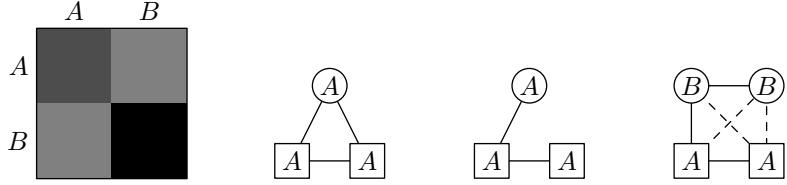

\begin{center}
\epsfbox{exturing-4.mps}
\hskip 10mm
\epsfbox{exturing-5.mps}
\hskip 10mm
\epsfbox{exturing-6.mps}
\hskip 10mm
\epsfbox{exturing-7.mps}
\end{center}
\caption{An example of evaluating decorated constraints. The depicted partitioned graphon has two parts $A$ and $B$, each of size $1/2$, and
is equal to $2/3$, $1/3$ and $1$ on the tiles $A\times A$, $A\times B$ and $B\times B$, respectively.
The three decorated graphs that are depicted are evaluated to $8/27$, $4/9$ and $16/243$ (from left to right) for any admissible choices of the roots.}
\label{fig-ex-dec}
\end{figure}

In~\cite{bib-comp}, it was shown that the expressive power of ordinary and decorated density constraints is the same. To be precise, the following was proven.

\begin{lemma}
\label{lm-decorated}
Let $k\in\NN$, let $a_1,\ldots,a_k$ be positive real numbers summing to one, and
let $d_1,\ldots,d_k$ be distinct reals between zero and one.
For every decorated density constraint $C$, there exists an ordinary density constraint $C'$ such that
every partitioned graphon $W$ with parts of sizes $a_1,\ldots,a_k$ and degrees $d_1,\ldots,d_k$
satisfies $C$ if and only if it satisfies $C'$.
\end{lemma}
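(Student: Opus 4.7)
The plan is to reduce a decorated constraint to an ordinary one in two stages: first eliminate the roots, obtaining an unrooted decorated constraint, and then eliminate the decorations, obtaining an ordinary density constraint. Throughout, one repeatedly uses the flag algebra product identity and standard manipulations of density expressions.

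For the first stage, suppose the constraint is $C_1 = C_2$ with $m$ roots decorated by parts $A_{j_1},\dots,A_{j_m}$. Satisfaction is the a.e.\ equality of the two conditional densities on $A_{j_1}\times\cdots\times A_{j_m}$, which is equivalent to the single equation
\[\int_{A_{j_1}\times\cdots\times A_{j_m}} \bigl(\tau(C_1,W\mid\vec{x})-\tau(C_2,W\mid\vec{x})\bigr)^2\,\mathrm{d}\vec{x}=0.\]
Expanding the square, each crossed term $\tau(H_a,W\mid\vec x)\,\tau(H_b,W\mid\vec x)$ with $H_a,H_b$ decorated and sharing roots equals, by the standard Razborov product formula extended to decorations, a sum $\sum_{H}\tau(H,W\mid\vec x)$ over all gluings of $H_a$ and $H_b$ along their common root subgraph (decorations of non-roots being preserved). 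Integrating the surviving terms $\tau(H,W\mid\vec x)$ over $\vec x\in\prod_iA_{j_i}$ produces $\prod_i a_{j_i}\cdot\tau(H',W)$, where $H'$ is the unrooted decorated graph obtained from $H$ by forgetting the root labels. This yields a polynomial identity $\wcJ=0$ in unrooted decorated densities equivalent to the original constraint.

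For the second stage, one uses that on a partitioned graphon with distinct degrees $d_1,\dots,d_k$, Lagrange interpolation gives polynomials $p_i(t)$ with $p_i(d_j)=\delta_{ij}$, and hence $\mathbf{1}_{A_i}(x)=p_i(\deg_W(x))$ almost everywhere. For any unrooted decorated graph $H$ with decorations $i_v$ and multiplicities $n_i$,
\[\tau(H,W)=\frac{1}{\prod_i a_i^{n_i}}\int_{[0,1)^{V(H)}}\prod_{v\in V(H)} p_{i_v}\!\left(\int_{[0,1)} W(x_v,y_v)\,\mathrm{d}y_v\right)\prod_{uv\in E(H)} W(x_u,x_v)\prod_{uv\notin E(H)}\bigl(1-W(x_u,x_v)\bigr)\,\mathrm{d}x_{V(H)}.\]
Each $p_{i_v}(\deg_W(x_v))$ is a polynomial in the pendant-edge integrals $\int W(x_v,y_v)\,\mathrm{d}y_v$, so fully expanding and interchanging sums and products (by Fubini) rewrites $\tau(H,W)$ as a real polynomial in $\tau(G,W)$ for certain unrooted, undecorated graphs $G$—namely those obtained from $H$ by attaching the appropriate pendant vertices and summing over edge/non-edge patterns. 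Applying this to every decorated graph appearing in $\wcJ$ converts the unrooted decorated constraint into an ordinary density constraint $C'$.

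The main technical obstacle is verifying the flag-algebra product identity for decorated graphs in Stage 1 and the degree-polynomial encoding in Stage 2; both are straightforward given that decorations are preserved under gluing, and that the distinctness of $d_1,\dots,d_k$ (which is precisely the hypothesis) makes the Lagrange interpolation valid. Since both substitutions are exact on any partitioned graphon with the prescribed parts and degrees, the resulting ordinary density constraint $C'$ is equivalent to $C$ on exactly this class of graphons, as required.
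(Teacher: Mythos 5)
Your Stage 2 (removing decorations via Lagrange interpolation in the degree, realized by pendant edges and inclusion--exclusion over the added pairs) is sound and is the standard device; the distinctness of $d_1,\ldots,d_k$ is used exactly there. The genuine gap is in Stage 1. With this paper's definition, $\tau(H,W\mid x_1,\ldots,x_m)$ includes the $W$/$(1-W)$ factors on the root--root pairs, so for two compatible rooted decorated graphs $H_a,H_b$ one actually has $\tau(H_a,W\mid\vec x)\,\tau(H_b,W\mid\vec x)=f_0(\vec x)\sum_H\tau(H,W\mid\vec x)$, where $f_0(\vec x)$ is the product of the root--root factors and the sum runs over the gluings: those factors appear squared on the left but only once on the right, so the product identity you invoke is false in this normalization. (It is exact for densities conditioned on the type, i.e.\ with root--root factors omitted, but then your next step $\int_{\prod_i A_{j_i}}\tau(H,W\mid\vec x)\,d\vec x=\prod_i a_{j_i}\,\tau(H',W)$ fails instead, since that integration needs the weight $f_0$.) The honest expansion of $\int_{\prod_i A_{j_i}}\bigl(\tau(C_1,W\mid\vec x)-\tau(C_2,W\mid\vec x)\bigr)^2\,d\vec x$ therefore contains terms such as $\int_{A\times B}W(x,y)^2\,dx\,dy$, which are multigraph rather than simple-graph densities and cannot be absorbed into an ordinary density expression.

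That this is not cosmetic is shown by a two-root example: let $C_1$ be the decorated graph consisting of two adjacent roots decorated $A$ and $B$ (no non-root vertices) and $C_2=1/2$; the constraint asserts $W=1/2$ a.e.\ on $A\times B$ and is satisfiable by partitioned graphons with the prescribed sizes and degrees. Your recipe replaces $\tau(C_1,W\mid\vec x)^2=W(x_1,x_2)^2$ by its unique gluing, namely $W(x_1,x_2)$, and the integrated identity collapses to $|A|\,|B|/4=0$, which no graphon satisfies, so the ordinary constraint you produce is not equivalent to $C$. This root-elimination step is precisely the delicate part of the lemma, which the present paper does not reprove but imports from~\cite{bib-comp}; a correct treatment removes roots by a different device, e.g.\ comparing two independent copies of the root tuple and using Cauchy--Schwarz-type arguments in the spirit of Lemma~\ref{lm-typ-pairs} (the same $C_4$-style trick the paper itself uses in Section~\ref{sec-family} to force pointwise values on tiles), rather than pointwise squaring followed by gluing.
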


Lemmas~\ref{lm-partition} and~\ref{lm-decorated} were used in~\cite{bib-reg,bib-universal,bib-inf,bib-comp}
to argue that certain partitioned graphons are finitely forcible in the following way.
Suppose that $W$ is the unique partitioned graphon (up to weak isomorphism) that satisfies a finite set $\CC$ of decorated constraints.
By Lemmas~\ref{lm-partition} and~\ref{lm-decorated}, the graphon $W$ is the unique graphon that satisfies
a finite set $\CC'$ of ordinary decorated constraints. It follows that the graphon $W$ is the unique graphon such that
the density of each subgraph $H$ appearing in a constraint of $\CC'$ is equal to $d(H,W)$.
In particular, $W$ is finitely forcible.
In this paper, we follow a similar line of arguments to show that a certain family of graphons
can be forced by finitely many subgraph densities.

We next state two auxiliary lemmas,
which can be found explicitly stated in~\cite[Lemmas 7 and 8]{bib-reg}.
The first lemma says that if a graphon $W_0$ is finitely forcible,
then its structure may be forced on a part of a partitioned graphon.
The second lemma is implicit in~\cite[proof of Lemma 2.7 or Lemma 3.3]{bib-lovasz11+}.

\begin{lemma}
\label{lm-subforcing}
Let $k$ be an integer, $a_1,\ldots,a_k$ positive reals summing to one, and
$d_1,\ldots,d_k$ distinct reals between zero and one.
For every finitely forcible graphon $W_0$ and every $m\in [k]$,
then there exists a finite set $\CC$ of decorated constraints such that
any partitioned graphon $W$ with parts $A_1,\ldots,A_k$ of sizes $a_1,\ldots,a_k$ and degrees $d_1,\ldots,d_k$ satisfies $\CC$
if and only if the tile $A_m\times A_m$ is weakly isomorphic to $W_0$,
i.e., there exist measure preserving maps $\varphi_0:[0,1)\to [0,1)$ and $\varphi_m:[0,a_m)\to A_m$ such that
$W(\varphi_m(xa_m),\varphi_m(ya_m))=W_0(\varphi_0(x),\varphi_0(y))$ for almost every $(x,y)\in [0,1)^2$.
\end{lemma}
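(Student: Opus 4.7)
The plan is to encode the finite forcing of $W_0$ directly as decorated density constraints localized inside the tile $A_m\times A_m$. Let $H_1,\ldots,H_\ell$ be a finite family of graphs witnessing that $W_0$ is finitely forcible, so that any graphon $W'$ satisfying $d(H_i,W')=d(H_i,W_0)$ for every $i\in[\ell]$ is weakly isomorphic to $W_0$. For each $i$, let $\widetilde{H}_i$ be the decorated graph obtained from $H_i$ by declaring no vertex to be a root and decorating every vertex with the part $A_m$. Take $\CC$ to be the $\ell$ decorated constraints
\[
\widetilde{H}_i \;=\; \tau(H_i,W_0), \qquad i=1,\ldots,\ell,
\]
where each right-hand side is the real constant $\tau(H_i,W_0)$.

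Given a partitioned graphon $W$ with parts $A_1,\ldots,A_k$ of the prescribed sizes and degrees, fix a measure-preserving bijection $\psi\colon[0,1)\to A_m$ that rescales Lebesgue measure by the factor $a_m$, and set $W_m(x,y)=W(\psi(x),\psi(y))$. Sampling points independently and uniformly from $A_m$ and evaluating $W$ is, via $\psi$, equivalent to sampling from $[0,1)$ and evaluating $W_m$; unfolding the definition of $\tau$ for a decorated graph with no roots whose every vertex is decorated with $A_m$ therefore gives $\tau(\widetilde{H}_i,W)=\tau(H_i,W_m)$. Hence $W$ satisfies $\CC$ if and only if $\tau(H_i,W_m)=\tau(H_i,W_0)$ for every $i$, which via the normalization $d(H,\cdot)=(|H|!/|\Aut(H)|)\cdot\tau(H,\cdot)$ is equivalent to $d(H_i,W_m)=d(H_i,W_0)$ for every $i$, and by the finite forcibility of $W_0$ this is in turn equivalent to $W_m$ being weakly isomorphic to $W_0$.

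Finally, to convert weak isomorphism of $W_m$ and $W_0$ into the specific form of measure-preserving maps required by the lemma, we apply the Borgs--Chayes--Lov\'asz characterization: there exist a graphon $U$ and measure-preserving maps $\alpha,\beta\colon[0,1)\to[0,1)$ with $W_m(\alpha(x),\alpha(y))=U(x,y)=W_0(\beta(x),\beta(y))$ almost everywhere. Setting $\varphi_0=\beta$ and $\varphi_m(y)=\psi(\alpha(y/a_m))$ on $[0,a_m)$ yields measure-preserving maps (as a routine check using the scaling factor of $\psi$ confirms) for which
\[
W\bigl(\varphi_m(xa_m),\varphi_m(ya_m)\bigr) = W_m(\alpha(x),\alpha(y)) = W_0(\varphi_0(x),\varphi_0(y))
\]
almost everywhere. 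There is no serious obstacle in this argument; it is a direct translation of finite forcibility through a change of variables, and the only mildly delicate point is the bookkeeping needed to invoke Borgs--Chayes--Lov\'asz in order to produce measure-preserving maps of the exact form demanded by the statement.
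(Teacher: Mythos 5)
Your proposal is correct and is essentially the standard argument: the paper itself does not prove Lemma~\ref{lm-subforcing} but cites it from~\cite{bib-reg}, where the proof is exactly this---decorate every vertex of each graph in a forcing family for $W_0$ with the part $A_m$ (no roots) and equate the resulting densities with the corresponding densities in $W_0$, then invoke finite forcibility and the Borgs--Chayes--Lov\'asz characterization to pass to the measure-preserving maps in the statement. The only implicit point, the existence of a measure-preserving identification of $[0,1)$ (or $[0,a_m)$) with the part $A_m$, is standard for atomless standard probability spaces and is already presupposed by the formulation of the lemma.
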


\begin{lemma}
\label{lm-typ-pairs}
Let $F:X\times Z\to [0,1]$ be a measurable function, $X,Z\subseteq [0,1)$, such that
$$\int_Z F(x,z)F(x',z) \dd z = \xi$$
for almost every $(x,x')\in X^2$.
Then, it holds that
$$\int_Z F(x,z)^2 \dd z = \xi$$
for almost every $x\in X$.
\end{lemma}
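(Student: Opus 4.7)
My plan is to introduce the ``mean function'' $h \in L^2(Z)$ defined by $h(z) := \frac{1}{|X|}\int_X F(x,z)\,\dd x$, and to argue that $F(x,\cdot) = h$ in $L^2(Z)$ for almost every $x \in X$; once this is established, the conclusion follows immediately from the Fubini computation $\|h\|_{L^2(Z)}^2 = \frac{1}{|X|^2}\int_X\!\int_X\int_Z F(x,z)F(x',z)\,\dd z\,\dd x\,\dd x' = \xi$. Writing $f_x := F(x,\cdot) \in L^2(Z)$ and setting $u_x := f_x - h$, I would first use Fubini's theorem a second time to check that $\langle f_x, h\rangle = \xi$ for almost every $x$; expanding $\langle u_x, u_{x'}\rangle = \langle f_x, f_{x'}\rangle - \langle f_x, h\rangle - \langle h, f_{x'}\rangle + \|h\|^2$ then yields $\langle u_x, u_{x'}\rangle = 0$ for almost every pair $(x,x')\in X^2$.

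The main obstacle is to upgrade this pairwise orthogonality to $u_x = 0$ in $L^2(Z)$ for almost every $x$: one cannot do so directly, because the diagonal $\{x = x'\}$ in $X^2$ has measure zero and is therefore not seen by the hypothesis. To circumvent this, I plan to square the orthogonality identity and apply Fubini once more, giving
\[
0 = \int_X\!\int_X \langle u_x, u_{x'}\rangle^2\,\dd x\,\dd x' = \int_Z\!\int_Z K(z,z')^2\,\dd z\,\dd z',
\]
where $K(z,z') := \int_X u_x(z)\,u_x(z')\,\dd x$ is the covariance kernel of the family $\{u_x\}_{x\in X}$. Hence $K = 0$ as an element of $L^2(Z^2)$.

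To finish, since $Z \subseteq [0,1)$ the space $L^2(Z)$ is separable, so I can fix an orthonormal basis $(e_i)_{i \in \NN}$ and set $c_i(x) := \langle u_x, e_i\rangle$. Integrating $K$ against $e_i(z)\,e_j(z')$ yields $\int_X c_i(x) c_j(x)\,\dd x = 0$ for all $i,j\in\NN$; in particular each $c_i$ vanishes almost everywhere on $X$, and a countable union of null sets then gives $u_x = 0$ in $L^2(Z)$ for almost every $x$. This delivers $f_x = h$ almost everywhere, hence $\int_Z F(x,z)^2\,\dd z = \|h\|^2 = \xi$, as required.
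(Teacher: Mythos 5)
Your argument is correct: the identities $\|h\|_{L^2(Z)}^2=\xi$ and $\langle F(x,\cdot),h\rangle=\xi$ for almost every $x$ follow from Fubini (everything is bounded by $1$ on sets of finite measure, and the measurability of $x\mapsto\langle u_x,e_i\rangle$ needed later is also a Fubini consequence), the passage to the covariance kernel $K$ and the conclusion $K=0$ in $L^2(Z^2)$ are sound, and the orthonormal-basis step correctly converts this into $u_x=0$ in $L^2(Z)$ for almost every $x$ — this is exactly the right device for getting around the fact that the hypothesis says nothing about the null diagonal $\{x=x'\}$. (The only, entirely cosmetic, caveat is the degenerate case $|X|=0$, where the conclusion is vacuous but your normalization by $|X|$ is undefined.) Note, however, that the paper does not prove this lemma at all: it is quoted from earlier work (Lemma 8 of the weak-regularity paper of Cooper, Kaiser, Kr\'al' and Noel, and it is implicit in Lov\'asz and Szegedy's finite forcibility paper), where the usual argument is different in flavor: one regards $x\mapsto F(x,\cdot)$ as a (Bochner) measurable map into $L^2(Z)$ and exploits almost-everywhere approximation by nearby rows (a Lebesgue-point/density argument), so that $\int_Z F(x,z)^2\dd z$ is recovered as a limit of the constant inner products $\int_Z F(x,z)F(x',z)\dd z$ with typical $x'$ close to $x$. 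Your route proves slightly more — that almost every row equals the mean row $h$ in $L^2(Z)$ — using only Fubini and separability of $L^2(Z)$, at the cost of the extra bookkeeping with $K$ and the basis; the density-point route is shorter if one is willing to invoke vector-valued Lebesgue differentiation. Either way the lemma holds, so your proposal is a valid self-contained substitute for the citation.
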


\section{General setup}
\label{sec-gensetup}

The proof of our main theorem, Theorem~\ref{thm-main},
builds on techniques developed in relation to the finite forcibility of graphons.
We find a finite set of density constraints with the property that
any graphon that satisfies these constraints has most of its structure forced,
except for a small part that can vary in a controlled manner.
In particular, we will construct a family of graphons with structure depending
on a bounding sequence, which is a notion that we define below, and
the graphons in the family are indexed by elements of a subset of $[0,1]^\NN$.

Before proceeding further, we need to introduce some definitions.
We first define a linear order $\preceq$ on finite multisets of natural numbers.
Each such multiset $A$ can be associated with a vector $\chi(A)\in\NN_0^\NN$,
where $\chi(A)_n$ is the multiplicity of the containment of $n$ in $A$. Let $\Sigma(A)$ be the sum of the elements of $A$, i.e.,
$$\Sigma(A)=\sum_{n\in\NN}n\cdot\chi(A)_n\,\mbox{.}$$
If $A$ and $B$ are two multisets of natural numbers, then we will say that $A\preceq B$
if either $A=B$, or $\Sigma(A)<\Sigma(B)$, or
$\Sigma(A)=\Sigma(B)$ and the first entry different in $\chi(A)$ and $\chi(B)$ is smaller in $\chi(A)$.
Let $M_i$ be the $i$-th multiset in this linear order,
i.e., $M_1=\emptyset$, $M_2=\{1\}$, $M_3=\{2\}$, $M_4=\{1,1\}$, etc.
Since there exists at least one multiset $M$ with $\Sigma(M)=n$ for each $n$,
we get the following observation.
\begin{observation}
\label{obs-monomialsadditive}
It holds that $\Sigma(M_i)\le i$ for every $i\in\NN$.
\end{observation}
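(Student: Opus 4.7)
The plan is a one-step counting argument that leverages the fact that the order $\preceq$ is primarily by $\Sigma$. Because $A \preceq B$ whenever $\Sigma(A) < \Sigma(B)$, every multiset with sum strictly smaller than $\Sigma(M_i)$ appears earlier than $M_i$ in the order. Hence if $k$ denotes $\Sigma(M_i)$, the total number of multisets $M$ with $\Sigma(M) \le k$ is at least $i$ (namely, $M_1,\ldots,M_i$ themselves).

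To turn this into the inequality $\Sigma(M_i)\le i$, I would exhibit, for each non-negative integer $n$, an explicit multiset with $\Sigma$-value equal to $n$: take $\emptyset$ when $n=0$ and the singleton $\{n\}$ when $n\ge 1$. This shows that for every $n\ge 0$ there exists at least one multiset with sum exactly $n$, and therefore the number of multisets with sum at most $n$ is at least $n+1$.

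Combining the two observations gives the result. Suppose for contradiction that $\Sigma(M_i) > i$, i.e.\ $\Sigma(M_i)\ge i+1$. Then every $M_j$ with $j\le i$ satisfies $\Sigma(M_j)\le\Sigma(M_i)-1$ only if $\Sigma(M_j)<\Sigma(M_i)$, but we just argued that $M_1,\ldots,M_i$ all satisfy $\Sigma(M_j)\le\Sigma(M_i)$. A cleaner phrasing: since there are at least $i$ distinct multisets with sum at most $i-1$ (one for each value $0,1,\ldots,i-1$, and these are all distinct), these $i$ multisets must occupy positions $1,\ldots,i'$ for some $i'\ge i$ in the order $\preceq$, so $\Sigma(M_i)\le i-1\le i$.

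There is no real obstacle here; the observation is essentially immediate once one notices that the ordering is dominated by $\Sigma$ and that $\Sigma$ takes every value in $\NN_0$. The only mildly delicate point is making sure the counting step uses distinct multisets for distinct sums, which is automatic from the choice of witnesses above.
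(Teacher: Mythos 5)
Your proof is correct and follows essentially the same route as the paper, whose entire justification is the remark that for every $n$ there exists at least one multiset $M$ with $\Sigma(M)=n$ (e.g.\ $\emptyset$ or $\{n\}$), so that the first $i$ positions of the order $\preceq$, which is primarily by $\Sigma$, are filled by multisets of sum at most $i-1\le i$. The only blemish is the garbled contradiction sentence in the middle, but your ``cleaner phrasing'' supersedes it and is sound.
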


We call a sequence $P=(p_i,l_i,u_i)_{i\in\NN}$ a \emph{bounding sequence} if
\begin{itemize}
        \item each $p_i$ is a polynomial and $p_i(z)\in [0,1]$ for all $z\in [0,1]^\NN$,
	\item the coefficient of $\pi_{i,j}$ in the polynomial $p_i$ at the monomial $z^{M_j}$ satisfies $|\pi_{i,j}|\le 2^{-2^j}/9$,
	\item all the partial derivatives of $p_i$ have values between $-1$ and $+1$ (inclusive) on $[0,1]^\NN$,
	\item $l_i\in [0,1]$, $u_i\in [0,1]$ and $l_i\le u_i$.
\end{itemize}
Each bounding sequence describes a subset of $[0,1]^\NN$,
which simply contains all $z\in [0,1]^\NN$ such that $p_i(z) \in [l_i,u_i]$ for each $i$, and
we will use such subsets to describe a subfamily graphons.
The second condition in the definition of a bounding sequence is needed,
in particular, in order to make sure we can encode the polynomial in the graphon.
We say that a bounding sequence $P'$ is a \emph{strengthening} of the bounding sequence $P$
if they agree on all triples except some triples where $(p_i,l_i,u_i)=(0,0,1)$ in $P$.
Note that if $P'$ is a strengthening of $P$,
then the subset of $[0,1]^\NN$ associated with $P'$ is contained in the subset associated with $P$.
If $P'$ is a strengthening of $P$ that agrees with $P$ on the first $k$ elements,
then we say that $P'$ is a \hbox{\emph{$k$-strengthening}}.

In Section~\ref{sec-family}, we will construct, for any bounding sequence $P$, a family $\WW_P$ of graphons $W_P(z)$. Each of the graphons $W_P(z)$ is described by
a bounding sequence $P=(p_i,l_i,u_i)_{i\in\NN}$ and
a vector $z\in [0,1]^{\NN}$ that satisfies $p_i(z) \in [l_i,u_i]$ for each $i$.
These families of graphons will have the following properties.

\begin{theorem}
\label{thm-WPforced}
There exists a finite set of graphs $H_1,H_2,\ldots,H_\ell$ and an integer $D$ such that the following holds.
For any bounding sequence $P=(p_i,l_i,u_i)_{i\in\NN}$,
there exists a polynomial $q$ of degree at most $D$ in $\ell$ variables such that
the following two statements are equivalent for any graphon $W$:
\begin{itemize}
\item The graphon $W$ is weakly isomorphic to a graphon $W_P(z)$ for some $z\in[0,1]^\NN$ such that
      $p_i(z)\in [l_i,u_i]$ for all $i\in\NN$.
\item The graphon $W$ satisfies $q(d(H_1,W),d(H_2,W),\ldots,d(H_\ell,W))=0$.
\end{itemize}
Furthermore, if $W_P(z)$ and $W_P(z')$ are weakly isomorphic, then $z=z'$.
\end{theorem}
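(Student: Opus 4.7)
The plan is to translate the defining properties of $\WW_P$ into a single polynomial equation in finitely many subgraph densities. The construction in Section~\ref{sec-family} will give each $W_P(z)$ a partitioned structure whose parameters (number of parts and their sizes and degrees) are independent of $P$ and $z$. The bounding sequence $P$ will be recorded as edge densities of specific measurable subsets of certain designated "scaffold" tiles, while the coordinates $z_i$ will be recorded similarly in further "variable" tiles. Consequently, all the shapes (decorated graphs) that enter the constraints defining $W\in\WW_P$ are universal; only the numeric values of these constraints depend on $P$.

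Concretely, I would proceed in four stages. \emph{Stage 1:} apply Lemma~\ref{lm-partition} to obtain a universal finite set of ordinary constraints forcing the partition structure. \emph{Stage 2:} use Lemma~\ref{lm-subforcing} together with Theorem~\ref{thm-universal} to pin down the scaffold tiles, with the $P$-encoding subgraphon $W_F$ living inside one of them. \emph{Stage 3:} introduce a finite family of decorated constraints that couple the variable tiles (encoding $z$), the scaffold tiles (encoding $P$), and auxiliary verification tiles so that the coupling is satisfied if and only if $p_i(z)\in[l_i,u_i]$ for every $i\in\NN$. \emph{Stage 4:} apply Lemma~\ref{lm-decorated} to convert every decorated constraint to an ordinary density constraint; let $H_1,\ldots,H_\ell$ be the (universal) set of subgraphs that appears, and set
\[q(y_1,\ldots,y_\ell) \;=\; \sum_F F(y_1,\ldots,y_\ell)^2,\]
summed over the finitely many polynomial relations produced. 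Because each relation has size bounded by a universal constant, $\deg q\le D$ for some universal $D$. For the concluding uniqueness statement, the coordinates $z_i$ are, by the design of Section~\ref{sec-family}, edge densities of specific subsets of designated tiles, and such densities are invariants under weak isomorphism; therefore $W_P(z)$ and $W_P(z')$ being weakly isomorphic forces $z=z'$.

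The main obstacle is Stage~3: arranging a \emph{finite} decorated verification scheme that enforces the infinite family of constraints $p_i(z)\in[l_i,u_i]$, $i\in\NN$, while keeping the underlying decorated graphs drawn from a $P$-independent set. This is precisely where the defining bound $|\pi_{i,j}|\le 2^{-2^j}/9$ is indispensable: its rapid decay will allow the whole sequence of polynomials $(p_i)_{i\in\NN}$ together with the bounds $(l_i,u_i)_{i\in\NN}$ to be encoded within a bounded number of tiles, for instance as binary expansions of edge densities inside a single encoding tile produced by Theorem~\ref{thm-universal}, so that a single uniform decorated constraint evaluates $p_i(z)$ against $[l_i,u_i]$ simultaneously for all $i$.
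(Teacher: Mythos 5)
Your overall architecture is the same as the paper's: force the partition (Lemma~\ref{lm-partition}), encode $P$ in an auxiliary graphon $W_F$ sitting inside the universal graphon of Theorem~\ref{thm-universal}, encode $z$ in a variable tile, couple the two by decorated constraints whose underlying graphs are independent of $P$, convert by Lemma~\ref{lm-decorated}, and take $q$ to be a sum of squares; the uniqueness claim is argued exactly as in the paper (the $z$-encoding tile is recoverable up to weak isomorphism from degrees and relative degrees, and it determines $z$). So the plan is aimed in the right direction. However, there is a genuine gap: your Stage~3, which you yourself flag as ``the main obstacle,'' is not a routine step to be waved at --- it is the bulk of the proof. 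One must actually exhibit a finite, $P$-independent list of decorated constraints and prove both directions of the equivalence. Concretely, the paper must (i) force the variable tile $C\times C$ to be block-diagonal with the value on the $i$-th block equal to the monomial $z^{M_i}$, which requires forcing a multiplicative recursion $\alpha_{k}=\alpha_{k'}\alpha_{k''}$ indexed by the multiset order (not merely ``recording $z_i$ as edge densities''); (ii) realize the evaluation of $p_i(z)$ as an integral of the coefficient-encoding columns of $W_F$ (where the hypothesis $|\pi_{i,j}|\le 2^{-2^j}/9$ enters, to make $9\cdot 2^{2j}\pi^{\pm}_{i,j}$ a legitimate graphon value) against those monomial blocks, producing a subset of the tile $C\times E$ of measure exactly $2^{-i}p_i(z)/3$; and (iii) force $l_i\le p_i(z)\le u_i$ by trapping that subset between sets of measures $2^{-i}l_i/3$ and $2^{-i}u_i/3$ cut out by the diagonal of $W_F$ through the tile $D_G\times E$. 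Your sketch (``binary expansions of edge densities in a single encoding tile, checked by a single uniform constraint'') names the right ingredient but does not supply this mechanism, and nothing in the proposal addresses the converse direction --- showing that an arbitrary graphon satisfying the constraints is weakly isomorphic to some $W_P(z)$ --- which in the paper consumes a long case analysis (coordinate system via the part $F$, the auxiliary tiles, and the cleaning-up of the degree-balancing parts $Q$ and $R$ via Lemma~\ref{lm-typ-pairs}).

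A smaller but real issue is your Stage~2: Lemma~\ref{lm-subforcing} forces a finitely forcible graphon onto a \emph{single} part, but the coupling constraints of Stage~3 need decorations that refer to the internal parts of $W_0$ (the paper repeatedly roots and decorates vertices in $D_G$, $D_A$, $D_B$, $D_C$ separately). This is why the paper spreads $W_0$ over seven parts $D_A,\ldots,D_G$ and re-imports the decorated constraints from the proof of Theorem~\ref{thm-universal} rather than invoking a subforcing lemma as a black box; if you keep $W_F$ inside one opaque part, you lose the handles needed to state the coupling constraints with $P$-independent graphs. None of this makes your outline wrong, but as it stands it restates the difficulty of the theorem rather than resolving it.
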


We defer the precise description of the family $\WW_P$ and the proof of Theorem~\ref{thm-WPforced} to Section~\ref{sec-family}.
For the proof of our main result, we will need additional properties of graphons in the family $\WW_P$,
which we summarize in the next two lemmas.
In Section \ref{sec-analyze},
we define a function $t_{P,H}:[0,1]^\NN\to\RR$ for each bounding sequence $P$ and each graph $H$.
The next proposition asserts that $t_{P,H}$ is a totally analytic function and
it describes the density of $H$ for those $z$ that satisfy $P$.

\begin{lemma}
\label{lm-densityanalytic}
For any finite graph $H$ and bounding sequence $P=(p_i,l_i,u_i)_{i\in\NN}$,
the function $t_{P,H}(z)$ is totally analytic. Furthermore, for every $z\in[0,1]^\NN$ satisfying $p_i(z) \in [l_i,u_i]$ for all $i \in \NN$,
it holds that $t_{P,H}(z)=\tau(H,W_P(z))$.
\end{lemma}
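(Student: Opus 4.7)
The plan is to give an explicit series expansion for $\tau(H,W_P(z))$ based on the partitioned structure of $W_P(z)$ promised by Section~\ref{sec-family}, use that series as the definition of $t_{P,H}(z)$ for every $z\in[0,1]^\NN$, and then verify that the series, after any fixed partial differentiation, converges uniformly on $[0,1]^\NN$ in a way sufficient for total analyticity.

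The graphon $W_P(z)$ constructed in Section~\ref{sec-family} is partitioned into finitely many ``control'' parts (independent of $z$) together with countably many ``encoding'' parts whose sizes decay rapidly; on each tile, $W_P(z)(x,y)$ is either a fixed constant or a simple polynomial in a few of the coordinates $z_i$, and is always bounded by $1$. Expanding the integral formula for $\tau(H,W_P(z))$ by partitioning $[0,1)^{V(H)}$ according to which part contains each $x_u$ yields
\[
\tau(H,W_P(z)) = \sum_{\phi} \int_{\prod_u A_{\phi(u)}} \prod_{\{u,v\}\in E(H)} W_P(z)(x_u,x_v) \prod_{\{u,v\}\notin E(H)} \bigl(1-W_P(z)(x_u,x_v)\bigr) \,\dd x_{V(H)},
\]
where $\phi$ ranges over functions from $V(H)$ to the parts. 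Each term is a polynomial in the finitely many coordinates $z_i$ associated with parts in the image of $\phi$, of absolute value at most $\prod_{u\in V(H)}|A_{\phi(u)}|$. I would take this same series as the definition of $t_{P,H}(z)$ on all of $[0,1]^\NN$; the identity $t_{P,H}(z)=\tau(H,W_P(z))$ for $z$ satisfying $P$ is then immediate.

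For total analyticity I would check the two properties of the definition. Differentiating the $\phi$-th summand with respect to any finite multiset $M$ of coordinates only produces at most $|M|$ extra factors bounded by $1$, since the graphon is linear in each $z_i$ on each tile; the differentiated series is therefore still dominated term-by-term by a constant (depending only on $|M|$ and $|H|$) times $\prod_u |A_{\phi(u)}|$, whose total sum over $\phi$ equals $1$. Restricting to $\phi$ that use an encoding part of index at least $N$ gives a tail tending to zero as $N\to\infty$, which provides the uniform control required for continuity of all partial derivatives in the product topology. For analyticity in $z_1,\ldots,z_n$ with the remaining coordinates fixed, each term of the series is a polynomial in $z_1,\ldots,z_n$, and the same tail estimate extends the partial sums to a small complex polydisc neighborhood of $[0,1]^n$ via the Weierstrass $M$-test.

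The main obstacle is calibrating the combinatorial growth of the summation against the decay of the part sizes after differentiation. The rapid decay encoded in the bound $|\pi_{i,j}|\le 2^{-2^j}/9$ in the definition of a bounding sequence, together with the matching decay of the sizes of the encoding parts in the construction of Section~\ref{sec-family}, is precisely what will make these tail estimates work uniformly in $z$ after any finite number of differentiations.
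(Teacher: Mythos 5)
Your decomposition over maps $\phi$ from $V(H)$ to the parts is essentially the paper's decomposition over the set $\HH$ in Lemma~\ref{lm-densityinfinitesum}, but there is a genuine gap at the very first step: the individual terms of your series are \emph{not} polynomials in $z$ on all of $[0,1]^\NN$. The problem is the interaction between the tiles $C\times E$ and $D_G\times E$. If a vertex of $H$ is mapped into an $E$-block of index $i$, its position inside that block is constrained by its adjacencies to $C$-vertices through an initial segment of relative length $p_i(z)$ (or its complement), and simultaneously by its adjacencies to $D_G$-vertices through initial segments of relative lengths $l_i$ and $u_i$. Integrating out that vertex therefore produces factors such as $\min(p_i(z),l_i)$ or $\min(p_i(z),u_i)-l_i$, which are polynomials only after one restricts to $p_i(z)\in[l_i,u_i]$. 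Consequently, ``taking the same series as the definition of $t_{P,H}$ on all of $[0,1]^\NN$'' is ambiguous in a fatal way: if the terms are the integrals, then $t_{P,H}$ is just $\tau(H,W_P(\cdot))$, which has non-analytic kinks where some $p_i(z)$ crosses $l_i$ or $u_i$, so the first assertion of the lemma cannot be proved this way; if the terms are polynomial surrogates, you have not said which ones. This is exactly why the paper refines each $E$-block into three sub-parts cut at $l_i$ and $u_i$, conditions on the adjacencies outside $V_{CE}$, and writes the conditional probabilities as expressions like $(p_j(z)-l_j)/(u_j-l_j)$, explicitly using the hypothesis $p_j(z)\in[l_j,u_j]$ --- and why it stresses after Lemma~\ref{lm-densityinfinitesum} that the series equals $\tau(H,W_P(z))$ only on the feasible region. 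Your remark that the identity ``is then immediate'' skips precisely the point where the hypothesis of the lemma enters.

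A second, smaller but real, problem is the convergence bookkeeping. The graphon is not ``linear in each $z_i$ on each tile'': on the diagonal blocks of $C\times C$ the value is $z^{M_k}$, a monomial of degree up to $\Sigma(M_k)$ with repeated variables, so differentiating a term with respect to a multiset $M$ can multiply it by roughly $|M_k|^{|M|}$, which is unbounded over $\phi$; hence the claimed term-by-term domination by a constant depending only on $|M|$ and $|H|$ times $\prod_u|A_{\phi(u)}|$ is false, and the sum $\sum_\phi\prod_u|A_{\phi(u)}|=1$ is not the right majorant. Similarly, on a complex polydisc the terms are no longer bounded by $\prod_u|A_{\phi(u)}|$: a monomial of degree growing with the block index picks up a factor $(1+\eps)^{\deg}$, so the ``same tail estimate'' does not transfer and one must trade this growth against the geometric decay of the block sizes (and use the bound $|\pi_{i,j}|\le 2^{-2^j}/9$ for the $E$-side). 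These trades are exactly what the paper's coefficient bounds $c_H^{\Sigma(M_i)}$ in Lemma~\ref{lm-densityinfinitesum} and the convergence Lemmas~\ref{lm-expmonomboundfinitesum} and~\ref{lm-monomialboundanalytic} (together with Theorem~\ref{thm-rudin} to justify term-by-term differentiation) are set up to carry out; your proposal acknowledges that calibration is ``the main obstacle'' but the estimates you state would not survive it without these corrections.
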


We will also need the following proposition,
which roughly says that if we change $P$ in a controlled way,
then the functions $t_{P,H}$ do not change by much.

\begin{lemma}
\label{lm-WPvaries}
For any finite graph $H$, bounding sequence $P$, and $\eps>0$, there exists an integer $k$ such that for any $k$-strengthening $P'$ of $P$,
the function $t_{P',H}$ is $\eps$-close to $t_{P,H}$.
\end{lemma}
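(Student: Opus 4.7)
The plan is to exploit a locality feature of the construction of $W_P(z)$ from Section~\ref{sec-family}: each triple $(p_i,l_i,u_i)$ in the bounding sequence $P$ is encoded into a portion of the graphon whose total measure decays (summably) in $i$. Under this kind of encoding, a $k$-strengthening of $P$ only alters the structure of $W_P(z)$ on a small region $R_k\subseteq[0,1)^2$ whose measure tends to $0$ as $k\to\infty$. Thus my first step is to identify, from the construction, a measurable set $R_k$ containing every pair $(x,y)$ for which $W_P(z)(x,y)$ could differ from $W_{P'}(z)(x,y)$ for some $z\in[0,1]^\NN$ in the feasible region and some $k$-strengthening $P'$ of $P$, and to observe that $|R_k|\to 0$.

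Next I would handle the values of the functions on the feasible set. Fix a graph $H$ on $h$ vertices. By Lemma~\ref{lm-densityanalytic}, for $z$ feasible for both $P$ and $P'$ we have $t_{P,H}(z)=\tau(H,W_P(z))$ and $t_{P',H}(z)=\tau(H,W_{P'}(z))$; subtracting the integral expressions for these $\tau$-densities and using a union bound over the $h$ sampled coordinates, the difference is at most $h|R_k|$, hence arbitrarily small for $k$ large.

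The third step is to pass from the feasible set to all of $[0,1]^\NN$ and to control the first partial derivatives. For this I would use the explicit formula for $t_{P,H}$ constructed in Section~\ref{sec-analyze}, which (as is typical for such constructions) expresses $t_{P,H}(z)$ as an absolutely convergent sum over ``sampling patterns,'' with each summand a product of monomials in the entries $z_j$ and the coefficients $\pi_{i,j}$ of the polynomials $p_i$. Since $P$ and $P'$ agree on the first $k$ triples, only summands that involve some $p_i$ with $i>k$ differ between $t_{P,H}$ and $t_{P',H}$. The bound $|\pi_{i,j}|\le 2^{-2^j}/9$, combined with the summable-measure property of $R_k$, yields a geometrically convergent tail estimate for both the function difference and for each of its first partial derivatives; choosing $k$ large enough then makes the tail contribution at most $\eps$ uniformly in $z\in[0,1]^\NN$.

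The main obstacle is the third step: extracting from the construction a sufficiently clean formula for $t_{P,H}$ to make the tail estimate work uniformly not only for the function values but also for all first partial derivatives, and showing that this formula depends on $(p_i,l_i,u_i)$ only through terms of bounded ``weight'' at index~$i$. The derivative bounds in the definition of a bounding sequence and the tight coefficient bound $|\pi_{i,j}|\le 2^{-2^j}/9$ are precisely what will ensure that differentiating the series term by term does not destroy convergence, so once the construction in Section~\ref{sec-family} and the formula in Section~\ref{sec-analyze} are in hand, the estimate should reduce to straightforward term-by-term accounting.
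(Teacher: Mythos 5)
Your overall strategy is the same as the paper's (which proves the lemma by combining an $L_1$-continuity statement for the graphon family, Lemma~\ref{lm-add-P}, with the series representation of Lemma~\ref{lm-densityinfinitesum} and a tail estimate, packaged as Lemma~\ref{lm-approx-stronger}), but two of your concrete claims fail. First, it is not true that a $k$-strengthening changes $W_P(z)$ only on a region $R_k$ of small measure. Strengthening $P$ changes the auxiliary graphon $W_F$, and hence the universal graphon $W_0$ of Theorem~\ref{thm-universal} that fills the entire block $(D_A\cup\ldots\cup D_G)^2$; the only control available there is an $L_1$ bound via Proposition~\ref{prop-change}, not a bound on the measure of the set where the two graphons differ. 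The $Q$-column, which re-normalizes degrees by an integral over the other parts, likewise changes by a small amount on a set of large measure. This is harmless for your step 2 (an $L_1$ bound suffices to compare $\tau(H,\cdot)$), but it undermines the way you invoke ``the summable-measure property of $R_k$'' in step 3.

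Second, and more seriously, the claim in step 3 that ``only summands that involve some $p_i$ with $i>k$ differ between $t_{P,H}$ and $t_{P',H}$'' is false. In the decomposition of Lemma~\ref{lm-densityinfinitesum}, each summand factors as $q_{P,H'}(z)=s_{P,H'}\cdot r_{P,H'}(z)\cdot\prod_i|X_i|$, where only $r_{P,H'}$ is determined by the finitely many elements of $P$ referenced by the labels of $H'$; the scalar $s_{P,H'}$ is the probability contribution of all pairs meeting parts outside $C\cup E$, and it depends on the whole bounding sequence through the tiles $D_X\times D_Y$, $C\times D_G$, $D_G\times E$ and the $Q$-column. Hence every summand changes under a $k$-strengthening, no matter how large $k$ is, and your term-by-term accounting breaks down. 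The paper repairs exactly this point by splitting the control: choose a finite subfamily $\HH'$ (independent of $P$) so that the tail over $\HH\setminus\HH'$ is at most $\eps/4$ for both the values and the first partial derivatives, using the bounds $\beta_{H',i}$ and $c_H^{\Sigma(M_i)}$; for the finitely many $H'\in\HH'$, agreement on the first $k_{P,H'}$ elements makes $r_{P,H'}$ literally unchanged, while the $L_1$-closeness of the graphon families furnished by Lemma~\ref{lm-add-P} forces $s_{P,H'}$ to move by at most $\eps/(2K D)$-type amounts. Your proposal needs this (or an equivalent) separation of the ``explicit'' dependence on $p_i$, $i>k$, from the ``implicit'' dependence of the constant factors on all of $P$; as written, that gap is not bridged.
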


Lemmas~\ref{lm-densityanalytic} and~\ref{lm-WPvaries} will be proven in Section~\ref{sec-analyze}.

\section{Stabilization}
\label{sec-diag}

Our main argument involves a sequence of steps,
each resulting in a strengthening of a bounding sequence that defines a family of graphons as above.
The whole process is carried in a careful way so that it is still possible to vary some of the parameters
without changing the densities of any given finite set of graphs.
In this section, we present analytic lemmas needed to perform these steps.

We first prove the following analytic statement. The proof follows the lines of the proof of Gr\"onwall's 
inequality \cite{bib-gronwall}, see also~\cite{bib-bellman}, however,
we present an entire proof here for completeness.

\begin{lemma}\label{lm-implicitsmallchangeclose} 
For every integer $n\in\NN$, reals $L>0$, $D>0$ and $\eps>0$, and closed interval $[a,b]\subseteq\RR$,
there exist $\delta>0$ and $\eps_0\in(0,\eps)$ such that the following holds.
Let $g:[a,b]\to\RR^n$ be a function defined on $[a,b]$ and let $V=\{(x,N_\eps(g(x))):x\in [a,b]\}$.
Furthermore, let $f : V \to \RR^n$ be an analytic function such that
$f(x,g(x))=0$ for all $x \in [a,b]$,
$f$ and each partial derivative of $f$ is $L$-Lipschitz on $V$, and
the absolute value of the determinant of the Jacobi matrix
\[\left(\frac{\partial}{\partial y_i} f_j(x,y)\right)_{i,j=1}^n\]
is at least $D$ on $V$.
For every analytic function $\wf : V \to \RR^n$ that is $\delta$-close to $f$ and
for every $(x_0,y_0)\in V$ such that $x_0\in [a,b]$, $\wf(x_0,y_0)=0$ and $\|g(x_0)-y_0\|_\infty \le \eps_0$,
there exists a function $\wg : [a,b] \to V$ such that
$\wg(x_0)=y_0$, $\|\wg(x)-g(x)\|_\infty \le \eps$ and $\wf(x,\wg(x))=0$ for all $x \in [a,b]$.
\end{lemma}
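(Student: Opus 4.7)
The overall plan is to combine the implicit function theorem (to produce local continuations of a solution of $\wf(x,\wg(x))=0$ from the starting point $(x_0,y_0)$) with a Grönwall-type estimate (to ensure that the continued solution cannot leave the open tube $V$ of radius $\eps$ around the graph of $g$). Concretely, I would let $I\subseteq [a,b]$ be the maximal interval containing $x_0$ on which there exists an analytic function $\wg:I\to\RR^n$ with $\wg(x_0)=y_0$, $(x,\wg(x))\in V$, and $\wf(x,\wg(x))=0$ for all $x\in I$. Existence on a neighborhood of $x_0$ in $[a,b]$ follows from the implicit function theorem applied to $\wf$ at $(x_0,y_0)$, provided $\delta$ is small enough so that the relevant Jacobi matrix is still invertible there. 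The goal is then to show that $I=[a,b]$ and that $\|\wg(x)-g(x)\|_\infty\le\eps$ throughout.

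For the derivative estimates, the $L$-Lipschitz assumption on $f$ and its first derivatives implies that the entries of the Jacobi matrix $J_y f(x,y)=(\partial_{y_i}f_j)$ are bounded on $V$ by a constant depending only on $L$ and on a value of $f$ at some base point; combined with $|\det J_y f|\ge D$ this yields $\|J_y f(x,y)^{-1}\|\le C_0$ for a constant $C_0=C_0(L,D,n)$. Since $\wf$ is $\delta$-close to $f$ in the functions and first derivatives, for $\delta$ small we have $|\det J_y \wf|\ge D/2$ and $\|J_y \wf^{-1}\|\le 2C_0$ on $V$. The implicit function theorem gives
\[g'(x)=-J_y f(x,g(x))^{-1}\,\partial_x f(x,g(x)),\qquad \wg'(x)=-J_y \wf(x,\wg(x))^{-1}\,\partial_x \wf(x,\wg(x)),\]
and subtracting, using the Lipschitz bounds on $f$ and its derivatives together with the $\delta$-closeness of $\wf$ to $f$, a routine estimate produces constants $K_1,K_2$ depending only on $L,D,n$ such that
\[\|\wg'(x)-g'(x)\|_\infty \le K_1\|\wg(x)-g(x)\|_\infty + K_2\delta\]
for every $x\in I$.

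Setting $h(x)=\|\wg(x)-g(x)\|_\infty$ and integrating the above inequality between $x_0$ and $x$ gives
\[h(x)\le h(x_0)+K_2\delta|b-a|+K_1\int_{x_0}^{x}h(s)\,ds,\]
so Grönwall's inequality yields $h(x)\le \bigl(h(x_0)+K_2\delta|b-a|\bigr)e^{K_1|b-a|}$. Choosing $\eps_0\in(0,\eps)$ and $\delta>0$ so small that $(\eps_0+K_2\delta|b-a|)e^{K_1|b-a|}<\eps$, we obtain $h(x)<\eps$ strictly on $I$; hence $(x,\wg(x))$ stays in the interior of the tube $V$. Combined with the fact that $\wg'$ is bounded (again via the implicit function theorem formula and the uniform bound on $J_y\wf^{-1}$), $\wg$ extends continuously to any endpoint of $I$ lying in $[a,b]$, and at such an endpoint the implicit function theorem can be reapplied (using $|\det J_y\wf|\ge D/2$) to continue $\wg$ further; this contradicts maximality unless $I=[a,b]$. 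The main obstacle is producing the linear differential inequality with constants $K_1,K_2$ depending only on $L,D,n$, so that the exponential Grönwall bound is a function of $\eps_0$, $\delta$ and $|b-a|$ alone; once those constants are fixed, the choice of $\eps_0$ and $\delta$ and the maximality argument are straightforward.
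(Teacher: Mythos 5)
Your proposal is correct and follows essentially the same route as the paper: the paper also obtains $g'$ and $\wg'$ from the implicit function theorem, bounds the difference of the derivatives linearly in $\|\wg-g\|$ plus a small term controlled by $\delta$, runs a Gr\"onwall-type argument (done by hand there, via the weighted function $H(x)=(G(x)+\tfrac{\rho}{K+\rho})e^{\mp(x-x_0)(K+\rho)}$ applied to $G=\|g-\wg\|_2^2$), and then shows the maximal interval of continuation is all of $[a,b]$. Your use of the integral form of Gr\"onwall's inequality in place of this explicit computation is only a cosmetic difference.
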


\begin{proof}
Fix a point $(x_0,y_0)\in V$ and the functions $f$, $\wf$ and $g$ for the proof.
Let $A(x,y_1,\ldots,y_n)$ be the $n\times n$ Jacobi matrix of $f$ with respect to $y_1,y_2,\ldots,y_n$, and
let $h(x,y_1,\ldots,y_n)$ be the partial derivative of $f$ with respect to $x$.
We define $\wA$ and $\wh$ for $\wf$ in an analogous way.
Take $\delta$ small enough so that 
the absolute value of the determinant of $\wA$ is at least $D/2$ on the whole set $V$ (note that
this choice of $\delta$ depends only on $n$, $D$ and $L$);
in particular, $\wA$ is invertible on whole set $V$.
By the Implicit Function Theorem,
if $\wg$ is defined for $x\in (a,b)$, $\wf(x,\wg(x))=0$ and $\wA(x,g(x))$ is invertible,
then $\wg$ can be extended to an open neighborhood of~$x$.
Let $U \subseteq [a,b]$ be the maximal interval containing $x_0$ that
$\wg$ can be extended to in a way that $\wf(x,\wg(x))=0$ and $\|\wg(x)-g(x)\|_\infty \le \eps$.

We next analyze the derivative of $f(x,g(x))$ as a function of $x$ on $(a,b)$:
\[\frac{\partial}{\partial x}f(x,g(x))=h(x,g(x))+A(x,g(x))\frac{\partial}{\partial x} g(x).\]
Since this derivative is zero on $(a,b)$, it follows that
\[\frac{\partial}{\partial x} g(x)=-(A(x,g(x)))^{-1} h(x,g(x)).\]
Similarly, it holds that
\[\frac{\partial}{\partial x} \wg(x)= -(\wA(x,\wg(x)))^{-1} \wh(x,\wg(x))\]
for any $x$ in the interior of $U$.
Let $m(x,y)=-(A(x,y))^{-1}h(x,y)$ and analogously $\wm(x,y)=-(\wA(x,y))^{-1}\wh(x,y)$.
Observe that both $m$ and $\wm$ are analytic and
$\|m(x,y)-\wm(x,y)\|_2 \le \rho$ for all $(x,y)\in V$,
where $\rho$ is a constant depending only on $n$, $D$, $L$, $\eps$ and $\delta$.
In addition, if $n$, $D$, $L$, and $\eps$ are fixed,
$\rho$ can be made arbitrarily close to zero by choosing $\delta$ sufficiently small.

Let us now consider the function
\[G(x)=\|g(x)-\wg(x)\|_2^2=\sum_{i=1}^n (g_i(x)-\wg_i(x))^2\]
defined on $U$.
Note that $G(x_0) \le n\eps_0^2$.
We next analyze the derivative of $G(x)$.
\begin{align*}
\frac{\partial}{\partial x} G(x) &= 2 \left\langle \frac{\partial}{\partial x} g(x)-\frac{\partial}{\partial x}\wg(x),g(x) -\wg(x) \right\rangle\\
  &= 2\left\langle m(x,g(x))-\wm(x,\wg(x)),g(x) - \wg(x) \right\rangle\\
  &= 2\left\langle m(x,g(x))-m(x,\wg(x)),g(x) - \wg(x) \right\rangle\\
  &+ 2\left\langle m(x,\wg(x))-\wm(x,\wg(x)),g(x) - \wg(x) \right\rangle.
\end{align*}  
It follows that
\begin{equation} \label{eq-exppartialderivbound}
\left|\frac{\partial}{\partial x} G(x)\right|\le K\cdot G(x)+2\rho \sqrt{G(x)} \le (K+\rho)G(x)+\rho,
\end{equation}
where $K$ depends only on $n$, $L$, $D$ and $\delta$ (since each entry of the matrix $A(x,y)^{-1}$ is at most $n!L^n/D$, and
each entry of the matrix is Lipschitz on $V$ for a Lipschitz constant depending only on $n$, $L$ and $D$).
We now consider the function \hbox{$H:U\to\RR$} that is defined as
\begin{equation} \label{eqn-Hdef}
H(x)=\left(G(x)+\frac{\rho}{K+\rho}\right)e^{-(x-x_0)(K+\rho)}.
\end{equation}
The derivative of $H$ is equal to
\[\frac{\partial}{\partial x} H(x)=\left(\frac{\partial}{\partial x}G(x) - (K+\rho)G(x)-\rho\right)e^{-(x-x_0)(K+\rho)} \le 0,\]
where the inequality follows from \eqref{eq-exppartialderivbound}.
This implies that $H(x) \le H(x_0)$ for $x\in U$ such that $x\ge x_0$.
Hence, we obtain using \eqref{eqn-Hdef} that
\[\left(G(x)+\frac{\rho}{K+\rho}\right) e^{-(x-x_0)(K+\rho)} \le G(x_0)+\frac{\rho}{K+\rho} \le n\eps_0^2 +\frac{\rho}{K+\rho}.\]
In particular, it is possible to choose $\eps_0$ and $\delta$ small enough (the choice of $\delta$ makes $\rho$ small enough) that
it holds that
\[\left(n\eps_0^2+\frac{\rho}{K+\rho}\right)e^{(b-x_0)(K+\rho)} <\eps^2.\]
Hence, we obtain that
\[G(x) \le \left(n\eps_0^2+\frac{\rho}{K+\rho}\right)e^{(x-x_0)(K+\rho)}-\frac{\rho}{K+\rho}<\eps^2.\]
for all $x\in U$ such that $x\ge x_0$.

We next show that the interval $[x_0,b]$ is contained in $U$.
Let $\beta$ be the supremum of $U$ and suppose that $\beta<b$.
Since the left limit $y_{\beta} = \lim_{x \nearrow \beta} \wg(x)$ satisfies that $\wf(\beta,y_{\beta})=0$,
we can assume that $\beta\in U$.
However, this implies that $\|\wg(\beta)-g(\beta)\|_\infty\le\|\wg(\beta)-g(\beta)\|_2<\eps$ and
the function $\wg$ can be extended to a neighborhood of $\beta$,
contradicting the fact that $\beta$ is the supremum of $U$.

An analogous argument using the function 
$$H(x)=\left(G(x)+\frac{\rho}{K+\rho}\right)e^{(x-x_0)(K+\rho)}$$ 
yields that 
it is possible to choose $\eps_0$ and $\delta$ small enough that
$G(x)\le\eps^2$ for all $x\in U$ such that $x\le x_0$,
which then yields that the interval $[a,x_0]$ is a subset of~$U$.
This completes the proof.
\end{proof}

To state the next lemma, we need a definition.
A \emph{variable system} consists of
\begin{itemize}
\item a sequence of non-degenerate closed intervals $U_k \subseteq [0,1]$, $k \in \NN$, and
\item a sequence of closed sets $V_k$, $k\in\NN$, with $V_1 \subseteq U_1$ and $V_k \subseteq V_{k-1} \times U_{k}$.
\end{itemize}
We say that a variable system is \emph{$c$-strong} for $c>0$
if the measure of the set $\{x':(x,x') \in V_k\}$ is at least $c|U_k|$ for every $x \in V_{k-1}$.
A variable system is \emph{$k_0$-finite} if $V_k=V_{k-1} \times U_k$ for each $k>k_0$.

We will need the following lemma on variable systems.

\begin{lemma} \label{lm-Vkshrink}
Suppose that $(U_k,V_k)_{k \in \NN}$ is a $c$-strong variable system with $c>0$, and
$T$ is a non-zero analytic function on $V_\ell$, $\ell\in\NN$.
For any $c'<c$, there exist subsets $V_k' \subseteq V_k$, $k \in [\ell]$, such that
$(U_k,V_k')_{k \in \NN}$, with $V_k'=V_k \cap (V_{k-1}' \times U_k)$ for $k>\ell$,
is a $c'$-strong variable system and $T$ is non-zero on $V_\ell$ everywhere.
\end{lemma}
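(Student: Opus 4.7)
The plan is to remove from $V_\ell$ a small open neighborhood of the zero set $Z=\{v\in V_\ell:T(v)=0\}$ and then propagate the resulting shrinkings down through the variable system. The key fact is that $Z$ is closed in $V_\ell$ by continuity of $T$ and has Lebesgue measure zero in $\RR^\ell$, because $T$ extends to a non-identically-zero analytic function on an open neighborhood of $V_\ell$ and the zero set of such a function has Lebesgue measure zero.

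I will construct open sets $E_k\subseteq\RR^k$ for $k\in[\ell]$ by backward induction on $k$ and then set
\[V_k'=V_k\setminus\bigcup_{j\le k}\pi_{k,j}^{-1}(E_j),\]
where $\pi_{k,j}\colon\RR^k\to\RR^j$ denotes the projection onto the first $j$ coordinates; for $k>\ell$ we simply take $V_k'=V_k\cap(V_{k-1}'\times U_k)$ as prescribed. To start, fix a small $\gamma>0$ to be determined and use outer regularity of Lebesgue measure to cover $Z$ by an open $E_\ell\subseteq\RR^\ell$ with $|E_\ell|\le\gamma$. Given $E_{k+1}$, apply Fubini and Markov's inequality: the set
\[A_k=\bigl\{x\in V_k:\bigl|\{x'\in U_{k+1}:(x,x')\in E_{k+1}\}\bigr|\ge(c-c')|U_{k+1}|\bigr\}\]
has Lebesgue measure at most $|E_{k+1}|/((c-c')|U_{k+1}|)$, and we cover $A_k$ by an open $E_k\subseteq\RR^k$ with $|E_k|\le 2|A_k|$ using outer regularity again. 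Iterating gives $|E_k|\le\gamma\cdot\prod_{j=k+1}^{\ell}2/((c-c')|U_j|)$, so a sufficiently small choice of $\gamma$ makes $|E_k|<(c-c')|U_k|$ for every $k\in[\ell]$; this last bound is exactly what is needed at level $k=1$, where the slice at the empty tuple is the full set $E_1$.

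The verification is then routine. Each $V_k'$ is closed because the union of preimages $\bigcup_{j\le k}\pi_{k,j}^{-1}(E_j)$ is open in $\RR^k$; the inclusion $V_k'\subseteq V_{k-1}'\times U_k$ holds because the defining conditions for $V_{k-1}'$ appear among those for $V_k'$; and for $x\in V_{k-1}'$, the slice of $V_k'$ at $x$ equals the slice of $V_k$ at $x$ minus the slice of $E_k$ at $x$, yielding measure at least $c|U_k|-(c-c')|U_k|=c'|U_k|$ since $x\notin E_{k-1}\supseteq A_{k-1}$. Finally $V_\ell'\subseteq V_\ell\setminus E_\ell\subseteq V_\ell\setminus Z$, so $T$ is nowhere zero on $V_\ell'$, and for $k>\ell$ the $c'$-strength condition is automatic because intersecting with $V_{k-1}'\times U_k$ does not alter the $U_k$-slice at points of $V_{k-1}'$. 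The main substantive step is the propagation of smallness across levels, which is handled uniformly by the Markov--Fubini bound $|A_k|\le|E_{k+1}|/((c-c')|U_{k+1}|)$; after that, all conditions follow by bookkeeping.
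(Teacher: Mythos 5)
Your proof is correct and follows essentially the same strategy as the paper's: remove an open set of small measure around the zero set of $T$ and propagate the resulting bad sets down through the coordinates by a Fubini--Markov bound so that slice measures drop by at most $(c-c')|U_k|$; the paper merely arranges openness differently, taking the sublevel set $\{|T|<\delta\}$ and defining the bad set at level $i$ via fibers over the whole tail $\prod_{i<j\le\ell}U_j$, whereas you use outer-regularity covers and a level-by-level iteration. The only nit is the bound $|E_k|\le 2|A_k|$, which is unattainable if $A_k\neq\emptyset$ has measure zero; covering $A_k$ by an open set of measure at most $|A_k|+\gamma_k$ for suitably small $\gamma_k$ fixes this without affecting the rest of the argument.
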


\begin{proof}
We first consider the case when $\ell=1$.	
Let $Z$ be the set of all $x\in U_1$ such that $T(x)=0$.
Since $T$ is analytic and $U_1$ is bounded, the set $Z$ is finite.
Let $X$ be the union of open $\delta$-neighborhoods of the points in $Z$,
where $\delta>0$ is sufficiently small so that $|X|<(c-c')|U_1|$.
The set $V_1'=V_1\setminus X$ is a closed set that satisfies the statement of the lemma.
	
The argument for $\ell>1$ extends the one presented for $\ell=1$ but is more complex.
Let $\eps=c-c'$.
Let $Z$ be the set of all $x\in V_{\ell}$ such that
$$T(x_1,\ldots,x_{\ell})=0\,\mbox{.}$$
Since $T$ is analytic and not everywhere zero, $Z$ is a closed set with measure zero.
	
Let $X\subseteq V_{\ell}$ be the set of points with $|T(x)|<\delta$ 
where $\delta>0$ is sufficiently small so that the measure of $X$ is less than $\eps^{\ell}\prod_{j=1}^{\ell}|U_j|$. Note that $X$ is open.
For every $i\in[\ell-1]$,
let $X_{i}$ be the set of the points $x\in V_i$ such that
the measure of the set $\{x'\in \prod_{i<j \le \ell}U_j, (x,x') \in X\}$
is greater than $\eps^{\ell-i}\prod_{j=i+1}^{\ell}|U_j|$.
Let $X_{\ell}=X$.
Observe that each of the sets $X_i$, $i\in [\ell]$, is open.
In addition,
the measure of $X_1$ is at most $\eps|U_1|$, and
for any $i\in[\ell-1]$ and $x \in V_{i}$,
if the measure of the points $x' \in U_i$ such that $(x,x')\in X_{i+1}$ is greater than $\eps|U_{i+1}|$,
then $x\in X_{i}$.

We now define $V_i'$ for each $i\in [\ell]$ in increasing order
by setting $V_i'=\left(V_i \setminus X_{i}\right) \cap \left( V_{i-1}' \times U_i \right)$.
Since each set $X_i$ is open, we get that $V_i'$ is closed.
If $x\in V_i \setminus X_{i}$ for $i\in[\ell-1]$,
then the measure of $x'\in U_{i+1}$ such that $(x,x')\in X_{i+1}$ is at most $\eps|U_{i+1}|$.
It follows that for every $x\in V'_i$,
the measure of $x'\in U_{i+1}$ such that $(x,x')\in V'_{i+1}$ is at least $(c-\eps)|U_{i+1}|=c'|U_{i+1}|$.
Since the measure of $X_1$ is at most $\eps|U_1|$,
it also follows that the measure of $V'_1$ is at least $(c-\eps)|U_1|=c'|U_1|$.
We set $V_k'=V_k \cap (V_{k-1}' \times U_k)$ for $k>\ell$, and conclude that $(U_k,V_k')_{k \in \NN}$is a $c'$-strong variable system.
\end{proof}

A \emph{stabilizing system} $S=(U_k,V_k,I_k,J_k,d_k,b_k,w_k)_{k\in\NN}$ is a sequence of $7$-tuples such that
\begin{itemize}
\item $(U_k,V_k)_{k\in\NN}$ is a variable system,
\item $I_k$ is a subset of $[k]$,
\item $J_k$ is a subset of $[k+1]$ with $|I_k|=|J_k|$,
\item $d_k\in [k+1]$ such that $d_k\not\in J_k$,
\item $b_k\in (0,1)^{k+1}$ and $b_{k,d_k}$ is in the interior of $U_k$, and
\item $w_k$ is an analytic function from $V_{k-1}\times U_k\to [0,1]^{J_k\cup\{d_k\}}$ such that
      $w_k(x)_{d_k}=x_{k}$ for every $x\in V_{k-1}\times U_k$ and
      $w_k(x,b_{k,d_k})_i=b_{k,i}$ for every $x\in V_{k-1}$ and $i\in J_k$.
\end{itemize}
The functions $w_k$, $k\in\NN$, will be referred to as \emph{stabilizers}.
We will say that a stabilizing system is \emph{$c$-strong} if the variable system $(U_k,V_k)_{k\in\NN}$ is $c$-strong.
Finally, a stabilizing system is \emph{$k_0$-finite}
if the variable system $(U_k,V_k)_{k\in\NN}$ is \hbox{$k_0$-finite}, and
$J_k = \emptyset$ and $d_k=1$ for every $k>k_0$;
we will say that a stabilizing system is \emph{finite} if it is $k_0$-finite for some $k_0\in\NN$.

Given a stabilizing system $S$,
we define the \emph{extended stabilizers} $\wtw_k:V_{k-1} \times U_k\to [0,1]^{k+1}$
by
\[\wtw_k(x_1,x_2,\ldots,x_k)_i=
\begin{cases}
w_k(x_1,x_2,\ldots,x_k)_i & \text{if $i \in J_k\cup\{d_k\}$, and}\\
b_{k,i} & \text{otherwise.}
\end{cases}
\]
We also define a function $\wtw_{\le k}:V_{k-1} \times U_k\to [0,1]^{2+3+\ldots+(k+1)}$ as
\[\wtw_{\le k}(x_1,\ldots,x_k)=(\wtw_1(x_1),\wtw_2(x_1,x_2),\ldots,\wtw_k(x_1,x_2,\ldots,x_k)).\]
Finally, we write $b_{>k}$ for the point $(b_{k+1},b_{k+2},\ldots)$.

Suppose that $S$ is a stabilizing system and
$t_k:[0,1]^\NN \to [0,1]$, $k \in \NN$, are totally analytic functions.
Let $M_k(\vecz{a})$ for $k\in\NN$ and $\vecz{a}\in\AAA$ be the matrix
\[M_k(\vecz{a})=\left(\frac{\partial}{\partial \vecz{a}_{k,j}}t_{i}(a)\right)_{i\in [k],j\in [k+1]}.\]
We say that the system $S$ \emph{stabilizes} the functions $t_k$, $k\in\NN$,
if for every $k \in \NN$ and $x \in V_{k-1} \times U_k$, it holds that
\begin{enumerate}[(P1)]
\item the matrix $M_k(\wtw_{\le k}(x),b_{>k})$, restricted to the entries indexed by $I_k \times J_k$ is invertible, and
\item for each $i\in I_k$, and $x\in V_k$, we have
      \[t_i(\wtw_{\le k}(x),b_{>k})=t_i(\wtw_{\le k-1}(x_1,\ldots,x_{k-1}),b_k,b_{>k}).\]
\end{enumerate}
Note that the properties (P1) and (P2) always hold if $I_k=J_k=\emptyset$.

We are now ready to state the following lemma.

\begin{lemma}
\label{lm-smallchangestabilize}
Let $S=(U_k,V_k,I_k,J_k,d_k,b_k,w_k)_{k\in\NN}$ be a finite stabilizing system that
stabilizes totally analytic functions $t_k$, $k\in\NN$.
For every $\eps>0$, there exists $\delta>0$ such that the following holds.
Suppose that for each $k\in\NN$, we have $\whb_k \in (0,1)^{k+1}$ such that
$\whb_{k,d_k}$ is in the interior of $U_k$ and $\|b_{k}-\whb_{k}\|_\infty\le\delta$, and we have $\wht_k:[0,1]^\NN \to [0,1]$ totally analytic functions such that $\wht_k$ is $\delta$-close to $t_k$. Then
there exist functions $\whw_k:V_{k-1} \times U_k \to (0,1)^{J_k\cup\{d_k\}}$ such that
the stabilizing system $\whS=(U_k,V_k,I_k,J_k,d_k,\whb_k,\whw_k)_{k\in\NN}$ stabilizes $\wht_k$, $k\in\NN$, and
$\|w_k(x)-\whw_k(x)\|_\infty \le \eps$ for every $k\in\NN$ and $x\in V_{k-1} \times U_k$.
\end{lemma}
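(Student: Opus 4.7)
The plan is to construct the perturbed stabilizers $\whw_k$ inductively in $k$, pushing each perturbation through the implicit function defining $w_k$ via Lemma~\ref{lm-implicitsmallchangeclose}. Let $k_0$ be such that $S$ is $k_0$-finite. For $k>k_0$ the conditions $J_k=\emptyset$ and $d_k=1$ together with $\whw_k(x)_{d_k}=x_k$ force $\whw_k=w_k$, and (P1) and (P2) hold vacuously; so only $\whw_1,\ldots,\whw_{k_0}$ need to be specified. Choose tolerances $\eps=\eps_0>\eps_1>\cdots>\eps_{k_0}>0$ by backwards induction from $j=k_0$, so that $\|w_j-\whw_j\|_\infty\le\eps_j$ leaves enough slack for every subsequent invocation of Lemma~\ref{lm-implicitsmallchangeclose}; the final $\delta$ is then chosen small enough to launch the first step.

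At level $k\le k_0$, suppose $\whw_1,\ldots,\whw_{k-1}$ have already been constructed so that the partial system satisfies (P1) and (P2) for $\wht_i$ at all lower levels. For each fixed $x=(x_1,\ldots,x_{k-1})\in V_{k-1}$, define $f=f^x:U_k\times [0,1]^{J_k}\to\RR^{I_k}$ by
\[f_i(x_k,y)=t_i\bigl(\wtw_{\le k-1}(x),z(y,x_k),b_{>k}\bigr)-t_i\bigl(\wtw_{\le k-1}(x),b_k,b_{>k}\bigr),\]
where $z(y,x_k)\in [0,1]^{k+1}$ has $z_{d_k}=x_k$, $z_j=y_j$ for $j\in J_k$, and $z_j=b_{k,j}$ otherwise. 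Property~(P2), combined with the analyticity of $t_i$ and $w_k$ and uniqueness of the implicit function through the base point $(b_{k,d_k},b_k|_{J_k})$, implies that $g(x_k):=w_k(x,x_k)|_{J_k}$ satisfies $f(x_k,g(x_k))=0$ throughout $U_k$; property~(P1) together with compactness of $V_{k-1}\times U_k$ yields uniform (in $x$) bounds: a lower bound $D>0$ on the absolute value of the determinant of $\partial f/\partial y$ restricted to the $I_k\times J_k$ block, a Lipschitz constant $L$ for $f$ and its partial derivatives, and an $\eps$-neighborhood on which $f$ remains analytic. Replacing $t_i$ by $\wht_i$, $b_k$ and $b_{>k}$ by $\whb_k$ and $\whb_{>k}$, and $\wtw_{\le k-1}$ by $\whwtw_{\le k-1}$ produces a map $\wf=\wf^x$ uniformly close to $f$ in values and first derivatives, with the closeness tending to zero as $\delta,\eps_1,\ldots,\eps_{k-1}\to 0$.

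Apply Lemma~\ref{lm-implicitsmallchangeclose} to each $f^x$ with starting point $(x_k^0,y_0)=(\whb_{k,d_k},\whb_k|_{J_k})$: direct substitution gives $\wf^x(x_k^0,y_0)=0$, and $\|y_0-g(x_k^0)\|_\infty\le\delta$ because $g(b_{k,d_k})=b_k|_{J_k}$ with $\whb_k$ being $\delta$-close to $b_k$. The lemma yields $\wg^x:U_k\to [0,1]^{J_k}$ with $\|\wg^x-g\|_\infty\le\eps_k$ and $\wf^x(x_k,\wg^x(x_k))=0$ for every $x_k\in U_k$. Set $\whw_k(x,x_k)|_{J_k}=\wg^x(x_k)$ and $\whw_k(x,x_k)_{d_k}=x_k$. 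Joint analyticity of $\whw_k$ in $(x,x_k)$ follows from the multi-variable implicit function theorem — applicable because the Jacobian of $\wf$ in $y$ is invertible — combined with uniqueness, which forces the locally analytic solution produced by the multivariate IFT to coincide with the global $\wg^x$. Property~(P2) at level $k$ in the new system is immediate from $\wf(x_k,\wg^x(x_k))=0$; (P1) holds because $\partial\wht_i/\partial\vecz{a}_{k,j}$ is uniformly close to $\partial t_i/\partial\vecz{a}_{k,j}$ on the relevant slice, and hence the determinant of the $I_k\times J_k$ block of the new Jacobian remains at least $D/2$.

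The main obstacle is the forward propagation of errors through the induction: a perturbation at level $j$ degrades every quantity subsequently fed to levels $k>j$, so the tolerances $\eps_{k_0},\eps_{k_0-1},\ldots,\eps_1,\delta$ must be chosen in reverse order, with each invocation of Lemma~\ref{lm-implicitsmallchangeclose} (which prescribes a required $\delta$ in terms of an allowed $\eps$) receiving enough slack from the inductive hypothesis. A secondary technical point is that Lemma~\ref{lm-implicitsmallchangeclose} is stated for a scalar independent variable, whereas $\whw_k$ must be analytic jointly in $(x_1,\ldots,x_k)$; this is resolved by using the multivariate IFT for local joint analyticity and using Lemma~\ref{lm-implicitsmallchangeclose} only to guarantee global existence of $\wg^x$ along each one-dimensional slice $x_k\in U_k$ obtained by fixing $(x_1,\ldots,x_{k-1})\in V_{k-1}$.
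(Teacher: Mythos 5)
Your proposal follows essentially the same route as the paper: induct over the levels $k\le k_0$, compare the slice maps $f_{\wx}$ and $\wf_{\wx}$ built from $t_i$ and $\wht_i$ (close in values and first derivatives once $\delta$ and the earlier-level tolerances are small), and apply Lemma~\ref{lm-implicitsmallchangeclose} on each slice with base point $(\whb_{k,d_k},\whb_k|_{J_k})$ to obtain $\whw_k$, with (P1) preserved because the perturbed Jacobian determinant stays bounded away from zero. The only small imprecision is your claim $\|y_0-g(\whb_{k,d_k})\|_\infty\le\delta$: since $g(\whb_{k,d_k})$ differs from $g(b_{k,d_k})=b_k|_{J_k}$ by a continuity term, the correct bound is $\delta$ plus a term made small via uniform continuity of $w_k$ on the compact set $V_{k-1}$ (the paper's triangle inequality), which your compactness remarks already supply.
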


\begin{proof}
Let $k_0$ be such that $S$ is $k_0$-finite. 
We construct the stabilizers inductively for $k\in [k_0]$.
Suppose that we have constructed $\whw_1,\whw_2,\ldots,\whw_{k-1}$ such that
for each $i \in [k-1]$ and $x\in V_{i-1}\times U_i$,
$\|\whw_i(x)-\whw'_i(x)\|_\infty < \eps'$, where the value of $\eps'<\eps$ will be chosen later.
For $(x,y) \in U_k\times [0,1]^{J_k}$,
let $a(x,y) \in [0,1]^{k+1}$ be the vector with $a(x,y)_i=y_i$ for $i \in J_k$, $a(x,y)_{d_k}=x$, and $a(x,y)_i=b_{k,i}$ otherwise.
Define $\wha(x,y)$ analogously using $\whb_{k,i}$.
Since $V_{k-1}$ is compact, there exists a constant $L$ such that for every $\wx\in V_{k-1}$
the function $f_{\wx}:U_k\times [0,1]^{J_k}\to [0,1]^{I_k}$, defined as
\[f_{\wx}(x,y)=\left(t_{i}(\wtw_{\le k-1}(\wx),a(x,y),b_{>k})-t_{i}(\wtw_{\le k-1}(\wx),b_k,b_{>k})\right)_{i\in I_k}\]
has the property that
the function and each partial derivative is $L$-Lipschitz.
Similarly, define the function $\wf_{\wx}:U_k\times (0,1)^{J_k}\to [0,1]^{I_k}$, $\wx\in V_{k-1}$ to be
\[\wf_{\wx}(x,y)=\left(\wht_{i}(\whwtw_{\le k-1}(\wx),\wha(x,y),\whb_{>k})-\wht_{i}(\whwtw_{\le k-1}(\wx),\whb_k,\whb_{>k})\right)_{i\in I_k}.\]

We claim that for any $\delta'$,
we can take $\delta$ and $\eps'$ small enough so that for each $\wx \in V_{k-1}$, $\wf_{\wx}$ is $\delta'$-close to $f_{\wx}$.
Fix $i\in [k]$.
Since $t_i$ is continuous on the product topology on $[0,1]^\NN$,
it is uniformly continuous with respect to the metric $d(x,y)=\sum_{n\in\NN} 2^{-n} |x_n-y_n|$.
In particular, this means that there exists $\eps''>0$ so that
if $||x-y||_{\infty}<\eps''$, $x,y\in [0,1]^\NN$, then
\begin{equation}
|t_i(x)-t_i(y)| <\delta'/4.\label{eq-delta3}
\end{equation}
We next obtain the following estimate.
\begin{align*}
&\left|\wht_{i}(\whwtw_{\le k-1}(\wx),\wha(x,y),\whb_{>k})
-t_{i}(\wtw_{\le k-1}(\wx),a(x,y),b_{>k})\right| \\
\le &\left|\wht_{i}(\whwtw_{\le k-1}(\wx),\wha(x,y),\whb_{>k})
-t_{i}(\whwtw_{\le k-1}(\wx),\wha(x,y),\whb_{>k})\right|\\
+&\left|t_{i}(\whwtw_{\le k-1}(\wx),\wha(x,y),\whb_{>k})
-t_{i}(\wtw_{\le k-1}(\wx),a(x,y),b_{>k})\right|
\le \delta + \delta'/4. 
\end{align*}
Similarly, we have
\begin{align*}
&\left|\wht_{i}(\whwtw_{\le k-1}(\wx),\whb_k,\whb_{>k})
-t_{i}(\wtw_{\le k-1}(\wx),b_k,b_{>k})\right| \\
\le &\left|\wht_{i}(\whwtw_{\le k-1}(\wx),\whb_k,\whb_{>k})
-t_{i}(\whwtw_{\le k-1}(\wx),\whb_k,\whb_{>k})\right|\\
+&\left|t_{i}(\whwtw_{\le k-1}(\wx),\whb_k,\whb_{>k})
-t_{i}(\wtw_{\le k-1}(\wx),b_k,b_{>k})\right|
\le \delta + \delta'/4. 
\end{align*}
Hence, if $\eps'<\eps''$ and $\delta<\delta'/4$, we obtain that
\begin{align*}
\left|f_{\wx}(x,y)_i-\wf_{\wx}(x,y)_i\right|\le 
\left|\wht_{i}(\whwtw_{\le k-1}(\wx),\wha(x,y),\whb_{>k})
-t_{i}(\wtw_{\le k-1}(\wx),a(x,y),b_{>k})\right|\\
+ \left|\wht_{i}(\whwtw_{\le k-1}(\wx),\whb_k,\whb_{>k})-t_{i}(\wtw_{\le k-1}(\wx),b_k,b_{>k})\right| \le \delta'.
\end{align*}
An analogous argument applies to each partial derivative of $t_i$.

Let $T_k(\wx,z)$, $\wx\in V_{k-1}$ and $z\in [0,1]^{k+1}$, be
the determinant of the submatrix of $M_k(\wtw_{\le k-1}(\wx),z,b_{>k})$ formed by the entries indexed by $I_k \times J_k$.
Since the set $V_{k-1}\times \overline{N}_{\eps,J_k}( \wtw_k(V_{k-1}\times U_k))$ is closed for every $\eps>0$,
we can decrease $\eps>0$ (note that this strengthens the conclusion of the lemma) so that $T_k^2$
has a positive minimum $D^2$ on the set $V_{k-1}\times \overline{N}_{\eps,J_k}( \wtw_k(V_{k-1}\times U_k))$,
i.e., $|T_k|$ is at least $D$ on this set, for a positive real $D$.
We apply Lemma~\ref{lm-implicitsmallchangeclose} with $n=|J_k|$, $L$, $D$, $\eps$ as above, and $[a,b]=U_k$
to obtain $\delta'$ and $\eps_0$.
We then make sure that $\delta$ and $\eps'$
are small enough so that 
$\wf_{\wx}$ is $\delta'$-close to $f_{\wx}$ for every $\wx \in V_{k-1}$.

Using the triangle inequality, we get the following estimate:
\begin{align*}
\|\whb_{k}-\wtw_k(\wx,\whb_{k,d_k})\|_\infty \le &  \|\whb_{k}-b_{k}\|_\infty+\|b_{k}-\wtw_k(\wx,b_{k,d_k})\|_\infty\\
 & +\|\wtw_k(\wx,b_{k,d_k})-\wtw_k(\wx,\whb_{k,d_k})\|_\infty.
\end{align*}
The first summand on the right hand side is at most $\delta$, the middle one is zero and
the last summand can be made arbitrarily small by choosing $\delta$ small enough ($\delta$ that
is universal for all choices of $\wx\in V_{k-1}$ exists since $V_{k-1}$ is compact).
Hence, by choosing $\delta$ small enough, we can guarantee that $\|\whb_{k}-w_k(\whb_{k,d_k})\|_\infty \le \eps_0$.
Since $f_{\wx}(w_k(x))=0$ for every $x\in U_k$,
Lemma~\ref{lm-implicitsmallchangeclose} implies that
there exists $g_{\wx}:U_k\to [0,1]^{J_k\cup\{d_k\}}$ such that
$\wf_{\wx}(g_{\wx}(x))=0$ and $\|w_k(\wx,x)-g_{\wx}(x)\|\le \eps$ for all $x\in U_k$.
We set $\whw_k(\wx,x)=g_{\wx}(x)$ for $(\wx,x)\in V_{k-1} \times U_k$.
\end{proof}

We next consider stabilizing systems with a stronger property.
We say that a stabilizing system $(U_k,V_k,I_k,J_k,d_k,b_k,w_k)_{k\in\NN}$ that
stabilizes totally analytic functions $t_k$, $k\in\NN$, is \emph{$m$-excellent}
if the rank of the matrix $M_k(\wtw_{\le k}(x),b_{>k})$ is equal to $|J_k|$ for all $x\in V_{k-1}\times U_k$ and $k\in [m]$.
Note that the definition of stabilizing implies that the submatrix of $M_k(\wtw_{\le k}(x),b_{>k})$
formed by the entries indexed by $I_k \times J_k$ has rank $|J_k|$,
so we require that this submatrix has the same rank as the whole matrix $M_k(\wtw_{\le k}(x),b_{>k})$.

We next prove two auxiliary lemmas on excellent stabilizing systems.

\begin{lemma}
\label{lm-excellent-constant}
Let $S$ be an $m$-excellent stabilizing system that
stabilizes totally analytic functions $t_k$, $k\in\NN$.
It holds that
\[t_{\ell}(\wtw_{\le k}(x),b_{>k})=t_{\ell}(\wtw_{\le k-1}(x_1,\ldots,x_{k-1}),b_k,b_{>k})\]
for all $x\in V_{k-1}\times U_k$, $\ell\in [k]$ and $k\in [m]$.
\end{lemma}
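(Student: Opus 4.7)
The plan is to fix $k\in[m]$ and show that the function $F_\ell(x):=t_\ell(\wtw_{\le k}(x),b_{>k})$, defined on $V_{k-1}\times U_k$, is independent of $x_k$ for every $\ell\in[k]$. Once this is established, plugging in $x_k=b_{k,d_k}$ gives $\wtw_k(x_1,\ldots,x_{k-1},b_{k,d_k})=b_k$ directly from the stabilizing-system axioms: the $J_k$-components agree by the boundary condition $w_k(\cdot,b_{k,d_k})_i=b_{k,i}$, the $d_k$-component equals $b_{k,d_k}$ because $w_k(x)_{d_k}=x_k$, and the remaining coordinates of the extended stabilizer are already filled with the corresponding $b_{k,i}$. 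This yields the desired identity $F_\ell(x)=t_\ell(\wtw_{\le k-1}(x_1,\ldots,x_{k-1}),b_k,b_{>k})$.

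To show $\partial F_\ell/\partial x_k\equiv 0$, I would differentiate through $\wtw_k$. Only the coordinates indexed by $J_k\cup\{d_k\}$ depend on $x_k$; the $d_k$-coordinate contributes a derivative of $1$, and each $j\in J_k$ contributes $\partial w_k(x)_j/\partial x_k$. Collecting these into a vector $v(x)\in\RR^{J_k\cup\{d_k\}}$ (with $v_{d_k}(x)=1$), the chain rule gives
\[\frac{\partial F_\ell}{\partial x_k}(x)=\sum_{j\in J_k\cup\{d_k\}}M_k(\wtw_{\le k}(x),b_{>k})_{\ell,j}\cdot v_j(x),\]
that is, the inner product of the $\ell$-th row of $M_k$ restricted to the columns indexed by $J_k\cup\{d_k\}$ with the vector $v(x)$. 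Property (P2) of the stabilizing system states precisely that this inner product vanishes for every $\ell\in I_k$.

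To extend this from $\ell\in I_k$ to all $\ell\in[k]$, I would invoke $m$-excellence. By (P1) the $I_k\times J_k$ submatrix of $M_k(\wtw_{\le k}(x),b_{>k})$ is invertible, so the $|I_k|=|J_k|$ rows of $M_k$ indexed by $I_k$ are linearly independent, while the full matrix has rank $|J_k|$ by the excellence assumption. Hence the rows indexed by $I_k$ form a basis of the row space of $M_k$, and every row $\ell\in[k]$ can be written pointwise as $\sum_{i\in I_k}\alpha_{\ell,i}(x)\cdot M_k(\cdots)_{i,\cdot}$. Substituting this into the previous display yields $\partial F_\ell/\partial x_k(x)=\sum_{i\in I_k}\alpha_{\ell,i}(x)\cdot\partial F_i/\partial x_k(x)=0$. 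Since $U_k$ is a non-degenerate interval, constancy of $F_\ell$ along the $x_k$-direction follows.

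The only genuinely novel step is the linear-algebra passage from $I_k$ to all of $[k]$ powered by excellence; it is worth stressing that the coefficients $\alpha_{\ell,i}(x)$ need not be smooth or even globally defined, as we only require their pointwise existence to deduce the pointwise vanishing of $\partial F_\ell/\partial x_k$. The rest is routine chain rule together with the axiomatic boundary conditions on the stabilizers $w_k$.
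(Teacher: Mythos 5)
Your proposal is correct and takes essentially the same route as the paper's proof: the chain rule in the $x_k$-direction, the rank-$|J_k|$ excellence condition to write the $\ell$-th row of $M_k(\wtw_{\le k}(x),b_{>k})$ pointwise as a combination of the rows indexed by $I_k$, and property (P2) (differentiated) to make $\partial F_i/\partial x_k$ vanish for $i\in I_k$, hence for all $\ell\in[k]$. The only cosmetic difference is that you spell out the final evaluation at $x_k=b_{k,d_k}$, where $\wtw_k(x_1,\ldots,x_{k-1},b_{k,d_k})=b_k$, which the paper leaves implicit.
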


\begin{proof}
Fix integers $k\in [m]$ and $\ell\in [k]$.
Since the rank of matrix $M_k(\wtw_{\le k}(x),b_{>k})$ is equal to $|J_k|$ for every $x\in V_{k-1}\times U_k$,
there exist functions $c_{i}:V_{k-1} \times U_k \to \RR$, $i \in I_k$, such that
\[\frac{\partial}{\partial \vecz{a}_{k,j}}t_\ell(\wtw_{\le k}(x),b_{>k})= \sum_{i \in I_k} c_{i}(x) \frac{\partial}{\partial \vecz{a}_{k,j}}t_{i}(\wtw_{\le k}(x),b_{>k})\]
for every $j\in [k+1]$.
Let $F_i(x)=t_i(\wtw_{\le k}(x),b_{>k})$, $i\in [k]$.
We now analyze the derivative of the function $F_{\ell}(x)$:
\begin{align*}
\frac{\partial}{\partial x_k} F_{\ell}(x)
  & =  \sum_{j=1}^{k+1} \frac{\partial}{\partial \vecz{a}_{k,j}}t_{\ell}(\wtw_{\le k}(x),b_{>k}) \frac{\partial}{\partial x_k}(\wtw_{\le k}(x))_j\\
  & =  \sum_{i \in I_k} \sum_{j=1}^{k+1} c_{i}(x) \frac{\partial}{\partial \vecz{a}_{k,j}}t_{i}(\wtw_{\le k}(x),b_{>k}) \frac{\partial}{\partial x_k}(\wtw_{\le k}(x))_j\\
  & =  \sum_{i \in I_k} c_{i}(x) \frac{\partial}{\partial x_k} F_{i}(x).
\end{align*}
Since the system $S$ stabilizes all functions $t_{i}$ with $i\in I_k$,
it follows that $\frac{\partial}{\partial x_k} F_{i}(x)=0$ for all $x\in V_{k-1}\times U_k$,
which implies that $\frac{\partial}{\partial x_k} F_{\ell}(x)=0$.
We conclude that
\[t_{\ell}(\wtw_{\le k}(x),b_{>k})=t_{\ell}(\wtw_{\le k-1}(x_1,\ldots,x_{k-1}),b_k,b_{>k})\]
for all $x\in V_{k-1}\times U_k$.
\end{proof}

\begin{lemma} \label{lm-makeexcellent}
Let $S=(U_k,V_k,I_k,J_k,d_k,b_k,w_k)_{k\in\NN}$ be a $c$-strong stabilizing system that
stabilizes totally analytic functions $t_k$, $k\in\NN$, and
that is $(m-1)$-excellent but not $m$-excellent.
For any $c'<c$ and $\eps>0$,
there exists a $c'$-strong $m$-finite stabilizing system $S'=(U'_k,V'_k,I'_k,J'_k,d'_k,b'_k,w'_k)_{k\in\NN}$ such that $b_k=b'_k$ for $k\not=m$,
$U'_k=U_k$, $I'_k=I_k$, $J'_k=J_k$ and $d'_k=d_k$ for $k\in [m-1]$,
$\|w_k(x)-w'_k(x)\|_\infty\le\eps$ for any $x\in V'_{k-1}\times U'_k$, $k\in [m-1]$,
$I_m \subsetneq I_m'$, $J_m \subsetneq J_m'$, $\|b_{m}-b'_{m}\|_\infty\le\eps$, and
$U'_k\subseteq [b'_{k,d'_k}-\eps,b'_{k,d'_k}+\eps]$ for $k\ge m$.
In addition, 
it also holds that
$||\wtw'_m(x)-b'_m||_{\infty}\le\eps$ for every $x\in V'_{m-1}\times U'_m$.
\end{lemma}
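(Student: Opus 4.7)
The plan is to modify $S$ primarily at level $m$, using the failure of $m$-excellence to locate a larger invertible submatrix of $M_m$, then building a new stabilizer $w'_m$ by the implicit function theorem; levels below $m$ receive only a small correction via Lemma~\ref{lm-smallchangestabilize}, and all levels above $m$ are trivialized. Since $S$ is not $m$-excellent and the $I_m\times J_m$ submatrix of $M_m$ is always invertible, at some $x_0\in V_{m-1}\times U_m$ the matrix $M_m(\wtw_{\le m}(x_0),b_{>m})$ has rank at least $|J_m|+1$. Pick a $(|J_m|+1)\times(|J_m|+1)$ minor extending $I_m\times J_m$ that is nonzero there, singling out $i^*\in[m]\setminus I_m$ and $j^*\in[m+1]\setminus J_m$, and set $I'_m = I_m\cup\{i^*\}$; if $j^*\ne d_m$ set $J'_m = J_m\cup\{j^*\}$ and $d'_m = d_m$, otherwise set $J'_m = J_m\cup\{d_m\}$ and pick $d'_m$ to be any element of $[m+1]\setminus J'_m$ (nonempty since $|J'_m|\le m$). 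Let $\Delta(x^*,c)$ be the determinant of the resulting $I'_m\times J'_m$ submatrix of $M_m$ evaluated at $(\wtw_{\le m-1}(x^*),c,b_{>m})$; it is analytic on $V_{m-1}\times[0,1]^{m+1}$ and nonzero at $(x_0|_{V_{m-1}},\wtw_m(x_0))$. If there were an open neighborhood of $b_m$ in $[0,1]^{m+1}$ on which $\Delta(x^*,c)=0$ for every $x^*\in V_{m-1}$, analyticity of $\Delta(x^*,\cdot)$ on the connected set $[0,1]^{m+1}$ would force $\Delta\equiv 0$, a contradiction; so we can pick $b'_m\in(0,1)^{m+1}$ arbitrarily close to $b_m$ such that $\Delta(\cdot,b'_m)$ is not identically zero on $V_{m-1}$.

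Since $S$ need not be finite, form the $m$-finite truncation $S^{\mathrm{trunc}}$ that agrees with $S$ through level $m$ and for $k>m$ uses $I_k = J_k = \emptyset$, $d_k = 1$, $b_k$ as in $S$, $V_k = V_{k-1}\times U_k$, and $w_k(x) = x_k$; this is a $c$-strong $m$-finite stabilizing system for the $t_k$. Applying Lemma~\ref{lm-smallchangestabilize} to $S^{\mathrm{trunc}}$ with the perturbed base point $b'_m$ at level $m$ (other base points and all $t_k$ unchanged) produces new stabilizers, which we denote $u_k$, with $\|u_k-w_k\|_\infty\le\eps$ such that the resulting system stabilizes $t_k$; let $\tilde u_{\le m-1}$ denote the induced extended stabilizers. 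Consider the analytic function $\Delta^{\mathrm{new}}(x^*):=\Delta(\tilde u_{\le m-1}(x^*),b'_m,b'_{>m})$ on $V_{m-1}$. By taking the perturbation parameter in Lemma~\ref{lm-smallchangestabilize} small enough, $\Delta^{\mathrm{new}}$ is close to $\Delta(\wtw_{\le m-1}(\cdot),b'_m,b_{>m})$, which is not identically zero by the choice of $b'_m$, so $\Delta^{\mathrm{new}}(x^*_1)\ne 0$ for some $x^*_1\in V_{m-1}$. Lemma~\ref{lm-Vkshrink} with $\ell=m-1$ and $T=\Delta^{\mathrm{new}}$ then yields $c'$-strong closed subsets $V'_k\subseteq V_k$ for $k\in[m-1]$ on which $\Delta^{\mathrm{new}}$ is nowhere zero.

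Set $U'_m=[b'_{m,d'_m}-\eps',b'_{m,d'_m}+\eps']\subseteq(0,1)$ for a small $\eps'\le\eps$, and define $w'_m$ on $V'_{m-1}\times U'_m$ by the implicit function theorem applied to
\begin{equation*}
t_i\bigl(\tilde u_{\le m-1}(x^*),\,a(x_m,y),\,b'_{>m}\bigr) = t_i\bigl(\tilde u_{\le m-1}(x^*),\,b'_m,\,b'_{>m}\bigr),\qquad i\in I'_m,
\end{equation*}
where $a(x_m,y)$ has $x_m$ in coordinate $d'_m$, entries $y_j$ in coordinates $j\in J'_m$, and $b'_{m,j}$ in the remaining coordinates: the Jacobian with respect to $y$ is $\Delta^{\mathrm{new}}$, nonzero throughout $V'_{m-1}$, and at $x_m=b'_{m,d'_m}$ the trivial solution is $y=(b'_{m,j})_{j\in J'_m}$, so shrinking $\eps'$ extends the analytic solution uniformly over $V'_{m-1}\times U'_m$ while keeping it $\eps$-close to $b'_m|_{J'_m}$. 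Set $w'_m(x^*,x_m)_{d'_m}=x_m$ and $w'_m(x^*,x_m)_j=y_j(x^*,x_m)$ for $j\in J'_m$; for $k<m$ take $w'_k=u_k$ restricted to $V'_{k-1}\times U_k$ and keep the other data of $S$; for $k>m$ use trivial data ($I'_k=J'_k=\emptyset$, $d'_k=1$, $b'_k=b_k$, $w'_k(x)=x_k$) with $V'_k=V'_{k-1}\times U'_k$ and small closed intervals $U'_k\subseteq[b'_{k,1}-\eps,b'_{k,1}+\eps]$. The remaining verifications are routine: properties (P1) and (P2) at levels $k<m$ are inherited from the conclusion of Lemma~\ref{lm-smallchangestabilize}, (P1) at level $m$ holds because $\Delta^{\mathrm{new}}$ is nowhere zero on $V'_{m-1}\times U'_m$, (P2) at level $m$ is built into the implicit-function construction, and the $c'$-strongness and closeness requirements follow from Lemma~\ref{lm-Vkshrink} and the choices of $\eps'$ and the perturbation parameter. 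The main obstacle is that non-$m$-excellence only guarantees the rank-jump of $M_m$ at a single, possibly distant, point, so an invertible minor is not automatically available near the natural base $b_m$; the density argument for choosing $b'_m$, together with Lemma~\ref{lm-smallchangestabilize} to restore the stabilizing conditions at lower levels after perturbing $b_m$, is what bridges this gap.
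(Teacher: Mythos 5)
Your proposal is correct and follows essentially the same route as the paper: witness the rank jump of $M_m$, move the base point to a nearby $b'_m$ where the enlarged minor is not identically zero, re-derive the lower-level stabilizers via Lemma~\ref{lm-smallchangestabilize}, shrink the sets $V_k$ with Lemma~\ref{lm-Vkshrink} so that the enlarged determinant is nowhere zero on $V'_{m-1}$, build $w'_m$ by the implicit function theorem on a small interval around $b'_{m,d'_m}$, and trivialize all levels above $m$. The only deviations are cosmetic: you fix $I'_m,J'_m$ at the original witnessing point and locate $b'_m$ by an identity-theorem argument in the $\vecz{a}_m$-variables, whereas the paper takes $b'_m=\wtw_m(x)$ with $x_m$ close to $b_{m,d_m}$ (one-variable analyticity along the stabilizer graph) and chooses the enlarged index sets after the perturbation, and you explicitly truncate $S$ to meet the finiteness hypothesis of Lemma~\ref{lm-smallchangestabilize}, a point the paper passes over.
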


\begin{proof}
Recall that $M_m(\vecz{a})$ is the $m \times (m+1)$ Jacobian matrix of the partial derivatives of the functions $t_1,t_2,\ldots,t_m$
with respect to the variables $\vecz{a}_{m,i}$, $i\in [m+1]$.
Let $r$ be the maximum rank of $M_m(\vecz{a})$ taken over all $\vecz{a}=(\wtw_{\le m}(x),b_{>m})$, $x\in V_{m-1}\times U_m$.
Since the stabilizing system is not $m$-excellent, we have that $r>|J_m|$.
Let $x\in V_{m-1}\times U_m$ be a point where the rank $M_m(\vecz{a})$ is equal to $r$;
by the analyticity of the functions $t_1,\ldots,t_m$,
we can choose $x$ such that $x_m$ is arbitrarily close to $b_{m,d_m}$.
Set $b'_m=w_m(x_m)$ and let $w'_1,\ldots,w'_{m-1}$ be the stabilizers for the system $S$ with $b_m$ replaced with $b'_m$.
By Lemma \ref{lm-smallchangestabilize}, if $b'_m$ is sufficiently close to $b_m$ (which can be arranged
by choosing $x_m$ sufficiently close to $b_{m,d_m}$),
the stabilizers $w'_1,\ldots,w'_{m-1}$ exist and
it holds that $\|w_k(x)-w'_k(x)\|_\infty\le\eps$ for any $x\in V_{k-1}\times U_k$, $k\in [m-1]$.
Moreover, if $\eps$ is sufficiently small,
the rank of the matrix $M_m(\wtw'_{\le m-1}(x_1,\ldots,x_{m-1}),b'_m,b_{>m})$ stays at least $r$ and
the rank of its submatrix formed by the entries indexed by $I_m \times J_m$ stays $|J_m|$.

Let $I'_m\supseteq I_m$ and $J'_m\supseteq J_m$ be subsets of $[m]$ and $[m+1]$, respectively, such that
the rank of the submatrix of $M_m(\wtw'_{\le m-1}(x_1,\ldots,x_{m-1}),b'_m,b_{>m})$ formed by the entries indexed by $I'_m\times J'_m$ is $r$.
Choose $d'_m\in [m+1]\setminus J'_m$ arbitrarily and
let $T_m(x)$ be the determinant of the submatrix of $M_m(\wtw'_{\le m-1}(x),b'_m,b_{>m})$, $x\in V_{m-1}$.
Set $U_k'=U_k$ for $k\in [m-1]$, $U_m'=[b'_{m,d'_m}-\eps,b'_{m,d'_m}+\eps]\cap [0,1]$, and $U_k'=[b_{k,1}-\eps,b_{k,1}+\eps]\cap [0,1]$, otherwise.
By Lemma \ref{lm-Vkshrink}, there exist closed subsets $V_k' \subseteq V_k$, $k\in [m-1]$, such that
the variable system $(U_k',V_k')_{k\in\NN}$ with $V'_k=V'_{k-1}\times U'_k$ for $k\ge m$ is $c'$-strong and
$T_m(x)\not=0$ for all $x\in V'_{m-1}$.
By the Implicit Function Theorem and the compactness of $V_{k-1}'$,
there exists a non-trivial closed interval $U\subseteq U_m'$ containing $b'_{m,d'_m}$ such that
a stabilizer $w'_m$ exists on $V'_{m-1}\times U$ and
$||\wtw'_m(x)-b'_m||_{\infty}\le\eps$ for every $x\in V'_{m-1}\times U$.
We replace $U_m'$ with $U$ and set $I'_k=I_k$, $J'_k=J_k$ and $d'_k=d_m$ for $k\in [m-1]$.
We also set $I'_k=\emptyset$, $J'_k=\emptyset$, $d'_k=1$ and $w'_k(z):=z$ for $k>m$, and $b'_k=b_k$ for $k\not=m$.
We have obtained an $m$-finite stabilizing system $S'=(U'_k,V'_k,I'_k,J'_k,d'_k,b'_k,w'_k)_{k\in\NN}$
with $J'_k=J_k$ for $k\in [m-1]$ and $J_m \subsetneq J_m'$,
completing the proof.
\end{proof}

We next define a partial order on sequences of finite subsets of integers.
Given two such sequences $\cJ=(J_k)_{k \in \NN}$ and $\cJ'=(J_k')_{k\in \NN}$,
we say that $\cJ \prec \cJ'$ if there exists an integer $m$ such that
\begin{itemize}
	\item $J_k=J_k'$ for $k<m$, and
	\item $J_{m} \subset J_{m}'$, i.e., $J_{m}$ is a proper subset of $J'_{m}$.
\end{itemize}
In other words, we consider the partial order defined
by the lexicographic order together with the partial order given by subset containment.
We will also write $\cJ \preceq \cJ'$ if $\cJ \prec \cJ'$ or $\cJ=\cJ'$.

We are now ready to prove the main lemma of this section.

\begin{lemma} \label{lm-make-excellent}
Suppose that $S=(U_k,V_k,I_k,J_k,d_k,b_k,w_k)_{k\in\NN}$ is a $c$-strong stabilizing system that
stabilizes totally analytic functions $t_k$, $k\in\NN$.
Let $\eps>0$, $c'\in (0,c)$ and $m\in\NN$.
There exists a $c'$-strong $m$-excellent stabilizing system $S'=(U'_k,V'_k,I'_k,J'_k,d'_k,b'_k,w'_k)_{k\in\NN}$
that stabilizes the functions $t_k$, $k\in\NN$, and
that satisfies $(J_k)_{k \in \NN}\preceq (J_k')_{k\in \NN}$ and
$\|b_{k}-b'_{k}\|_\infty<\eps$ for all $k\in\NN$.

Moreover, the following holds.
Let $k_0$ be the largest integer such that $J_k'=J_k$ for all $k\in [k_0]$.
For every $k\in [k_0]$, it holds that
$I_{k}'=I_{k}$, $d_{k}'=d_{k}$, $U'_{k}=U_{k}$, $V'_{k}\subseteq V_{k}$, and
$\|w_{k}(x)-w'_{k}(x)\|_\infty<\eps$ for any $x\in V'_{k-1}\times U'_{k}$.
It also holds that
$||\wtw'_k(x)-b'_k||_{\infty}<\eps$ for every $x\in V'_{k-1}\times U'_k$ and $k>k_0$.
\end{lemma}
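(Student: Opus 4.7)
The plan is to iterate Lemma~\ref{lm-makeexcellent} a bounded number of times. Initialize $S^{(0)}=S$ and $c^{(0)}=c$. At step $i$, if $S^{(i)}$ is $m$-excellent we set $S'=S^{(i)}$ and stop; otherwise let $m^{(i)}\in [m]$ be the smallest index at which $S^{(i)}$ fails to be $m^{(i)}$-excellent. By minimality $S^{(i)}$ is $(m^{(i)}-1)$-excellent, so Lemma~\ref{lm-makeexcellent} applies with parameters $c^{(i+1)}\in(c^{(i)}-\gamma^{(i)},c^{(i)})$ and tolerance $\eps^{(i)}$, yielding a $c^{(i+1)}$-strong $m^{(i)}$-finite stabilizing system $S^{(i+1)}$ stabilizing $t_k$, $k\in\NN$, that agrees with $S^{(i)}$ at positions below $m^{(i)}$ up to perturbations of size at most $\eps^{(i)}$ and strictly enlarges $J_{m^{(i)}}$.

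The central observation is that each iteration strictly advances the truncated sequence $(\overline{J}^{(i)}):=(J_1^{(i)},\ldots,J_m^{(i)})$ in the order $\prec$: the lemma gives $J_k^{(i+1)}=J_k^{(i)}$ for $k<m^{(i)}$ and $J_{m^{(i)}}^{(i)}\subsetneq J_{m^{(i)}}^{(i+1)}$, so $(\overline{J}^{(i)})\prec(\overline{J}^{(i+1)})$ regardless of what happens at positions larger than $m^{(i)}$. Since each $J_k$ is constrained to lie in $2^{[k+1]}$, the truncated sequence ranges over a finite partial order of cardinality at most $N_0:=\prod_{k=1}^{m}2^{k+1}$, and any strictly $\prec$-increasing chain in a finite partial order has length at most its cardinality. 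Hence the process terminates within $N\le N_0$ iterations, and the final $S'$ is by construction $m$-excellent.

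To keep all errors within the prescribed budgets, we fix $N=N_0$ a priori and choose $\eps^{(i)}=\eps\cdot 2^{-(i+1)}$ and $\gamma^{(i)}=(c-c')\cdot 2^{-(i+1)}$. Then $\sum_i\gamma^{(i)}<c-c'$, so every intermediate system is $c^{(i)}$-strong for some $c^{(i)}>c'$, and the accumulated perturbations of $b_k$ and of the stabilizers $w_k$ at any fixed position stay below $\sum_i\eps^{(i)}<\eps$. If needed, the $\eps^{(i)}$ can be further decreased to meet the smallness assumptions internal to each invocation of Lemma~\ref{lm-makeexcellent}, which is always possible.

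The \emph{moreover} conclusion is read off from the iteration. Let $k_0$ be as in the statement. Since no iteration alters $J_k$ for $k<m^{(i)}$ and $k_0$ is by definition the largest position with $J_k$ unchanged over the entire process, every iteration index satisfies $m^{(i)}>k_0$. Therefore Lemma~\ref{lm-makeexcellent} keeps $I_k$, $J_k$, $d_k$, $U_k$, $b_k$ fixed and perturbs $w_k$ by at most $\eps^{(i)}$ for every $k\le k_0$; the inclusions $V^{(i+1)}_k\subseteq V^{(i)}_k$ accumulate to $V'_k\subseteq V_k$, and the summed perturbation of $w_k$ stays below $\eps$. For $k>k_0$, the last iteration in which position $k$ was affected---either $m^{(i)}=k$, where the explicit \emph{moreover} clause of Lemma~\ref{lm-makeexcellent} gives $\|\wtw_k-b_k\|_\infty\le\eps^{(i)}$, or $m^{(i)}<k$, where $m^{(i)}$-finiteness forces $U^{(i+1)}_k\subseteq[b_{k,1}-\eps^{(i)},b_{k,1}+\eps^{(i)}]$ and $w_k^{(i+1)}$ to act as the identity on its first coordinate---yields the required bound $\|\wtw'_k(x)-b'_k\|_\infty<\eps$. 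The only substantive obstacle is the termination argument: although $\prec$ is only a partial order, its restriction to the finite set of truncated sequences $(\overline{J})$ is well-founded, which is what makes the iteration halt.
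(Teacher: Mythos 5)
Your proposal is correct and follows essentially the same route as the paper: iterate Lemma~\ref{lm-makeexcellent} at the least non-excellent index, use the strict $\prec$-increase of $(J_1,\ldots,J_m)$ within a finite set to bound the number of iterations, and split the $\eps$- and strength-budgets over the iterations (the paper uses the uniform split $\eps/(L+1)$, $(c-c')/(L+1)$ with $L=(m+1)!$ instead of your geometric split and cruder bound $\prod_{k\le m}2^{k+1}$, but this is immaterial). Your treatment of the ``moreover'' clause, including the observation that every iteration index exceeds $k_0$, matches the paper's (equally brief) argument.
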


\begin{proof}
Set $L=(m+1)!$, $\eps'=\eps/(L+1)$ and $\delta=(c-c')/(L+1)$.
We keep successively applying Lemma~\ref{lm-makeexcellent}.
At each step, we choose the smallest $m'\le m$ such that the current stabilizing system is not $m'$-excellent and
apply Lemma~\ref{lm-makeexcellent} with $\eps'$ and $c-i\delta$ in the $i$-th step.
Note that in each step the sequence $(J_k)_{k \in \NN}$ increases in the order defined by $\prec$ and
this increase is witnessed by $J_k$ with \mbox{$k\le m'\le m$}.
It is not difficult to see that there can be at most $L$ increases in total.
Hence, the whole procedure terminates after at most $L$ steps
with an $m$-excellent stabilizing system $S'=(U'_k,V'_k,I'_k,J'_k,d'_k,b_k,w_k)_{k\in\NN}$.
By the choice of $\delta$, the resulting system is $c'$-strong, and
the choice of $\eps'$ implies that $\|b_{k}-b'_{k}\|_\infty<\eps$ for all $k\in\NN$.
Moreover, if the first $k_0$ elements of the sequence $(J_k)_{k \in \NN}$ stayed the same,
i.e., $J_k'=J_k$ for all $k\in [k_0]$,
then we only applied Lemma \ref{lm-makeexcellent} with $m'>k_0$,
therefore, $U'_{k_0}=U_{k_0}$, $V'_{k_0}\subseteq V_{k_0}$, and
$\|w_{k_0}(x)-w'_{k_0}(x)\|_\infty\le L\eps'<\eps$ for any $x\in V'_{k_0-1}\times U'_{k_0}$.
\end{proof}

\section{Main result}
\label{sec-main}

We are now ready to prove the main theorem.

\begin{theorem}
\label{thm-main}
There exists a family of graphons $\WW$, graphs $H_1,\ldots,H_{\ell}$ and reals $d_1,\ldots,d_{\ell}$ such that
\begin{itemize}
\item a graphon $W$ is weakly isomorphic to a graphon contained in $\WW$ if and only if $d(H_i,W)=d_i$ for every $i\in [\ell]$, and
\item no graphon in $\WW$ is finitely forcible, i.e., for all graphs $H'_1,\ldots,H'_r$ and reals $d'_1,\ldots,d'_{r}$,
      the family $\WW$ contains either zero or infinitely many graphons $W$ with $d(H'_i,W)=d'_i$, $i\in [r]$.
\end{itemize}
\end{theorem}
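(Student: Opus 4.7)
The plan is to take $\WW := \WW_P$ for a bounding sequence $P$ obtained as the limit of an inductive construction, invoking Theorem~\ref{thm-WPforced} for the first bullet (after a standard rewriting of the polynomial relation $q=0$ as a conjunction of density equalities; for example, by arranging that the densities of the $H_i$ are constant on $\WW_P$ and that $q$ can be taken as $\sum_i(d(H_i,W)-d_i)^2$). The substantive content is the second bullet: we must build $P$ so that, for every $z$ realizing a graphon in $\WW$ and every finite list $H'_1,\ldots,H'_r$ of graphs, there are infinitely many $z'$ in the feasible set with $t_{P,H'_i}(z')=t_{P,H'_i}(z)$ for each $i$. Combined with Lemma~\ref{lm-densityanalytic} and the injectivity clause of Theorem~\ref{thm-WPforced}, such $z'$ yield infinitely many pairwise non-weakly-isomorphic graphons in $\WW$ sharing the specified densities, so no graphon in $\WW$ is finitely forcible.

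\textbf{Inductive construction.} Enumerate all finite graphs $H_1,H_2,\ldots$. Build inductively bounding sequences $P^{(0)}\prec P^{(1)}\prec\cdots$ (each a strengthening of the previous), positive reals $c_n\searrow c_\infty>0$, and $c_n$-strong, $n$-finite, $n$-excellent stabilizing systems $S^{(n)}$ that stabilize the totally analytic functions $t_{P^{(n)},H_1},\ldots,t_{P^{(n)},H_n}$ supplied by Lemma~\ref{lm-densityanalytic}. Starting from trivial data, pass from stage $n-1$ to $n$ as follows: (i) adjoin $t_{P^{(n-1)},H_n}$ as the $n$-th stabilized function, which is trivially stabilized by $S^{(n-1)}$ at level $n$ since $I_n=\emptyset$; (ii) apply Lemma~\ref{lm-make-excellent} with a small tolerance $\varepsilon_n$ and a slightly reduced strength $c_n<c_{n-1}$ to upgrade to an $n$-excellent stabilizing system; (iii) use Lemma~\ref{lm-WPvaries} to choose an index $k_n$ past which any strengthening perturbs each $t_{P,H_k}$, $k\le n$, by at most $\varepsilon_n/2$, and strengthen $P^{(n-1)}$ only beyond $k_n$ to obtain $P^{(n)}$, so that the feasible region tightens to track the image of the new stabilizing system while the density functions barely move; (iv) apply Lemma~\ref{lm-smallchangestabilize} to correct the stabilizers so that they stabilize the slightly perturbed $t_{P^{(n)},H_k}$'s. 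With $(\varepsilon_n)$ chosen summably and $c_n\ge c_\infty$ kept uniformly, the stabilizers, anchors, and feasible regions converge coordinatewise.

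\textbf{Extraction of the limit, conclusion, and main obstacle.} In the limit one obtains a bounding sequence $P$ and a stabilizing system $S^{(\infty)}$ on a variable system $(U_k,V_k^\infty)_{k\in\NN}$ which is $c_\infty$-strong (so each $V_k^\infty$ has positive measure) and $n$-excellent for every $n$. Now fix $W_P(z)\in\WW$, represent $z$ via some $x\in\prod_k U_k$, and fix any finite list $H'_1,\ldots,H'_r$ of graphs; choose $n$ with $\{H'_1,\ldots,H'_r\}\subseteq\{H_1,\ldots,H_n\}$. By Lemma~\ref{lm-excellent-constant} with $k=m=n$, as $x_n$ ranges over $U_n$ (with $x_1,\ldots,x_{n-1}$ fixed in $V_{n-1}^\infty$) one traces out a continuum of points $z'$ in the feasible set of $P$ along which every $t_{P,H_k}$, $k\le n$, is constant. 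By Lemma~\ref{lm-densityanalytic} the graphons $W_P(z')$ lie in $\WW$ and share each density $d(H'_i,\cdot)$ with $W_P(z)$, and by the injectivity clause of Theorem~\ref{thm-WPforced} they are pairwise non-weakly-isomorphic; hence no graphon in $\WW$ is finitely forcible. The main technical obstacle is precisely the inductive bookkeeping: each application of Lemmas~\ref{lm-make-excellent} and~\ref{lm-smallchangestabilize} perturbs already-committed stabilizers, anchors, and feasible regions, while Lemma~\ref{lm-WPvaries} controls but does not eliminate the dependence of the density functions on $P$. Orchestrating the tolerances $\varepsilon_n$, the strengths $c_n$, and the indices $k_n$ so that all perturbations to earlier-committed data are summable, so that each $V_k^\infty$ remains of positive measure, and so that the $U_k$ stay non-degenerate intervals, is where essentially all of the technical work resides, and is precisely what the machinery assembled in Section~\ref{sec-diag} is designed to enable.
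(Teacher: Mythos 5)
Your overall strategy is the same as the paper's: enumerate all graphs, inductively build strengthenings of the bounding sequence together with finite, excellent, $c$-strong stabilizing systems via Lemmas~\ref{lm-smallchangestabilize}, \ref{lm-make-excellent} and~\ref{lm-WPvaries}, strengthen only beyond a large index so that the density functions barely move while the feasible region is forced to track the image of the extended stabilizers, pass to the limit, and then combine constancy of the densities along later coordinates with the injectivity clause of Theorem~\ref{thm-WPforced} to produce infinitely many non-weakly-isomorphic graphons sharing any prescribed finite list of densities. However, your treatment of the first bullet contains a genuine gap. You set $\WW:=\WW_P$ and claim the relation $q(d(H_1,W),\ldots,d(H_\ell,W))=0$ can be rewritten as a conjunction of equalities $d(H_i,W)=d_i$, ``for example, by arranging that the densities of the $H_i$ are constant on $\WW_P$.'' No such rewriting exists in general: a single polynomial relation among the $\ell$ densities is a codimension-one condition on the density vector, while fixing each $d(H_i,W)$ pins the vector to a point, and the two describe the same class of graphons only if the vector $(d(H_1,W),\ldots,d(H_\ell,W))$ is constant over $\WW_P$. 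You neither prove this nor can it be ``arranged'': the graphs $H_1,\ldots,H_\ell$ are fixed by Theorem~\ref{thm-WPforced} before $P$ is chosen, and their densities genuinely vary with $z$ (they must, since the forcing constraints encode the values $z^{M_i}$ in the tile $C\times C$). The paper's fix is simple and you should adopt it: set $d_i:=d(H_i,W_P(b_1,b_2,\ldots))$ for the distinguished feasible point of the limit system and define $\WW:=\{W:\ d(H_i,W)=d_i \mbox{ for all } i\in[\ell]\}$; since these values satisfy $q(d_1,\ldots,d_\ell)=0$, Theorem~\ref{thm-WPforced} gives $\WW\subseteq\WW_P$ up to weak isomorphism, and the first bullet holds by definition. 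This forces one adjustment in your second-bullet argument: when choosing $n$, include $H_1,\ldots,H_\ell$ (not only $H'_1,\ldots,H'_r$) among the graphs whose densities are held fixed, so that the perturbed graphons stay in $\WW$ rather than merely in $\WW_P$.

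Two smaller points of rigor, both repairable along the paper's lines. First, you assert that the limit stabilizing system is $n$-excellent for every $n$ and apply Lemma~\ref{lm-excellent-constant} to it directly; but excellence (indeed even property (P1)) need not survive the limit, since the relevant Jacobian minors can degenerate. The paper avoids this by applying Lemma~\ref{lm-excellent-constant} only at the finite stages, where $S^m$ stabilizes the functions $t^{m-1}_k$ for the previous bounding sequence, and then transferring the identities $t_i(\wtw_{\le k}(x),b_{>k})=t_i(\wtw_{\le k-1}(x_1,\ldots,x_{k-1}),b_k,b_{>k})$ to the limit using uniform convergence of the stabilizers and of $t^m_i$ to $t_i$ on the feasible set; your argument should be phrased this way. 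Second, ``as $x_n$ ranges over $U_n$'' should be ``over the positive-measure set of $x_n$ with $(x_1,\ldots,x_n)\in V_n$,'' followed by an iterative extension of the later coordinates inside $V$ (possible because the limit system is $1/2$-strong); this still yields infinitely many distinct values of $\wtw_n$, hence infinitely many pairwise non-weakly-isomorphic graphons by the last clause of Theorem~\ref{thm-WPforced}.
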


\begin{proof}
The family $\WW$ will be a family of graphons $\WW_P$,
which we introduced in Section~\ref{sec-gensetup} and
we formally define in Section~\ref{sec-family},
for a suitable choice of bounding sequence $P$.
In this proof,
we will only use Theorem~\ref{thm-WPforced}, Lemma~\ref{lm-densityanalytic} and Lemma~\ref{lm-WPvaries},
which can be found in Section~\ref{sec-gensetup}, and the results in Section~\ref{sec-diag}.

Fix an enumeration $G_k$, $k\in\NN$, of all graphs.
We will iteratively apply the results of Section~\ref{sec-diag} to construct bounding sequences $P^m$, $m\in\NN$,
which converge to the sought bounding sequence $P$, and
corresponding finite stabilizing systems $S^m=(U^m_k,V^m_k,I^m_k,J^m_k,d^m_k,b^m_k,w^m_k)_{k\in\NN}$, 
which stabilize the densities of $G_k$.
Initially, we set $P^0$ to be the sequence where each element is $(0,0,1)$ and
$S^0$ to be the $0$-finite stabilizing system with $b^0_k=(1/2,\ldots,1/2)$,
i.e., $U^0_k=[0,1]$, $V^0_k=[0,1]^k$, $I^0_k=J^0_k=\emptyset$, $d^0_k=1$ and $w^0_k(x)=x$ for all $k\in\NN$.

Let $t^m_k$ be the function $t_{P^m,G_k}$ introduced at the end of Section~\ref{sec-gensetup},
let $Q^m_{k,\eps}=\overline{N}_\eps(\wtw_{\le k}(V^m_{k-1} \times U^m_k)) \subseteq [0,1]^{2+3+\ldots+(k+1)}$ for $k\in\NN$, and
let $\delta_m$ be the value of $\delta$ from Lemma~\ref{lm-smallchangestabilize} applied with $S^{m-1}$, $t^{m-1}_k$, and $\eps=1/2^{m+1}$.
The bounding sequence $P^m$ and the stabilizing system $S^m$ will satisfy the following properties for every $m\in\NN$.
\begin{itemize}
\item The bounding sequence $P^m$ is a strengthening of $P^{m-1}$ and contains an infinite number of elements equal to $(0,0,1)$.
\item Each function $t^m_{k}$, $k\in\NN$, is $\delta_m$-close to $t^{m-1}_{k}$.
\item The stabilizing system $S^m$ stabilizes the functions $t^{m-1}_{k}$, $k\in\NN$, and $S^m$ is $m$-finite, $(1/2+1/2^{m+1})$-strong and $m$-excellent.
\item $\|b^m_k-b^{m-1}_{k-1}\|_\infty<1/2^{m+1}$ for every $k\in\NN$.
\item $(J^{m-1}_k)_{k \in \NN}\preceq (J^m_k)_{k\in \NN}$, and
      if $k_0$ is the largest integer such that $J^{m}_k=J^{m-1}_k$ for all $k\in [k_0]$,
      then $I^m_{k}=I^{m-1}_{k}$, $d^m_{k}=d^{m-1}_{k}$, $U^m_{k}=U^{m-1}_{k}$, $V^m_{k}\subseteq V^{m-1}_{k}$, and
      $\|w^m_{k}(x)-w^{m-1}_{k}(x)\|_\infty<1/2^{m}$ for any $x\in V^m_{k-1}\times U^m_{k}$ with $k\in [k_0]$.
\item $Q^m_{m,1/2^m}\subseteq Q^{m-1}_{m-1,1/2^{m-1}}\times [0,1]^{m+1}$.
\item A vector $\vecz{z} \in \AAA$ belongs to $Q^m_{m,1/2^m} \times \prod_{j=m+1}^{\infty} [0,1]^{j+1}$
      if and only if $p_i(z)\in [l_i,u_i]$ for every element $(p_i,l_i,u_i)$ of the sequence $P^m$.
\end{itemize}

Fix an integer $m\in\NN$ and suppose that
we have defined bounding sequences $P^0,\ldots,P^{m-1}$ and stabilizing systems $S^0,\ldots,S^{m-1}$.
By Lemma \ref{lm-smallchangestabilize},
there exists a stabilizing system $\whS$ for $t^{m-1}_{k}$ and the same $b_k$, $k\in\NN$, such that $\|\whw_k(x)-w_k^{m-1}(x)\|_\infty<1/2^{m+1}$ for any $x \in V_{k-1}^{m-1} \times U_k^{m-1}$ with $k \in \NN$;
if $m=1$, we just set $\whS$ to be $S^0$.
We next apply Lemma~\ref{lm-make-excellent} with $c=1/2+1/2^m$, $c'=1/2+1/2^{m+1}$ and $\eps=1/2^{m+1}$
to obtain an $m$-excellent stabilizing system $S^m$ for $t^{m-1}_{k}$, $k\in\NN$,
which is $m$-finite and $(1/2+1/2^{m+1})$-strong.
Also note that $\|b^m_k-b^{m-1}_{k}\|_\infty<1/2^{m+1}$ for every $k\in\NN$, and
$(J^{m-1}_k)_{k \in \NN}\preceq (J^m_k)_{k\in \NN}$.
Furthermore, if $k_0$ is the largest integer such that $J^{m}_k=J^{m-1}_k$ for all $k\in [k_0]$,
then $I^m_{k}=I^{m-1}_{k}$, $d^m_{k}=d^{m-1}_{k}$, $U^m_{k}=U^{m-1}_{k}$, $V^m_{k}\subseteq V^{m-1}_{k}$, and
\begin{align*}
\|w^m_{k}(x)-w^{m-1}_{k}(x)\|_\infty & \le  \|w^m_{k}(x)-\whw_{k}(x)\|_\infty+\|\whw_{k}(x)-w^{m-1}_{k}(x)\|_\infty\\
                                         & <  1/2^{m+1}+1/2^{m+1} = 1/2^m
\end{align*}
for every $x\in V^m_{k}$ and $k\in [k_0]$.
This implies $Q^m_{k_0,1/2^m}\subseteq Q^{m-1}_{k_0,1/2^{m-1}}$.
For $k>k_0$,
we have that $||\wtw^m_k(x)-b^m_k||_{\infty}<1/2^{m+1}$ for all $x\in V^m_{k-1}\times U^m_k$ and
$||b^m_k-b^{m-1}_k||_{\infty}<1/2^{m+1}$.
It follows that $Q^m_{m,1/2^m}\subseteq Q^{m-1}_{m-1,1/2^{m-1}}\times [0,1]^{m+1}$.
This finishes the definition of the stabilizing system $S^m$ and verifies its properties as stated above.

We next define the bounding sequence $P^m$.
Set $K$ to be the maximum integer obtained
when Lemma~\ref{lm-WPvaries} is applied with $P^{m-1}$, $\eps=\delta_{m}$ and each of the graphs $G_1,\ldots,G_m$.
Choose an infinite increasing sequence $k_i$, $i\in\NN$, with $k_1\ge K$, such that
$P^{m-1}_{k_i}=(0,0,1)$ for every $i\in\NN$ and
there exists an infinite number of indices $j\in\NN$ such that $P^{m-1}_j=(0,0,1)$ and $j\not=k_i$ for all $i\in\NN$.
The Stone-Weierstrass Theorem implies that, for every $i\in\NN$,
there exists a polynomial $q_i(z_1,\ldots,z_m)$, $z_j\in [0,1]^{j+1}$, $j\in[m]$, such that
\begin{itemize}
\item $q_i(z_1,\ldots,z_m)\in [0,1]$ for every $(z_1,\ldots,z_m)\in [0,1]^{2+\ldots+(m+1)}$,
\item $q_i(z_1,\ldots,z_m)\in [1-2^{-i},1]$ for every $(z_1,\ldots,z_m)\in Q^m_{m,1/2^{m}}$, and
\item $q_i(z_1,\ldots,z_m)\in [0,2^{-i}]$ for $(z_1,\ldots,z_m)$ at Hausdorff distance at least $2^{-i}$ from $Q^m_{m,1/2^{m}}$.
\end{itemize}

Let $\xi_i\in (0,1]$ be a real such that all coefficients of $\xi_i\cdot q_i$ have absolute value at most $2^{-2^{k_i}}/9$ and
all the partial derivatives of $\xi_i\cdot q_i$ belong to $[-1,+1]$ on $[0,1]^{2+\ldots+(m+1)}$.
The bounding sequence $P^m$ is defined as follows:
$P^m_{k_i}$ is \mbox{$(\xi_i\cdot q_i,\xi_i(1-2^{-i}),\xi_i)$} for $i\in\NN$, and
$P^m_j$ is $P^{m-1}_j$ for the remaining indices $j$, i.e., if $j\not=k_i$ for all $i\in\NN$.
Observe that $(z_1,\ldots,z_m)\in [0,1]^{2+\ldots+(m+1)}$ belongs to $Q^m_{m,1/2^m}$ if and only if
$p_i(z_1,\ldots,z_m)\in [l_i,u_i]$ for every element $(p_i,l_i,u_i)$ of the sequence $P^m$.
Indeed, 
the construction implies that $Q^m_{m,1/2^m}\subseteq Q^{m-1}_{m-1,1/2^{m-1}}\times [0,1]^{m+1}$;
in particular, any point in $Q^m_{m,1/2^m}$ satisfies all constraints implied by $P^{m-1}$.
Moreover, the functions $t^m_{k}$ and $t^{m-1}_{k}$ for $k\le m$ are $\delta_{m}$-close.

We have now defined the bounding sequence $P^m$ and the stabilizing system $S^m$ for every $m\in\NN$.
Since each element of the bounding sequences $P^m$ changes at most once during the iterative procedure described above,
we can define a bounding sequence $P$ as
$$P_i=\lim_{m\to\infty}P^m_{i}\,\mbox{.}$$
The bounding sequence $P$ determines the family $\WW_P$ of graphons;
the family $\WW$ from the statement of the theorem will be constructed as a subset of $\WW_P$.

Recall that $(J^{m-1}_k)_{k \in \NN}\preceq (J^m_k)_{k\in \NN}$ for every $m\in\NN$. This implies that for every $k\in\NN$,
there exists $m_0$ such that $J^m_k$ is the same for all $m\ge m_0$, and
consequently $I^m_k$, $d^m_k$ and $U^m_k$ are the same and $V^{m+1}_k\subseteq V^m_k$ for all $m\ge m_0$.
Hence, we can define
\[J_k=\lim_{m\to\infty} J^m_{k}\mbox{, }\;d_k=\lim_{m\to\infty} d^m_{k}\mbox{ and } U_{k}=\lim_{m\to\infty} U^m_{k}\,\mbox{.}\]
Likewise, we can define
\[V_k=\bigcap_{m\in\NN\setminus [k]} V^m_{k}\mbox{ and }V=\bigcap_{k\in\NN}\left(V_k\times [0,1]^{\NN\setminus [k]}\right).\]
Since each $V^m_{k}$ satisfies for every $(x_1,\ldots,x_{k-1})\in V^m_{k-1}$ that
the measure of $x_k\in U^m_k$ such that $(x_1,\ldots,x_k)\in V^m_k$ is at least $\left(1/2+1/2^{m+1}\right)|U^m_k|$,
it follows that $V_k$ satisfies for every $(x_1,\ldots,x_{k-1})\in V_{k-1}$ that
the measure of $x_k\in U_k$ such that $(x_1,\ldots,x_k)\in V_k$ is at least $|U_k|/2$.

Since the sequence $(b^m_k)_{m\in\NN}$ is Cauchy for every $k\in\NN$,
we can also define $b_k$ to be the limit of the sequence $(b^m_k)_{m\in\NN}$.
Finally, the sequence of functions $(w^m_{k})_{m\in\NN}$ is uniformly convergent on $V_{k-1} \times U_k$, and
we set $w_k$ to be the limit.
In this way, we have defined a stabilizing system $S=(U_k,V_k,I_k,J_k,d_k,b_k,w_k)_{k\in\NN}$.
Observe that $S$ is $1/2$-strong.
Also observe that
\begin{equation*}
w_k(x,b_{k,d_k})_j  =  \lim_{m\to\infty} w_k(x,b^m_{k,d_k})_j 
                    =  \lim_{m\to\infty} w^m_k(x,b^m_{k,d_k})_j 
                    =  \lim_{m\to\infty} b^m_{k,j} = b_{k,j}\mbox{.}
\end{equation*}
for every $x \in V_{k-1}$ and $j \in J_k$,
where for the second equality we used the fact that $w_k^m$ converges uniformly to $w_k$.
It follows that $\wtw_k(x,b_{k,d_k})=b_k$ for every $x\in V_{k-1}$.
Finally,
observe that $z\in [0,1]^\NN$ satisfies $p_i(z)\in [l_i,u_i]$ for every element $(p_i,l_i,u_i)$ of the sequence $P$
if and only if there exist $x\in V$ such that $\vec z=(\wtw_1(x_1),\wtw_2(x_1,x_2),\ldots)$.

Let $t_k$ be the function $t_{P,G_k}$ and
observe that the sequence $(t^m_k)_{m\in\NN}$ uniformly converges to $t_k$
on the set containing all $z\in [0,1]^\NN$ such that $p(z)\in [l,u]$ for every element $(p,l,u)$ of the sequence $P$.
Consequently, we obtain that
\begin{equation*}
t_{i}(\wtw_{\le k}(x),b_{>k})  =  \lim_{m\to\infty} t_{i}(\wtw^m_{\le k}(x),b_{>k}) 
                               =  \lim_{m\to\infty} t^m_{i}(\wtw^m_{\le k}(x),b_{>k})
\end{equation*}
for every $i\in [k]$, $x\in V_k$ and $k\in\NN$, where we used that $t_i^m$ converges uniformly to $t_i$.
Using Lemma \ref{lm-excellent-constant}, this implies that
\begin{align*}
t_{i}(\wtw_{\le k}(x),b_{>k}) =&  \lim_{m\to\infty} t^m_{i}(\wtw^m_{\le k}(x),b_{>k}) =  \lim_{m\to\infty} t^m_{i}(\wtw^m_{\le k-1}(x_1,\ldots,x_{k-1}),b_k,b_{>k}) \\=&  t_i(\wtw_{\le k-1}(x_1,\ldots,x_{k-1}),b_k,b_{>k})
\end{align*}
for every $i\in [k]$, $x\in V_k$ and $k\in\NN$.
It follows that
\[t_{i}(\wtw_1(x_1),\wtw_2(x_1,x_2),\ldots)=t_i(\wtw_{\le i-1}(x_1,\ldots,x_{i-1}),b_{\ge i})\]
for every $x\in V$.
Thus, if two elements $x$ and $x'$ from $V$ agree on the first $i-1$ coordinates,
then $t_{i}(\wtw_1(x_1),\wtw_2(x_1,x_2),\ldots)$ and
$t_{i}(\wtw_1(x'_1),\wtw_2(x'_1,x'_2),\ldots)$ are the same,
i.e.,
\[\tau(G_i,W_P(\wtw_1(x_1),\wtw_2(x_1,x_2),\ldots))=\tau(G_i,W_P(\wtw_1(x'_1),\wtw_2(x'_1,x'_2),\ldots)).\]
Consequently, if $x$ and $x'$ from $V$ agree on the first $r-1$ coordinates,
then the densities of all graphs $G_1,\ldots,G_r$
in the graphons $W_P(\wtw_1(x_1),\wtw_2(x_1,x_2),\ldots)$ and $W_P(\wtw_1(x'_1),\wtw_2(x'_1,x'_2),\ldots)$
are the same.

We are now ready to define the family $\WW$ of graphons with the properties from the statement of the theorem.
Theorem~\ref{thm-WPforced} implies that there exists graphs $H_1,\ldots,H_\ell$ and
a polynomial $q$ in $\ell$ variables such that a graphon $W$ is weakly isomorphic to a graphon contained in $\WW_P$
if and only if $q(d(H_1,W),\ldots,d(H_{\ell},W))=0$.
Set $d_i=d(H_i,W_P(b_1,b_2,\ldots))$ for $i\in [\ell]$ and
define the family $\WW$ as follows
\[\WW=\{W\mbox{ such that }d(H_i,W)=d_i\mbox{ for all }i\in[\ell]\}.\]
Observe that $\WW\subseteq\WW_P$.

Suppose that a graphon $W\in\WW$ is finitely forcible.
We can assume that there exist $r\in\NN$ and $d'_1,\ldots,d'_r$ such that
$W$ is the unique graphon in $\WW$ with the density of $G_i$ equal to $d'_i$ for $i\in [r]$, and
the graphs $G_1,\ldots,G_r$ include all graphs $H_1,\ldots,H_{\ell}$.
Since $W$ is weakly isomorphic to a graphon in $\WW_P$,
there exists $x\in V$ such that $W$ and $W_P(\wtw_1(x_1),\wtw_2(x_1,x_2),\ldots)$ are weakly isomorphic.
Since the measure of $y_r$ such that $(x_1,\ldots,x_{r-1},y_r)\in V_r$ is at least $|U_r|/2>0$,
there exists $x'_r\in V_r$ such that $x'_r\not=x_r$ and $(x_1,\ldots,x_{r-1},x'_r)\in V_r$.
We set $x'_i=x_i$ for $i\in [r-1]$ and
iteratively find $x'_i$ for $i>r$ as follows:
if $x'_1,\ldots,x'_{i-1}$ has been constructed for some $i>r$,
then we choose $x'_i$ to be an arbitrary element of $U_i$ such that
$(x'_1,\ldots,x'_{i-1},x'_i)\in V_i$;
note that such $x'_i$ exists since the measure of suitable choices of $x'_i$ is at least $|U_i|/2>0$.
Let $W'$ be the graphon $W_P(\wtw_1(x'_1),\wtw_2(x'_1,x'_2),\ldots)$.
Since $x'\in V$, the graphon $W'$ belongs to the family $\WW_P$.
In addition, the density of each of the graphs $G_1,\ldots,G_r$ in $W$ and in $W'$ is the same.
On the other hand, $W'$ is not weakly isomorphic to $W$ by Theorem~\ref{thm-WPforced}
because $\wtw_r(x_1,\ldots,x_r)\not=\wtw_r(x'_1,\ldots,x'_r)$.
We conclude that the graphon $W$ is not finitely forcible.
\end{proof}

\section{Finitely forcible graphon families}
\label{sec-family}

In this section, we define the family of graphons $W_P(z)$, and prove Theorem \ref{thm-WPforced}.
The general structure of graphons $W_P(z)$ is visualized in Figure~\ref{fig-main}.
The graphon $W_P(z)$ is a partitioned graphon with 14 parts,
denoted $A$, $B$, $C$, $D_A$, $D_B$, \dots, $D_G$, $E$, $F$, $Q$ and $R$.
The size of each part besides $Q$ is $1/25$, and the size of $Q$ is $12/25$.
The degree of each part is given in Table~\ref{tab-parts} (we do not compute the exact value of the degree of the part~$Q$).
Each part $A$, $B$, $C$, $D_A$, $D_B$, \dots, $D_G$, $E$, $F$, $Q$ and $R$
is a half-open subinterval of $[0,1)$.

\begin{figure} 
\begin{center}
\epsfxsize=0.95\textwidth \epsfbox{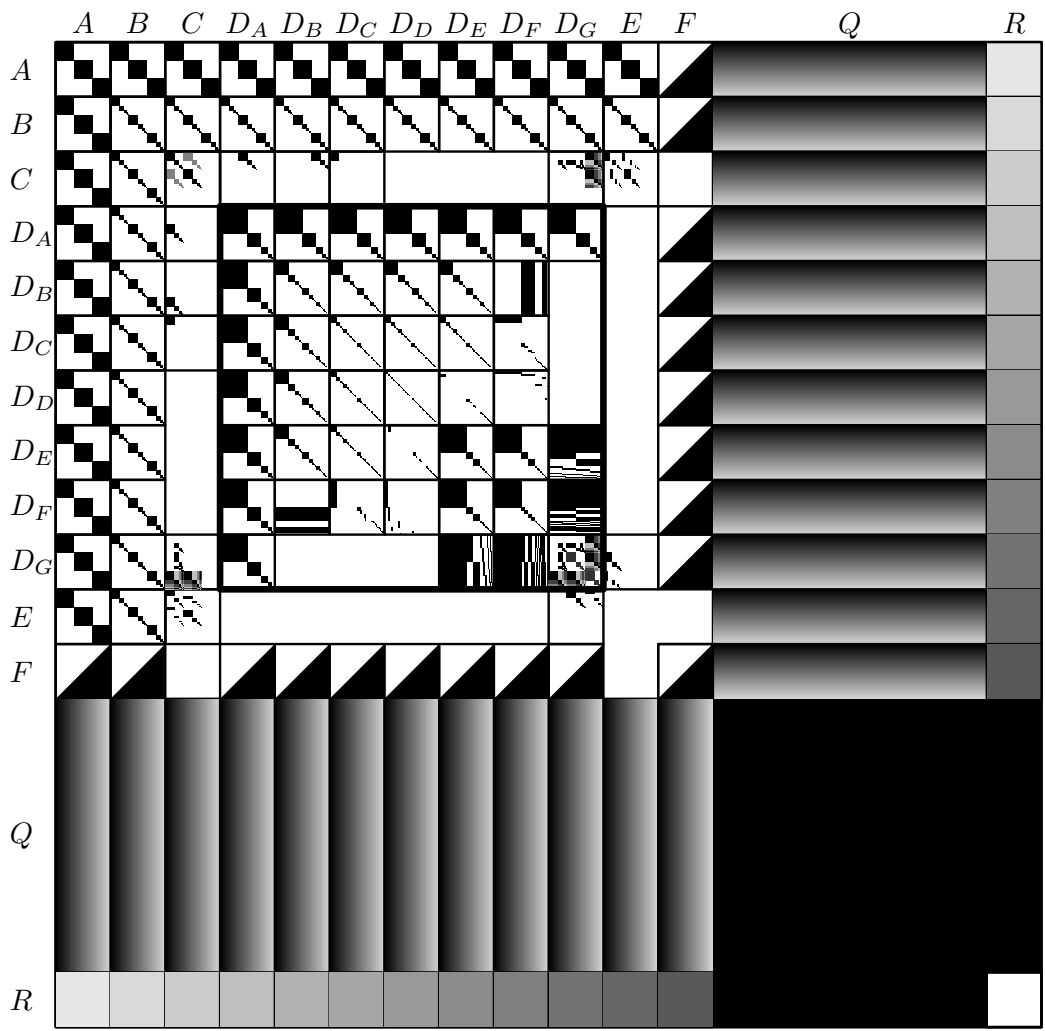}
\end{center}
\caption{The structure of the graphon $W_P(z)$.
         The parts of the graphon corresponding to the graphon $W_0$ from Theorem~\ref{thm-universal} are $D_A, \ldots, D_G$ and
	 are framed by a thicker line. Note that the size of the part $Q$ is not drawn to scale.}
\label{fig-main}
\end{figure}

\begin{table}
\begin{center}
\renewcommand*{\arraystretch}{1.2}
\begin{tabular}{|l|cccccccccc|}
\hline
Part & $A$ & $B$ & $C$ & $D_A$ & $\cdots$ & $D_G$ & $E$ & $F$ & $Q$ & $R$ \\
\hline
Degree & 
$\frac{1201}{2500}$ &  
$\frac{1202}{2500}$ &  
$\frac{1203}{2500}$ &  
$\frac{1204}{2500}$ &  
$\cdots$ &
$\frac{1210}{2500}$ &  
$\frac{1211}{2500}$ &  
$\frac{1212}{2500}$ &  
$>\frac{1300}{2500}$ & 
$\frac{1278}{2500}$\\ 
\hline
\end{tabular}
\end{center}
\caption{The degrees of the parts of the graphon $W_P(z)$.}
\label{tab-parts}
\end{table}

Let $J_k$, $k\in\NN$, be the interval $[1-2^{-k+1},1-2^{-k})$,
so $J_1=[0,1/2)$, $J_2=[1/2,3/4)$, $J_3=[3/4,7/8)$, etc.
Note that the intervals $J_k$, $k\in\NN$, give a partition of $[0,1)$.
If $x\in\RR$, define $\coord{x}$ as the unique integer $k$ such that $x-\lfloor x\rfloor\in J_k$.

\begin{figure} 
\begin{center}
\epsfbox{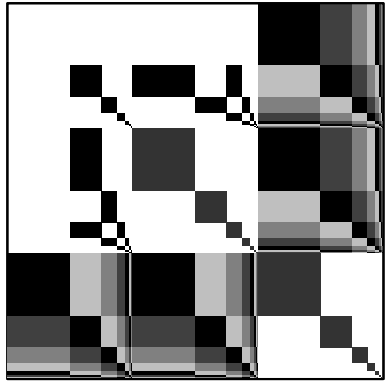}
\end{center}
\caption{Visualization of the graphon $W_F$, which is in the tile $D_G\times D_G$ of the graphon $W_P(z)$.}
\label{fig-W_F}
\end{figure}

We start by defining an auxiliary graphon $W_F$, which is visualized in Figure~\ref{fig-W_F}.
Fix a bounding sequence $P=(p_i,l_i,u_i)_{i \in \NN}$, and
recall that $\pi_{i,j}$ is the coefficient of the monomial $z^{M_j}$ in the polynomial $p_i$.
Let $\pi_{i,j}^+$ be equal to $\pi_{i,j}$ if $\pi_{i,j}\ge 0$, and $0$ otherwise;
similarly, let $\pi_{i,j}^-$ be equal to $-\pi_{i,j}$ if $\pi_{i,j}<0$, and $0$ otherwise.
Note that the definition of a bounding sequence
implies that $|\pi_{i,j}|\le 2^{-2^j}/9\le 2^{-2j}/9$ for all $i,j\in\NN$.

The value of $W_F(x,y)$ is equal to $1$ for $(x,y)\in [0,2/3)^2\setminus [1/3,2/3)^2$ if
\begin{itemize}
\item $x,y\in [1/6,1/3)$ and $\coord{3x}=\coord{3y}$,
\item $x\in [1/3,2/3)$, $y\in [1/6,1/3)$, and either $M_{\coord{3x}}=M_{\coord{3y}}\setminus\{\min M_{\coord{3y}}\}$ or $M_{\coord{3x}}=\{\min M_{\coord{3y}}\}$, or
\item $y\in [1/3,2/3)$, $x\in [1/6,1/3)$, and either $M_{\coord{3y}}=M_{\coord{3x}}\setminus\{\min M_{\coord{3x}}\}$ or $M_{\coord{3y}}=\{\min M_{\coord{3x}}\}$.
\end{itemize}
Recall that both $M_{\coord{3x}}$ and $M_{\coord{3y}}$ are multisets and
$M_{\coord{3y}}\setminus\{\min M_{\coord{3y}}\}$
is the multiset obtained from $M_{\coord{3y}}$ by removing a single instance of $\min M_{\coord{3y}}$.
The value of $W_F(x,y)$ is equal to $0$ everywhere else on $(x,y)\in [0,2/3)^2\setminus [1/3,2/3)^2$.
The value of $W_F(x,y)$ for the remaining $(x,y)\in [0,1)^2$ is defined as follows:
$$W_F(x,y)=\left\{\begin{array}{cl}
           9\cdot 2^{2\coord{3y}}\cdot\pi^+_{\coord{3x},\coord{3y}} & \mbox{if $x\in [2/3,1)$ and $y\in [0,1/3)$,} \\
           9\cdot 2^{2\coord{3y}}\cdot\pi^-_{\coord{3x},\coord{3y}} & \mbox{if $x\in [2/3,1)$ and $y\in [1/3,2/3)$,} \\
           9\cdot 2^{2\coord{3x}}\cdot\pi^+_{\coord{3y},\coord{3x}} & \mbox{if $x\in [0,1/3)$ and $y\in [2/3,1)$,} \\
           9\cdot 2^{2\coord{3x}}\cdot\pi^-_{\coord{3y},\coord{3x}} & \mbox{if $x\in [1/3,2/3)$ and $y\in [2/3,1)$,} \\
           1-u_{\coord{3x}} & \mbox{if $x,y\in [1/3,2/3)$ and $\coord{3x}=\coord{3y}$,}\\
           0 & \mbox{if $x,y\in [1/3,2/3)$ and $\coord{3x}\not=\coord{3y}$,}\\
           l_{\coord{3x}} & \mbox{if $x,y\in [2/3,1)$ and $\coord{3x}=\coord{3y}$, and}\\
           0 & \mbox{if $x,y\in [2/3,1)$ and $\coord{3x}\not=\coord{3y}$.}\\
           \end{array}\right.$$
By Theorem~\ref{thm-universal},
there exists a finitely forcible graphon $W_0$ such that $W_{F}$ is a subgraphon of $W_0$.
The graphon $W_0$, which is depicted in Figure~\ref{fig-universal}, is a partitioned graphon with 10 parts $A,\ldots,G,P,Q,R$.
Let $\iota_X^0$ be the linear map from $[0,1)$ to the part $X$ of the graphon $W_0$.
Since the graphon $W_F$ is embedded in the tile $G\times G$ of $W_0$,
it holds that $W_0(\iota^0_G(x),\iota_G^0(y))=W_F(x,y)$ for all $(x,y)\in [0,1)^2$.
Also note that the definition of the graphon $W_0$ depends on $P$ only, i.e., it does not depend on $z\in [0,1]^\NN$.

We are now ready to define the graphon $W_P(z)$.
Fix $z\in [0,1]^\NN$ (in addition to the sequence $P$, which we have already fixed).
For the rest of this section, we will just write $W_P$ for $W_P(z)$, which allows us to write $W_P(x,y)$ for the value of the graphon $W_P(z)$ for $(x,y)\in [0,1)^2$.
For each part $X$ of $W_P$, let $\iota_X$ be the linear map from $[0,1)$ to $X$.
For $Z,Z'\in\{A,\ldots,G\}$, set $W_P(\iota_{D_Z}(x),\iota_{D_{Z'}}(y))=W_0(\iota^0_Z(x),\iota^0_{Z'}(y))$ for all $(x,y)\in [0,1)^2$.
The values of $W_P$ on the remaining pairs of the parts $A$, $B$, $C$, $D_A$, $D_B$, \dots, $D_G$, $E$, and $F$
are given in Table~\ref{tab-def-WP}.

The description of the tiles $C\times E$ and $D_G\times E$
may seem harder to immediately grasp.
Let us examine e.g. the condition $0 \le 2^{\coord{3x}}(3y-1)+2\le p_{\coord{3x}}(z)$ in the first case
in the description of the tile $C\times E$.
Observe that the condition is equivalent to
\[y\in\left[\frac{1-2^{-(\coord{3x}-1)}}{3},\frac{1-2^{-(\coord{3x}-1)}+2^{-\coord{3x}}\cdot p_{\coord{3x}}(z)}{3}\right].\]
In particular, the condition can only be satisfied if $\coord{3x}=\coord{3y}$.
The other conditions in the description of the tile $C\times E$ and
two of the conditions in the description of the tile $D_G\times E$
can be expressed analogously.

\begin{table}
\begin{center}
\scalebox{0.80}{
\renewcommand*{\arraystretch}{1.2}
\begin{tabular}{|c|c|c|l|}
\hline
Tile $X\times Y$ & $W_P(\iota_X(x),\iota_Y(y))$ & Domain of $(x,y)$ & Condition \\
\hline
$A\times A$ & & & \\
$\vdots$ & $1$ & $[0,1)^2$ & $\lfloor 3x\rfloor=\lfloor 3y\rfloor$\\
$A\times E$ & & & \\
\hline
$B\times B$ & & $[0,1/3)^2\cup$ & \\
$\vdots$ & $1$ & $[1/3,2/3)^2\cup$ & $\coord{3x}=\coord{3y}$ \\
$B\times E$ & & $[2/3,1)^2$ & \\
\hline
\multirow{4}{*}{$C\times C$} & $z^{M_{\coord{3x}}}$ & $[0,1/3)^2$ & $\coord{3x}=\coord{3y}$\\
            & $z^{M_{\coord{3x}}}$ & $[1/3,2/3)^2$ & $\coord{3x}=\coord{3y}$\\
            & $1-z^{M_{\coord{3x}}}$ & $[0,1/3)\times [1/3,2/3)$ & $\coord{3x}=\coord{3y}$\\
            & $1-z^{M_{\coord{3x}}}$ & $[1/3,2/3)\times [0,1/3)$ & $\coord{3x}=\coord{3y}$\\
\hline
$C\times D_A$ & $1$ & $[0,1/3)\times [1/3,2/3)$ & $\coord{3x}=\coord{3y}$ \\
\hline
$C\times D_B$ & $1$ & $[0,1/3)\times [2/3,1)$ & $\coord{3x}=\coord{3y}$ \\
\hline
$C\times D_C$ & $1$ & $[0,1/6)^2$ & \\
\hline
\multirow{2}{*}{$C\times D_G$} & \multirow{2}{*}{$W_F(x,y)$} & $[0,1/3)\times [0,2/3)\cup$ & \\
              & & $[0,2/3)\times [2/3,1)$ & \\
\hline
\multirow{4}{*}{$C\times E$}
            & $1$ & $[0,1/3)\times [0,1/3)$ & $0 \le 2^{\coord{3x}}(3y-1)+2\le p_{\coord{3x}}(z)$ \\
            & $1$ & $[0,1/3)\times [1/3,2/3)$ & $0 \le
            2^{\coord{3x}}(3y-2)+2\le 1-p_{\coord{3x}}(z)$ \\
            & $1$ & $[1/3,2/3)\times [0,1/3)$ & $1 \ge 2^{\coord{3x}}(3y-1)+2\ge p_{\coord{3x}}(z)$ \\
            & $1$ & $[1/3,2/3)\times [1/3,2/3)$ & $1 \ge 2^{\coord{3x}}(3y-2)+2\ge 1-p_{\coord{3x}}(z)$ \\
\hline
\multirow{4}{*}{$D_G\times E$} & \multirow{2}{*}{$1$} & \multirow{2}{*}{$[1/3,2/3)\times [0,1/3)$} & $\coord{3x}=\coord{3y}$ and \\
                               & & & $2^{\coord{3y}}(3x-1)+2\le u_{\coord{3y}}$ \\
              & \multirow{2}{*}{$1$} & \multirow{2}{*}{$[2/3,1)\times [0,1/3)$} & $\coord{3x}=\coord{3y}$ and \\
	                       & & & $2^{\coord{3y}}(3x-1)+2\le l_{\coord{3y}}$ \\
\hline
$A\times F$ & \multirow{6}{*}{$1$} & \multirow{6}{*}{$[0,1)^2$} & \multirow{6}{*}{$x+y\ge 1$} \\
$B\times F$ & & & \\
$D_A\times F$ & & & \\
$\vdots$ & & & \\
$D_G\times F$ & & & \\
$F\times F$ & & & \\
\hline
\end{tabular}
}
\end{center}
\caption{The definition of the values $W_P(x,y)$ for $x,y$ from the parts $A$, $B$, $C$, $D_A$, $D_B$, \dots, $D_G$, $E$, and $F$
         except for the tiles $D_Z\times D_{Z'}$, $Z,Z'\in\{A,\ldots,G\}$.
	 The definition for symmetric pairs of tiles is omitted, e.g., the values on the tile $A\times B$ define the values on the tile $B\times A$. In each case, the tile is set to $0$ if none of the conditions are satisfied.}\label{tab-def-WP}
\end{table}

We now set $W_P(\iota_X(x),\iota_Q(y))$ to be equal to the following integral
$$W_P(\iota_X(x),\iota_Q(y))=\frac{1}{12}\left(12-\sum_{Z\in\{A,B,C,D_A,\ldots,D_G,E,F\}}\;\int\limits_{[0,1)}W_P(\iota_X(x),\iota_Z(z))\dd z\right)$$
for all $X\in\{A,B,C,D_A,\ldots,D_G,E,F\}$ and all $x,y\in [0,1)$.
Note that the sum above has $12$ terms, so its value is always between $0$ and $1$ (inclusively).
Finally, we set $W_P$ to be equal to $1$ on the tiles $Q\times Q$ and $Q \times R$,
to $1/25,\ldots,12/25$ on the tiles $A\times R$, $B\times R$, $C\times R$, $D_A\times R$, $D_B\times R$, \dots, $D_G\times R$, $E\times R$, and $F\times R$, respectively, and
to zero on the tile $R\times R$.
This finishes the definition of the graphon $W_P$.

We next prove the following lemma on the structure of the just defined graphons $W_P(z)$.

\begin{lemma}
\label{lm-dep-z}
Let $P=(p_i,l_i,u_i)_{i\in\NN}$ be a bounding sequence.
For any $z,z'\in [0,1]^\NN$,
the graphons $W_P(z)$ and $W_P(z')$
are the same everywhere except for the tiles $C\times C$ and $C\times E$.
\end{lemma}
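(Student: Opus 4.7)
The plan is to verify the lemma by a systematic case analysis over the tiles of the partitioned graphon $W_P(z)$. The direct dependence on $z$ appears only in Table~\ref{tab-def-WP}, so I first would scan that table entry by entry: every value is either a constant depending on $P$ alone (through $l_i$, $u_i$, or the coefficients $\pi_{i,j}$), or a function of $\coord{3x}$ and $\coord{3y}$, or the function $W_F$ (itself built only from $P$), or the graphon $W_0$ on the tiles $D_Z\times D_{Z'}$ (which the construction explicitly notes depends on $P$ only). The only rows that involve $z$ are $C\times C$ (via $z^{M_{\coord{3x}}}$ and $1-z^{M_{\coord{3x}}}$) and $C\times E$ (via the thresholds $p_{\coord{3x}}(z)$ and $1-p_{\coord{3x}}(z)$). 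The remaining tiles ($R$-tiles and $Q\times Q$, $Q\times R$) are defined by explicit numerical constants.

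The only nontrivial point is the part $Q$, whose values are integrals over the other tiles. Since
\[W_P(\iota_X(x),\iota_Q(y))=\frac{1}{12}\left(12-\sum_{Z} \int_{[0,1)} W_P(\iota_X(x),\iota_Z(w))\,\dd w\right),\]
the integrand can depend on $z$ only when $(X,Z)\in\{(C,C),(C,E),(E,C)\}$. Hence it suffices to show that each of these three integrals is independent of $z$ for every $x\in[0,1)$.

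For the integral over $C\times C$: fixing $x$ with $\coord{3x}=k$ and $x\in [0,1/3)$, the set $\{w:\coord{3w}=k\}$ has measure $2^{-k}/3$ inside each of $[0,1/3)$ and $[1/3,2/3)$; on the first piece $W_P$ equals $z^{M_k}$ and on the second it equals $1-z^{M_k}$, so the two contributions add to $2^{-k}/3$, independently of $z$. The same cancellation occurs for $x\in[1/3,2/3)$ with the roles swapped, and for $x\in[2/3,1)$ the integrand is identically zero. For the integral over $C\times E$ the mechanism is analogous: for $x$ with $\coord{3x}=k$, the relevant slab on the $E$-side is split by the thresholds into two subintervals of lengths $2^{-k}p_k(z)/3$ and $2^{-k}(1-p_k(z))/3$ on which $W_P$ equals $1$, so their total length is again the $z$-independent value $2^{-k}/3$; the same argument handles the symmetric integral over $E\times C$. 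Thus every $Q$-valued entry is $z$-independent, and combined with the first paragraph this proves the lemma.

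The main obstacle, such as it is, is bookkeeping: one must correctly read off the subregions specified by the $\coord{\cdot}$ conditions and the threshold inequalities in the $C\times E$ tile, and notice that in both tiles the $z$-dependence enters as a pair of complementary weights ($f$ and $1-f$) on sets of equal measure, so the integrated contribution to any cross-section is always $2^{-k}/3$. Once this pairing is identified the cancellation is immediate; no estimate beyond length computations is required.
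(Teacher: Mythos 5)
Your proof is correct and follows essentially the same route as the paper: note that among the explicitly defined tiles only $C\times C$ and $C\times E$ involve $z$, and then check that the integrals defining the $Q$-tiles (equivalently, the relative degrees $\deg^C$ and $\deg^E$ of points of $C$ and $\deg^C$ of points of $E$) are independent of $z$ because the complementary weights $z^{M_k}$, $1-z^{M_k}$ and $p_k(z)$, $1-p_k(z)$ always integrate to $2^{-k}/3$. This is exactly the cancellation the paper uses, so nothing further is needed.
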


\begin{proof}
Inspecting the definition of the graphon $W_P(z)$,
it is easy to notice that the tiles $C\times C$ and $C\times E$ are indeed
the only tiles among the tiles $X\times Y$, $X,Y\in\{A,B,C,D_A,\ldots,D_G,E,F,R\}$
with the structure depending on the choice of $z$.
However, the changes inside the tiles $C\times C$ and $C\times E$
could possibly result in a change of the tile $C\times Q$ or $E\times Q$.
This can happen only if there exists $x\in C$ such that
the values of $\deg_{W_P(z)}^C(x)$ and $\deg_{W_P(z)}^E(x)$ are not constant as $z$ varies, or
there exists $x\in E$ such that the value of $\deg_{W_P(z)}^C(x)$ varies.
Observe, however, that the following holds:
$$
\begin{array}{ccll}
\deg_{W_P(z)}^C(\iota_C(x)) & = & \left\{\begin{array}{c} 2^{-\coord{3x}}/3 \\ 0 \end{array}\right. & 
                                                \begin{array}{l} \mbox{for $x\in [0,2/3)$,}\\ \mbox{for $x\in [2/3,1)$,}\\ \end{array} \\                              
\deg_{W_P(z)}^E(\iota_C(x)) & = & \left\{\begin{array}{c} 2^{-\coord{3x}}/3 \\ 0 \end{array}\right. & 
                                                \begin{array}{l} \mbox{for $x\in [0,2/3)$,}\\ \mbox{for $x\in [2/3,1)$,}\\ \end{array} \\                              
\deg_{W_P(z)}^C(\iota_E(x)) & = & \left\{\begin{array}{c} 2^{-\coord{3x}}/3 \\ 0 \end{array}\right. & 
                                                \begin{array}{l} \mbox{for $x\in [0,2/3)$,}\\ \mbox{for $x\in [2/3,1)$.}\\ \end{array} \\
\end{array}
$$
Hence, none of the relative degrees depend on $z$,
which implies that the tiles $C\times Q$ and $E\times Q$ also do not depend on $z$.
\end{proof}

Lemma~\ref{lm-dep-z} implies that for each $X\in\{A,B,C,D_A,\ldots,D_G,E,F,Q,R\}$,
the tile $X\times Q$ is the same in all graphons $W_P(z)$, $z\in [0,1]^\NN$,
i.e., the tile is independent of the choice of $z$.
In particular, the degree of the part $Q$ does not depend on $z\in [0,1]^\NN$.

We next prove the following claim about the graphons $W_P(z)$.

\begin{lemma}
\label{lm-ff}
Let $P$ be a bounding sequence.
The graphons $W_P(z)$ and $W_P(z')$, $z,z'\in [0,1]^\NN$,
are weakly isomorphic if and only if $z=z'$.
\end{lemma}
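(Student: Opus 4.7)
The direction $z = z' \Rightarrow W_P(z)$ and $W_P(z')$ weakly isomorphic is immediate since $W_P(z) = W_P(z')$ pointwise. For the converse, suppose $W_P(z)$ and $W_P(z')$ are weakly isomorphic. By the Borgs--Chayes--Lov\'asz characterization recalled in Section~\ref{sec-prelim}, I would fix a common graphon $W$ and measure-preserving maps $\varphi,\varphi':[0,1)\to[0,1)$ with $W_P(z)\circ(\varphi,\varphi)=W=W_P(z')\circ(\varphi',\varphi')$ almost everywhere. The degrees of the parts listed in Table~\ref{tab-parts} are pairwise distinct, and by Lemma~\ref{lm-dep-z} the degree function on the part $Q$ coincides in $W_P(z)$ and $W_P(z')$ and takes values strictly above every other part's degree. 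Consequently the common preimages of each part under $\varphi$ and $\varphi'$ agree (modulo null sets), so the weak isomorphism respects the partition of $[0,1)$ into the fourteen parts $A,B,C,D_A,\ldots,D_G,E,F,Q,R$.

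The heart of the argument is to show that the finer sub-interval structure of $C$, namely the partition $\{x\in C:\coord{3x}=j\}_{j\in\NN}$, is also preserved up to measure zero. For this I would invoke the finitely forcible ``skeleton'' built out of $D_A,\ldots,D_G$: by construction, the restriction of $W_P(z)$ to $D_A\cup\cdots\cup D_G$ is a rescaled copy of the graphon $W_0$ from Theorem~\ref{thm-universal}, and this piece of the structure is independent of $z$. Any weak isomorphism between $W_P(z)$ and $W_P(z')$ must therefore restrict to a weak self-isomorphism of $W_0$, and hence preserve every essentially measurable feature of $W_0$'s internal structure -- in particular the subdivision of $D_G$ induced by the embedded copy of $W_F$. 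Propagating this rigidity through the tile $C\times D_G$ (which equals $W_F$ and whose row $W_F(x,\cdot)$ discriminates different values of $\coord{3x}$), together with the simpler discrimination provided by the tiles $C\times D_A$, $C\times D_B$, $C\times D_C$, yields the desired preservation of the sub-interval partition of $C$. An analogous argument, using the tile $C\times E$ and the constraint structure on $E$, handles $E$.

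Once the sub-interval structure is matched in the two graphons, the constant value of the $C\times C$ tile on the sub-square $\{(x,y)\in C^2: x,y\in[0,1/3),\ \coord{3x}=\coord{3y}=j\}$ is $z^{M_j}$ on one side and must equal $z'^{M_j}$ on the other, so $z^{M_j}=z'^{M_j}$ for every $j\in\NN$. Because the enumeration $(M_j)_{j\in\NN}$ lists every finite multiset of positive integers, in particular every singleton $\{i\}$, this yields $z_i=z'_i$ for all $i\in\NN$ and therefore $z=z'$. The main obstacle is the middle step: a priori a measure-preserving rearrangement of $C$ is free to shuffle the $\coord{}$-sub-intervals (which have distinct lengths $2^{-j}/3$, so global measure preservation alone does not pin them down), and the rigidity precluding this must be bootstrapped from the finitely forcible backbone $D_A\cup\cdots\cup D_G$ through the cross-tiles $C\times D_Z$. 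Making this bootstrapping rigorous -- producing an essentially unique correspondence between the $\coord{}$-sub-intervals of $C$ in the two graphons -- is the delicate portion of the argument.
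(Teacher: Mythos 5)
Your first and last steps match the paper: the fourteen parts are recovered from their pairwise distinct degrees, and once the block structure of the tile $C\times C$ is matched, reading off $z^{M_j}$ for every $j$ (in particular for every singleton $M_j=\{i\}$) gives $z=z'$. The gap is exactly in what you call the heart of the argument. The inference ``any weak isomorphism restricts to a weak self-isomorphism of $W_0$ and hence preserves every essentially measurable feature of $W_0$'s internal structure'' is not valid as stated: weak isomorphism preserves only density invariants (structure up to measure-preserving re-parametrization), so to pin down the $\coord{\cdot}$-classes of $C$ you must in any case exhibit an invariant separating them, and the proposed propagation through $C\times D_G$, $C\times D_A$, etc.\ is precisely the part you leave unproven. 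Moreover, the detour through Theorem~\ref{thm-universal} is unnecessary, because the needed invariant is intrinsic to $C$: by Lemma~\ref{lm-dep-z}, a vertex $x\in C$ with $\eta_C(x)\in[0,2/3)$ and $\coord{3\eta_C(x)}=j$ has relative degree $\deg_W^C(x)=2^{-j}/3$ inside $C$, while vertices with $\eta_C(x)\in[2/3,1)$ have relative $C$-degree $0$. These values are pairwise distinct, so the partition of $C$ into the $\coord{\cdot}$-classes is determined by degrees and is automatically respected by any weak isomorphism; this one observation replaces your entire ``bootstrapping'' step and is exactly how the paper argues (the part $C$ itself being identified by its degree $1203/2500$).

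One further point your final step should address: matching the class $C_j$ in the two graphons only tells you that the restrictions of $W_P(z)$ and $W_P(z')$ to $C_j\times C_j$ are weakly isomorphic, and each restriction is a two-by-two step function with diagonal value $a=z^{M_j}$ and off-diagonal value $1-a$; a priori one must exclude $z^{M_j}=1-z'^{M_j}$. This is easy to do, but it should be said: either note that the two halves of $C_j$ are distinguished by their relative degree to $D_A$ (the tile $C\times D_A$ meets only the $[0,1/3)$-half of $C_j$), or observe that the triangle density of this step function equals $\bigl((2a-1)^3+1\bigr)/8$, which is strictly increasing in $a$, so the restricted graphon determines $z^{M_j}$ uniquely. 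With the relative-degree argument above and this remark, the proof closes along the lines of your last paragraph.
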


\begin{proof}
Fix $z\in [0,1]^\NN$.
In $W_P(z)$, the vertices of degree $1203/2500$ uniquely determine the part $C$ of the graphon.
The part $C$ can be uniquely split into disjoint measurable subsets $C_i$, $i\in\NN_0$ with $|C_0|=1/3$ and $|C_i|=2^{-i+1}/3$ as follows. If $x \in C_0$, then $\deg_W^C(x)=0$, and if $x \in C_i$ for $i \in \NN$, then $\deg_W^C(x)=2^{-i}/3$. Inside the tile $C \times C$, the
graphon is non-zero only on the sets $C_i\times C_i$, $i\in\NN$. 
The structure of $W_P$ restricted to $C_i\times C_i$ uniquely determines the value of $z^{M_i}$.
In particular, the values of all $z_i$, $i\in\NN$, are uniquely determined in this way.
It follows that if two graphons $W_P(z)$ and $W_P(z')$ are weakly isomorphic,
then $z=z'$.
\end{proof}

The remainder of this section is devoted to the proof of the following theorem.

\begin{theorem}
\label{thm-ff}
There exist graphs $H_1,\ldots,H_{\ell}$ and an integer $D$ with the following property.
For any bounding sequence $P=(p_i,l_i,u_i)_{i\in\NN}$,
there exists a polynomial $q$ of degree at most $D$ in $\ell$ variables such that the following two statements are equivalent for any graphon $W$.
\begin{itemize}
\item The graphon $W$ is weakly isomorphic to a graphon $W_P(z)$ for some $z\in [0,1]^\NN$ such that
      $p_i(z)\in [l_i,u_i]$ for all $i\in\NN$.
\item It holds that $q(d(H_1,W),\ldots,d(H_\ell,W))=0$.
\end{itemize}
\end{theorem}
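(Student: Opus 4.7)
The plan is to reduce the statement to an application of the standard decorated-constraint machinery developed in~\cite{bib-inf, bib-comp, bib-universal}: we build a finite set of decorated constraints characterizing the graphons in $\WW_P$ up to weak isomorphism, convert each to an ordinary density constraint via Lemma~\ref{lm-decorated}, and combine the resulting finitely many polynomial equalities into a single polynomial equation by summing squares. Universality in $P$ of the graphs $H_1,\ldots,H_\ell$ and of the degree $D$ comes from Lemma~\ref{lm-dep-z}: the general structural skeleton of $W_P(z)$ --- the 14 parts with the prescribed sizes and degrees, and the values on all but a few specific tiles --- is the same for every bounding sequence $P$ and every admissible $z$.

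First, Lemma~\ref{lm-partition} supplies a fixed finite set of ordinary constraints forcing the 14-part structure with the sizes and degrees from Table~\ref{tab-parts}. Next, for each tile whose value is fully determined by the general pattern (essentially every tile in Table~\ref{tab-def-WP} not involving $p_i$, $l_i$, $u_i$, or $W_F$), I would write a fixed finite family of decorated constraints forcing the tile; these use only pair- and triple-root constraints expressing neighborhood identities and are standard. For the tile $D_G\times D_G$, which contains the universal graphon $W_0$ of Theorem~\ref{thm-universal}, I would apply Lemma~\ref{lm-subforcing}. Theorem~\ref{thm-universal} furnishes a universal list of graphs independent of $W_F$, hence of $P$, together with target densities depending on $P$; Lemma~\ref{lm-subforcing} then imports this forcing into the tile $D_G\times D_G$ via a finite set of decorated constraints whose decorated graphs depend only on the graphs of Theorem~\ref{thm-universal}.

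The heart of the argument is the treatment of the $P$- and $z$-dependent tiles $C\times C$, $C\times E$, and $D_G\times E$. Crucially, the subintervals $C_i\subseteq C$, $E_i\subseteq E$, and $D_{G,i}\subseteq D_G$ of length proportional to $2^{-i}$ are detected by the relative degree $\deg^C(x)=2^{-i}/3$ of a root, so a single decorated constraint with a root varying over $C$ can simultaneously assert, for every $i\in\NN$, that (i) the value of $W$ on $C_i\times C_i$ equals $z^{M_i}$ on one sub-block and $1-z^{M_i}$ on the reflected sub-block; (ii) the relative measure of the $W=1$ region on the portions of $C_i\times E_i$ prescribed in Table~\ref{tab-def-WP} equals the polynomial value $p_i(z)$ (which by construction is a linear combination of the monomials $z^{M_j}$ already read off from $C\times C$, with coefficients $\pi_{i,j}$ depending on $P$); and (iii) the relative measures on $D_{G,i}\times E_i$ encode the interval bounds $l_i$ and $u_i$. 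By Lemma~\ref{lm-ff}, any partitioned graphon satisfying all of these constraints is weakly isomorphic to $W_P(z)$ for a unique $z$ with $p_i(z)\in[l_i,u_i]$.

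The main obstacle is to verify that the total set of subgraphs $H_1,\ldots,H_\ell$ arising after applying Lemma~\ref{lm-decorated} is genuinely $P$-independent and that the degrees of the resulting polynomial equations admit a uniform bound. This is delicate but follows by inspecting the construction behind Lemma~\ref{lm-decorated}: the subgraphs produced depend only on the shapes of the decorated graphs used, not on numerical coefficients appearing in the constraints. Since the shapes of all decorated constraints listed above are fixed (only their target values vary with $P$), the list of ordinary subgraphs they produce is fixed too, and the resulting polynomial equations have uniformly bounded degree. Summing the squares of these finitely many polynomial relations yields the single polynomial $q$ of degree at most some universal $D$, with coefficients depending on $P$ through the data $\pi_{i,j},l_i,u_i$ and through the Theorem~\ref{thm-universal} densities $d_i$. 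The equivalence claim is then immediate from the design of the constraints together with Lemma~\ref{lm-ff}.
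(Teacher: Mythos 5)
Your overall architecture---a finite family of decorated constraints whose graphs do not depend on $P$, conversion to ordinary constraints via Lemma~\ref{lm-decorated}, and a sum-of-squares polynomial $q$ of bounded degree---is the same as the paper's, but two central steps are genuinely broken. First, the tile $D_G\times D_G$ of $W_P(z)$ is a rescaled copy of $W_F$, not of $W_0$: the graphon $W_0$ of Theorem~\ref{thm-universal} is spread across all tiles $D_Z\times D_{Z'}$, $Z,Z'\in\{A,\ldots,G\}$, which are seven distinct parts of $W_P$ with distinct degrees, and only its $G\times G$ tile (namely $W_F$) lands in $D_G\times D_G$. Lemma~\ref{lm-subforcing} forces a finitely forcible graphon on a single diagonal tile $A_m\times A_m$, so it cannot be applied to this seven-part block; and applying it to $D_G\times D_G$ itself would amount to forcing $W_F$, which is not finitely forcible by a $P$-independent family (it encodes all of $P$), so the universality of the graphs $H_1,\ldots,H_\ell$ would be lost. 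The paper instead imports the decorated constraints from the proof of Theorem~\ref{thm-universal} in~\cite{bib-universal} with the decorations $X$ replaced by $D_X$ and $P$ by $F$; some device of this kind is needed and your proposal does not supply it.

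Second, your constraints for the variable tiles are phrased as if $z$, the coefficients $\pi_{i,j}$ and the bounds $l_i,u_i$ were data available to the constraints. They are not: a graphon $W$ satisfying the constraints comes with no $z$ attached, and only finitely many numerical constants can occur in finitely many constraints, whereas there are infinitely many $\pi_{i,j}$, $l_i$, $u_i$. The paper's constraints instead force the multiplicative recursion $\alpha^{11}_{k}=\alpha^{11}_{k'}\cdot\alpha^{11}_{k''}$ on $C\times C$ (Figure~\ref{fig-CC}) and then \emph{define} $z_i$ from $W$; the quantities $p_i(z)$, $l_i$ and $u_i$ are never inserted as constants but are read off from the copy of $W_F$ sitting in $D_G\times D_G$ through the tiles $C\times D_G$ and $D_G\times E$ (Figures~\ref{fig-CDG} and~\ref{fig-DGE}, and the last constraint of Figure~\ref{fig-CE}), which is also exactly how the condition $p_i(z)\in[l_i,u_i]$ gets enforced---a point your sketch leaves unexplained. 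Finally, Lemma~\ref{lm-ff} cannot be invoked to conclude that a constraint-satisfying graphon is weakly isomorphic to some $W_P(z)$: it only states that if $W_P(z)$ and $W_P(z')$ are weakly isomorphic then $z=z'$. The identification of an arbitrary graphon satisfying the constraints with some $W_P(z)$ is precisely the tile-by-tile verification (including the coordinate system on $F$, the auxiliary tiles, and the $Q$ and $R$ tiles, which your sketch omits) that constitutes the bulk of the paper's proof.
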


\subsection{Proof of Theorem~\ref{thm-ff} -- general setting}
\label{sub-setting}

Theorem~\ref{thm-ff} follows from the following statement:
there exists a family $\CC$ of ordinary and decorated density constraints such that
\begin{itemize}
\item the graphs appearing in $\CC$ do not depend on $P$, and
\item a graphon $W$ satisfies all constraints in $\CC$
if and only if $W$ is weakly isomorphic to a graphon $W_P(z)$ for some $z\in [0,1]^\NN$, with
$p_i(z)\in [l_i,u_i]$ for all $i\in\NN$.
\end{itemize}
\noindent Indeed, if we find such a family $\CC$, Lemma~\ref{lm-decorated} would imply that
there exists a family $\CC'$ of ordinary density constraints with the same properties.
Each constraint in $\CC'$ can be thought of as being a polynomial $p$ in the densities of graphs appearing in the constraint
such that $p=0$ if and only if the constraint is satisfied.
The sought polynomial $q$ can then be set to be the sum 
of $p^2$ taken over all constraints in $\CC$.
Hence, we focus on finding a family $\CC$ with the above properties.

The constraints forming the family $\CC$ will be presented in this and the following subsections
together with the related parts of the proof of Theorem~\ref{thm-ff}.
The family $\CC$ contains the ordinary density constraints that are satisfied
precisely by graphons with the same part sizes and degrees as graphons $W_P(z)$, $z\in [0,1]^\NN$;
such ordinary density constraints exist by Lemma~\ref{lm-decorated}.
Note that the sizes and the degrees of the parts do not depend on $z\in [0,1]^\NN$ (see the remark after Lemma~\ref{lm-dep-z}).

Suppose that $W$ is a graphon satisfying all constraints in $\CC$.
Then $W$ is a partitioned graphon with parts corresponding to those of $W_P$, and we will write $A,B,C,D_A,\ldots,D_G,E,F,Q,R$ for the parts of $W$.
We will show that there exists a choice of $z\in [0,1]^\NN$ satisfying all constraints implied by the bounding sequence $P$ such that
$W$ and $W_P(z)$ are weakly isomorphic.
To do so, we will show that there exist a vector $z\in [0,1]^\NN$ and a measure preserving map $g:[0,1)\to [0,1)$ such that
$W(x,y)=W_P(z)(g(x),g(y))$ for almost all $(x,y)\in [0,1)^2$.

Let $A^P$, $B^P$, $C^P$, $D^P_A$, \dots, $D^P_G$, $E^P$, $F^P$, $Q^P$, $R^P$ be the half-open subintervals of $[0,1)$
forming the parts of the graphon $W_P$ (we use
the superscripts to make a clear distinction between the parts of $W$ and the parts of $W_P$).
The Monotone Reordering Theorem~\cite[Proposition A.19]{bib-lovasz-book} implies that
for every $X\in\{A,B,D_A,\ldots,D_G,F,Q,R\}$, there exist a measure preserving map $\varphi_X:X\to [0,|X|)$ and
a non-decreasing function $f_X:[0,|X|)\to\RR$ such that
$$f_X(\varphi_X(x))=\deg_W^F(x)=\frac{1}{|F|}\int_F W(x,y)\dd y$$
for almost every $x\in X$.
In addition,
there exist measure preserving maps $\varphi_C:C\to [0,1/25)$ and $\varphi_E:E\to [0,1/25)$ and
non-decreasing functions $f_C:[0,1/25)\to\RR$ and $f_E:[0,1/25)\to\RR$ such that
\begin{equation}
f_C(\varphi_C(x))=900\int_{N_W^A(x)}\deg_W^F(z)\dd z-\deg_W^B(x) \qquad \mbox{and}
\label{eq-fC}
\end{equation}
\begin{eqnarray}
f_E(\varphi_E(x')) & = & 4500\int_{N_W^A(x')}\deg_W^F(z)\dd z-5\deg_W^B(x')-\deg_W^{D_G}(x')\nonumber\\
                   & & +\int_{N_W^C(x')}\deg_W^{D_A}(z)\dd z
\label{eq-fE}
\end{eqnarray}
for almost every $x\in C$ and $x'\in E$.

\begin{figure}
\begin{center}
\epsfbox{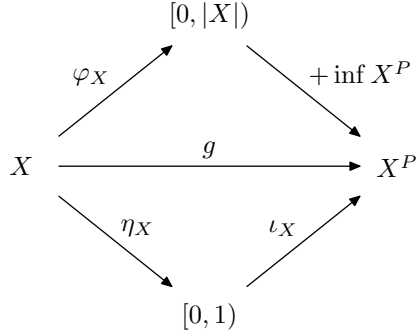}
\end{center}
\caption{The structure of maps used in the proof of Theorem~\ref{thm-ff}.}
\label{fig-maps}
\end{figure}

For $x\in X$, where $X\in\{A,B,C,D_A,\ldots,D_G,E,F,Q,R\}$, we set $g(x)$ to $\inf X^P+\varphi_X(x)$.
Since each $X^P$ is a subinterval of length of $|X|$, $g$ is a measure preserving map from $[0,1)$ to $[0,1)$.
In Subsections~\ref{sub-coord}--\ref{sub-clean}, we will show that $W(x,y)=W_P(g(x),g(y))$ almost everywhere
for a suitable choice of $z\in [0,1]^\NN$.
Finally, for $X\in\{A,B,C,D_A,\ldots,D_G,E,F,Q,R\}$, 
define $\eta_X:X\to[0,1)$ as $\eta_X(x)=\varphi(X)/|X|$.
Note that $\iota_X(\eta_X(x))=g(x)$ for $x\in X$.
The mutual relations of the just defined maps are visualized in Figure~\ref{fig-maps}.

\subsection{Proof of Theorem~\ref{thm-ff} -- coordinate system}
\label{sub-coord}

The half-graphon $W_{\Delta}$ is the graphon such that
$W_{\Delta}(x,y)=1$ if $x+y\ge 1$ and $W_{\Delta}(x,y)=0$, otherwise.
The half-graphon $W_{\Delta}$ is finitely forcible~\cite{bib-diaconis09+,bib-lovasz11+}.
By Lemma~\ref{lm-subforcing}, there exists a collection of decorated constraints that
is satisfied if and only if the tile $F\times F$ is weakly isomorphic to the half-graphon $W_{\Delta}$.
The definitions of $\varphi_F$ and $f_F$ then imply that for almost all $(x,y)\in F\times F$,
$W(x,y)\in\{0,1\}$, and $W(x,y)=1$ iff $f_F(\varphi_F(x))+f_F(\varphi_F(y))\ge 1$.
It follows that $W(x,y)=W_P(g(x),g(y))$ for almost all $(x,y)\in F\times F$ and
$f_F(z)=25z$ for almost every $z\in [0,1/25)$.

\begin{figure}
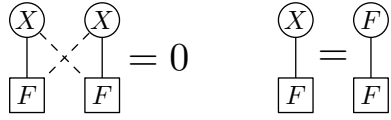

\begin{center}
\epsfbox{exturing-8.mps}
\hskip 10mm
\epsfbox{exturing-9.mps}
\end{center}
\caption{Decorated constraints forcing the structure of the tiles $X\times F$, where $X\in\{A,B,D_A,\ldots,D_G\}$.}
\label{fig-coord}
\end{figure}

Fix $X\in\{A,B,D_A,\ldots,D_G\}$ and consider the two decorated constraints depicted in Figure~\ref{fig-coord}. The first constraint implies that $W(x,y) \in \{0,1\}$ for almost all $(x,y)\in X\times F$. In particular, for almost all $y \in F$, it holds that $W(x,y) \in \{0,1\}$ for almost all $x \in X$. Furthermore, for almost every pair $y,y' \in F$, we have $N_X(y) \sqsubseteq N_X(y')$ or $N_X(y') \sqsubseteq N_X(y)$, which implies that for almost all $y \in F$, it holds that for almost every $y' \in F$, we have $N_X(y) \sqsubseteq N_X(y')$ or $N_X(y') \sqsubseteq N_X(y)$. 
The second constraint implies that $\deg^X(y)=\deg^F(y)=f_F(\varphi_X(y))=25\varphi_X(y)$ for almost every $y\in F$. Let $F'$ consist of those points in $y \in F$ for which
\begin{itemize}
	\item $W(x,y) \in \{0,1\}$ for almost all $x \in X$,
	\item $N_X(y) \sqsubseteq N_X(y')$ or $N_X(y') \sqsubseteq N_X(y)$ for almost all $y' \in F$, and
	\item $\deg^X(y)=25\varphi_F(y)$,
\end{itemize} 
and note that $|F\setminus F'|=0$. 
Fix $y \in F'$, and let $N=N_X(y)$ and $d=\deg^X(y)=|N|=f_F(\varphi_F(y))$. 
For almost every $y' \in F$ with $\varphi_F(y')>\varphi_F(y)$, we have that
$\deg^X(y')=25\varphi_F(y')>d$ and $N_X(y') \sqsupseteq N_X(y)$,
which implies that $W(x,y')=1$ for almost every $x \in N$. Since the measure of points in $F$ with $\varphi_F(y')>\varphi_F(y)$ is $\frac{1-d}{|F|}$, 
this implies that for almost every $x \in N$, we have $d_F(x) \ge 1-d$. 
An analogous argument implies that for almost every $x \in X \setminus N$, $d_F(x) \le 1-d$.
Since $f_X(\varphi_X(x))=\deg_F(x)$ for almost every $x \in X$, we have obtained that
for almost all $(x,y)\in X\times F$, $W(x,y)=1$ iff $f_X(\varphi_X(x))+f_F(\varphi_F(y))\ge 1$.
It follows that $W(x,y)=W_P(g(x),g(y))$ for almost all $(x,y)\in X\times F$ and
that $f_X(z)=25z$ for almost every $z\in [0,1/25)$.
In particular, it holds that $\eta_X(X)=f_X(\varphi_X(x))$ for $x\in X$ and $X\in\{A,B,D_A,\ldots,D_G,F\}$,
i.e., we can think of the relative degree $\deg^F(x)=\eta_X(x)$ as the coordinate of the vertex $x\in X$.

\subsection{Proof of Theorem~\ref{thm-ff} -- black box}
\label{sub-blackbox}

We now inspect the proof of Theorem~\ref{thm-universal} presented in~\cite{bib-universal};
the graphon $W_0$ from the statement of the theorem is depicted in Figure~\ref{fig-universal}.
The proof starts by presenting constraints that introduce the partition and the coordinate system
analogously to Subsections~\ref{sub-setting} and~\ref{sub-coord}.
In particular, the relative degrees of vertices in the graphon $W_0$ from Theorem~\ref{thm-universal} with respect to the part $P$
play the role of the coordinates that we have introduced in Subsection~\ref{sub-coord}.
The proof of Theorem~\ref{thm-universal} then continues by forcing the structure of the tiles
$X\times Y$, $X,Y\in\{A,\ldots,G\}$, of the graphon $W_0$.
This is done in Sections 3--5 of~\cite{bib-universal} with the empty tiles $B\times G$, $C\times G$ and $D\times G$
handled at the beginning of Section 6.
We now include all decorated constraints depicted in Figures 5, 6, 8--11, 13, 15, 16, 18, 20 and 21 in~\cite{bib-universal}, and
the first constraint in Figure 22 from~\cite{bib-universal} to the family $\CC$
with the decoration $X$, $X\in\{A,\ldots,G\}$, replaced with $D_X$, and
the decoration $P$ replaced with $F$.
The arguments presented in Sections 3--5 of~\cite{bib-universal} apply in the same way in our setting.
In particular, we get that $W(x,y)=W_P(g(x),g(y))$ for almost all $(x,y)\in X\times Y$ and all $X,Y\in\{D_A,\ldots,D_G\}$.

\subsection{Proof of Theorem~\ref{thm-ff} -- auxiliary tiles}
\label{sub-aux}

In this section, we present the constraints forcing most of the tiles whose structure
does not depend on either $P$ or $z$.
We start by forcing the density of the tiles that are zero to be zero,
which is enforced by the constraints in Figure~\ref{fig-zero}.

\begin{figure}
\begin{center}
\epsfbox{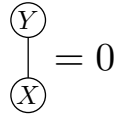}
\end{center}
\caption{Decorated constraints forcing the zero tiles; $(X,Y)$ is one of the pairs
         $(C,D_D)$, $(C,D_E)$, $(C,D_F)$, $(C,F)$, $(E,D_A)$, $(E,D_B)$, $(E,D_C)$, $(E,D_D)$, $(E,D_E)$, $(E,D_F)$,
	 $(E,E)$, $(E,F)$ and $(R,R)$.}
\label{fig-zero}
\end{figure}

\begin{figure}
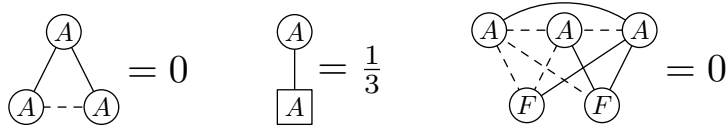

\begin{center}
\epsfbox{exturing-11.mps}
\hskip 10mm
\epsfbox{exturing-10.mps}
\hskip 10mm
\epsfbox{exturing-12.mps}
\end{center}
\caption{Decorated constraints forcing the structure of the tile $A\times A$.}
\label{fig-AA}
\end{figure}

Consider the decorated constraints depicted in Figure~\ref{fig-AA}.
The first constraint implies that the following two properties hold for almost every $x \in A$:
\begin{itemize}
\item $W(y,z)=1$ for almost any pair $(y,z) \in N_W^A(x) \times N_W^{A}(x)$,
\item $W(y,z)=0$ for almost any pair $(y,z) \in N_W^A(x) \times N_{1-W}^A(x)$.
\end{itemize}
Let $Z$ be the set of $x\in A$ that have these two properties.
In particular, the sets $N_W^{A}(x)$ and $N_{1-W}^A(x)$ intersect in a set of measure zero for every $x\in Z$,
which implies that $W(x,y)$ is equal to $0$ or $1$ for every $x \in Z$ and almost every $y\in A$.
Furthermore, for any $x,x' \in Z$, the sets $N_W^A(x)$ and $N_W^A(x')$ either differ on a set of measure zero, or their intersection has measure zero. Also note that given $x \in Z$, the set of $x' \in Z$ such that $N_W^A(x)$ and $N_W^A(x')$ differ on a set of measure zero is a measurable set.

Since having the same neighborhood up to measure zero is an equivalence relation,
this implies that we can partition $Z$ into a collection $\JJ$ of disjoint measurable subsets so that
if $x$ and $x'$ belong to the same set,
then their neighborhoods differ on a set of measure zero, and
if they belong to different sets in $\JJ$,
then their neighborhoods intersect in a set of measure zero.
Furthermore, the conditions imply that for $x \in Z$, almost every $y \in N_W^{Z}(x)$ has the property that $N_W^Z(x)$ and $N_W^Z(y)$ differ on a set of measure zero. 
This implies that for almost every $x,y\in A$:
$W(x,y)=1$ if and only if there exists $J\in\JJ$ that contains both $x$ and $y$.
The second constraint implies that almost every $x\in A$ belongs to some $J\in\JJ$ and the measure of $J$ is $|A|/3$.
Hence, $\JJ$ contains exactly three sets and each has measure $|A|/3$.

Finally, the last constraint implies that for each $J \in \JJ$ and for almost all $x \in A$,
if the measure of $x'\in J$ with $\eta_A(x')<\eta_A(x)$ is positive and
the measure of $x''\in J$ with $\eta_A(x'')>\eta_A(x)$ is positive,
then $x\in J$.
This implies that each $J\in\JJ$ differs from a preimage of an interval under $\eta_A$ in a set of measure zero.
Since each set in $\JJ$ has measure $1/3$,
the three sets contained in $\JJ$ differ from $\eta_A^{-1}([0,1/3))$, $\eta_A^{-1}([1/3,2/3))$ and
$\eta_A^{-1}([2/3,1))$ on a set of measure zero.
It follows that $W(x,y)=W_P(g(x),g(y))$ for almost all $(x,y)\in A\times A$.

\begin{figure}
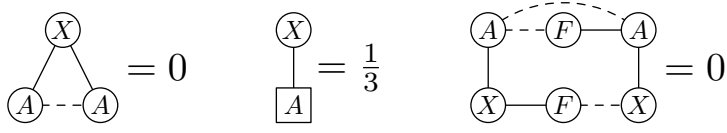

\begin{center}
\epsfbox{exturing-13.mps}
\hskip 10mm
\epsfbox{exturing-14.mps}
\hskip 10mm
\epsfbox{exturing-15.mps}
\end{center}
\caption{Decorated constraints forcing the structure of the tiles $A\times X$, where $X\in\{B,C,D_A,\ldots,D_G,E\}$.}
\label{fig-AX}
\end{figure}

Fix $X\in\{B,C,D_A,\ldots,D_G,E\}$ and consider the decorated constraints depicted in Figure~\ref{fig-AX}.
The first constraint implies that almost every $x \in X$ has a neighborhood that is almost entirely contained in $\eta_A^{-1}([0,1/3))$, $\eta_A^{-1}([1/3,2/3))$, or $\eta_A^{-1}([2/3,1))$.
This implies that there exist disjoint measurable subsets $J_1$, $J_2$ and $J_3$ of $X$ such that
$N^X(x)\sqsubseteq J_1$ for almost every $x\in\eta_A^{-1}([0,1/3))$,
$N^X(x)\sqsubseteq J_2$ for almost every $x\in\eta_A^{-1}([1/3,2/3))$ and
$N^X(x)\sqsubseteq J_3$ for almost every $x\in\eta_A^{-1}([2/3,1))$.
The second constraint implies that $\deg^X(x)=1/3$ for almost every $x\in A$,
which implies that $|J_1|=|J_2|=|J_3|=1/3$ and, up to a set of measure zero,
$W(x,y)=1$ for $(x,y)\in A\times X$ if and only if either $\eta_A(x)\in [0,1/3)$ and $y\in J_1$, or
$\eta_A(x)\in [1/3,2/3)$ and $y\in J_2$, or $\eta_A(x)\in [2/3,1)$ and $y\in J_3$.
The last constraint trivially holds if $X=C$ or $X=E$, since in this case the tile $X \times F$ is $0$ almost everywhere.
If $X\not\in\{C,E\}$, the constraint implies that 
$\eta_X(y_1)\le\eta_X(y_2)\le\eta_X(y_3)$ for almost any $y_1\in J_1$, $y_2\in J_2$ and $y_3\in J_3$.
Hence, we can assume that $J_1=\eta_X^{-1}([0,1/3))$, $J_2=\eta_X^{-1}([1/3,2/3))$ and $J_3=\eta_X^{-1}([2/3,1))$.
We conclude that $W(x,y)=W_P(g(x),g(y))$ for almost all $(x,y)\in A\times X$,
where $X\in\{B,D_A,\ldots,D_G\}$.

If $X\in\{C,E\}$, we argue as follows. 
For $x\in X$, the first integral in \eqref{eq-fC} and \eqref{eq-fE} can have a value of $1/450$, $3/450$ or $5/450$.
The differences between these values after multiplying by 900 or 4500, respectively,
are larger than the maximum possible variation of the rest of the expressions in \eqref{eq-fC} and \eqref{eq-fE} (it is easy to see that they vary within an interval of length $1$ and $7$, respectively).
It follows that $J_1=\eta_X^{-1}([0,1/3))$, $J_2=\eta_X^{-1}([1/3,2/3))$ and $J_3=\eta_X^{-1}([2/3,1))$,
which implies that $W(x,y)=W_P(g(x),g(y))$ for almost all $(x,y)\in A\times X$, $X\in\{C,E\}$.

\begin{figure}
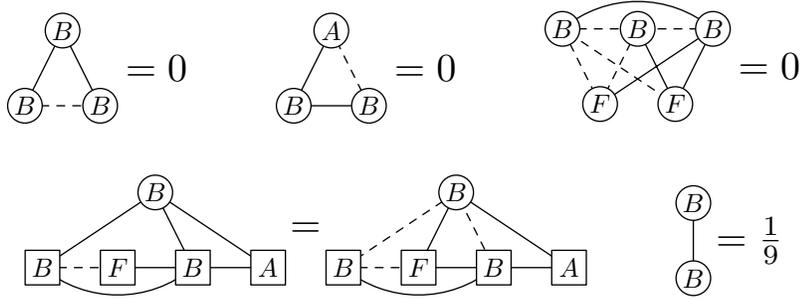

\begin{center}
\epsfbox{exturing-17.mps}
\hskip 10mm
\epsfbox{exturing-18.mps}
\hskip 10mm
\epsfbox{exturing-19.mps}
\vskip 6mm
\epsfbox{exturing-20.mps}
\hskip 10mm
\epsfbox{exturing-21.mps}
\end{center}
\caption{Decorated constraints forcing the structure of the tile $B\times B$.}
\label{fig-BB}
\end{figure}

We now turn our attention to the tile $B\times B$ and
consider the decorated constraints depicted in Figure~\ref{fig-BB}.
We start with the three constraints on the first line.
Following the arguments presented for the first constraint in Figure~\ref{fig-AA},
we conclude that
there exists a collection $\JJ$ of disjoint measurable subsets of $B$ with positive measure such that 
for almost every $x,y\in B$,
$W(x,y)=1$ if and only if there exists $J\in\JJ$ that contains both $x$ and $y$.
The second constraint implies that $J\sqsubseteq\eta_B^{-1}([0,1/3))$, $J\sqsubseteq\eta_B^{-1}([1/3,2/3))$, or
$J\sqsubseteq\eta_B^{-1}([2/3,1))$ for each $J\in\JJ$.
Finally, the third constraint implies that for each $J\in\JJ$,
there exists an interval $J'\subseteq [0,1)$ such that $J$ and $\eta_B^{-1}(J')$
differ on a set of measure zero (by the same argument as for the tile $A\times A$).
Let $\JJ'$ be the set of such intervals $J'$ for each $J\in\JJ$.
Note that each interval in $\JJ'$ is a subinterval of $[0,1/3)$, $[1/3,2/3)$ or $[2/3,1)$.
The first constraint on the second line implies that
if $J'=[a,b)$ is a subinterval of $[(i-1)/3,i/3)$, $i\in [3]$, then
$$b-a=i/3-b\,\mbox{.}$$
Since the density of the tile $B \times B$ is equal to the sum of $|J'|^2$ for $J'\in\JJ'$, the density of $B \times B$ is at most $1/9$, with equality if and only if
each $J'\in\JJ'$ is of the form
$[(i-2^{-(j-1)})/3,(i-2^{-j})/3)$ for some $i\in [3]$ and $j\in\NN$.
We conclude from the last constraint that $W(x,y)=W_P(g(x),g(y))$ for almost all $(x,y)\in B\times B$.

\begin{figure}
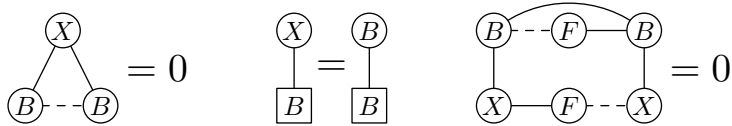

\begin{center}
\epsfbox{exturing-22.mps}
\hskip 10mm
\epsfbox{exturing-23.mps}
\hskip 10mm
\epsfbox{exturing-24.mps}
\end{center}
\caption{Decorated constraints forcing the structure of the tiles $B\times X$, where $X\in\{C,D_A,\ldots,D_G,E\}$.}
\label{fig-BX}
\end{figure}

Fix $X\in\{C,D_A,\ldots,D_G,E\}$ and consider the decorated constraints depicted in Figure~\ref{fig-BX}.
Let $\JJ'$ be the set containing the intervals from the analysis of the tile $B\times B$.
Following the arguments for Figure \ref{fig-AX}, the first constraint implies that for each $J'\in\JJ'$,
there exists a measurable subset $J$ such that
$N^X(x)\sqsubseteq J$ for each $x\in\eta_B^{-1}(J')$. Furthermore,
the subsets $J$ are disjoint for different $J'\in\JJ'$.
The second constraint implies that $\deg^X(x)=\deg^B(x)=|J'|$ for almost every $x\in\eta_B^{-1}(J')$,
which implies that $|J|=|J'|\cdot |X|$ for every $J'\in\JJ'$ (since the sum of the measures of the intervals in $\JJ'$ is one), and
$N^X(x)\sqsupseteq J$.
If $X\in\{D_A,\ldots,D_G\}$, the last constraint implies that
each $J$ is a preimage of an interval and these intervals follow the order of the intervals in $\JJ'$ (note that as before,
this constraint trivially holds if $X=C$ or $X=E$).
It follows that $J$ and $\eta_X^{-1}(J')$ differ on a set of measure zero.
We conclude that $W(x,y)=W_P(g(x),g(y))$ for almost all $(x,y)\in B\times X$,
where $X\in\{D_A,\ldots,D_G\}$.
If $X=C$, then \eqref{eq-fC} implies that $J$ is a preimage of an interval and
these intervals follow the order of the intervals in $\JJ'$,
which again leads to the conclusion that $W(x,y)=W_P(g(x),g(y))$ for almost all $(x,y)\in B\times C$.
The analysis of the tile $B\times E$ will be finished in Subsection~\ref{sub-poly}.

\begin{figure}
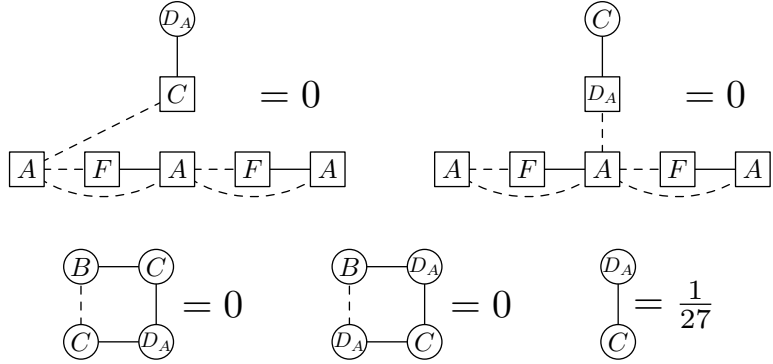

\begin{center}
\epsfbox{exturing-25.mps}
\hskip 10mm
\epsfbox{exturing-26.mps}
\vskip 6mm
\epsfbox{exturing-27.mps}
\hskip 10mm
\epsfbox{exturing-28.mps}
\hskip 10mm
\epsfbox{exturing-29.mps}
\end{center}
\caption{Decorated constraints forcing the structure of the tile $C\times D_A$.}
\label{fig-CDA}
\end{figure}

We now analyze the tile $C\times D_A$.
Consider the decorated constraints depicted in Figure~\ref{fig-CDA}.
Let $a_1$, $a_2$ and $a_3$ be the three $A$-roots from the constraints on the first line in the figure.
Almost any choice of the roots satisfies that $\eta_A(a_1)<\eta_A(a_2)<\eta_A(a_3)$.
Since the three roots are non-adjacent, it follows that almost any choice of them satisfies that
$\eta_A(a_i)\in [(i-1)/3,i/3)$, $i\in [3]$, because of the structure of the tile $A\times A$.
For the first constraint, the structure of the tile $A\times C$ implies that
almost any choice of the $C$-root $x$ satisfies that $\eta_C(x)\in [1/3,1)$.
Likewise, the structure of the tile $A\times D_A$ implies that
almost any choice of the $D_A$-root $y$ in the second constraint satisfies that $\eta_{D_A}(y)\in [0,1/3)\cup [2/3,1)$.
It follows that $W(x,y)=0$ for almost all $(x,y)\in (C\times D_A)\setminus(\eta_C^{-1}([0,1/3))\times\eta_{D_A}^{-1}([1/3,2/3)))$.

We now focus on the constraints on the second line in Figure~\ref{fig-CDA}.
The first constraint implies that
the neighborhood of almost every $y \in D_A$ is contained in the set
$$\{x \in C\mbox{ such that }\lfloor3\eta_C(x)\rfloor=j\mbox{ and }\coord{3\eta_C(x)}=k\}$$
for some integers $j$ and $k$.
Note that by the previous paragraph we must have $j=0$.
This implies that there exist disjoint measurable subsets $J_i\subseteq D_A$, $i\in\NN$, such that
$N^{D_A}(x)\sqsubseteq J_{\coord{3\eta_C(x)}}$ for almost every $x\in C$.
Likewise, the second constraint implies that there exist disjoint measurable subsets $J'_i\subseteq C$, $i\in\NN$, such that
$N^{C}(y)\sqsubseteq J'_{\coord{3\eta_{D_A}(y)}}$ for almost every $y\in D_A$.
Hence, there exists a function $f:\NN\to\NN_0$ that is injective on $f^{-1}(\NN)$ such that
the following holds for almost every $(x,y)\in C\times D_A$:
$W(x,y)>0$ only if $f(\coord{3\eta_C(x)})=\coord{3\eta_{D_A}(y)}$.
However, the last constraint on the second line can hold only if $f(i)=i$ for all $i\in\NN$ (otherwise, the value would be strictly less than $1/27$).
It follows that $W(x,y)=W_P(g(x),g(y))$ for almost all $(x,y)\in C\times D_A$.
In a very analogous way, the constraints depicted in Figure~\ref{fig-CDB}
guarantee that $W(x,y)=W_P(g(x),g(y))$ for almost all $(x,y)\in C\times D_B$.

\begin{figure}
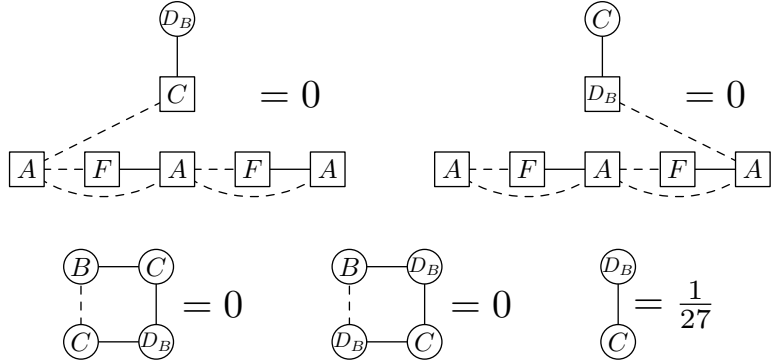

\begin{center}
\epsfbox{exturing-30.mps}
\hskip 10mm
\epsfbox{exturing-31.mps}
\vskip 6mm
\epsfbox{exturing-32.mps}
\hskip 10mm
\epsfbox{exturing-33.mps}
\hskip 10mm
\epsfbox{exturing-34.mps}
\end{center}
\caption{Decorated constraints forcing the structure of the tile $C\times D_B$.}
\label{fig-CDB}
\end{figure}

\begin{figure}
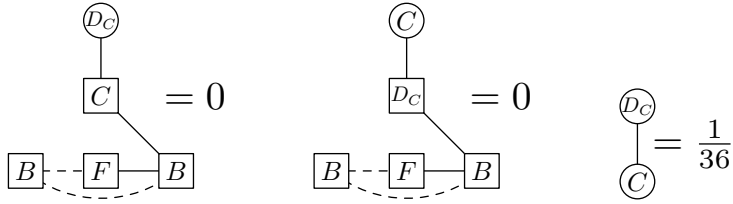

\begin{center}
\epsfbox{exturing-35.mps}
\hskip 10mm
\epsfbox{exturing-36.mps}
\hskip 10mm
\epsfbox{exturing-37.mps}
\end{center}
\caption{Decorated constraints forcing the structure of the tile $C\times D_C$.}
\label{fig-CDC}
\end{figure}

We now consider the decorated constraints depicted in Figure~\ref{fig-CDC} and analyze the tile $C\times D_C$.
Let $b_1$ and $b_2$ be the two $B$-roots in the first constraint.
The choices of the roots vary through almost all pairs $(b_1,b_2)\in B\times B$ such that
either $\lfloor 3\eta_B(b_1)\rfloor<\lfloor 3\eta_B(b_2)\rfloor$, or
$\lfloor 3\eta_B(b_1)\rfloor=\lfloor 3\eta_B(b_2)\rfloor$ and $\coord{3\eta_B(b_1)}<\coord{3\eta_B(b_2)}$.
Hence, the first constraint implies that $\deg^{D_C}(x)=0$ for any $x\in C$ with $\eta_C(x)\in [1/6,1)$.
Similarly, the second constraint implies that $\deg^C(y)=0$ for any $y\in D_C$ with $\eta_{D_C}(y)\in [1/6,1)$.
Since the density of the tile $C\times D_C$ is $1/36$ by the third constraint,
it follows that $W(x,y)=1$ for almost every $(x,y)\in\eta_C^{-1}([0,1/6))\times\eta_{D_C}^{-1}([0,1/6))$, and
$W(x,y)=0$ for almost every $(x,y)\in (C\times D_C)\setminus (\eta_C^{-1}([0,1/6))\times\eta_{D_C}^{-1}([0,1/6)))$.
We conclude that $W(x,y)=W_P(g(x),g(y))$ for almost all $(x,y)\in C\times D_C$.

In the subsequent sections, we finish the proof by forcing the structure of the tiles $C\times C$, $C\times D_G$, $C\times E$ and $D_G\times E$, forcing all tiles involving parts $Q$ and $R$, and finishing the analysis of the tile $B\times E$.

\subsection{Proof of Theorem~\ref{thm-ff} -- bounding polynomials}
\label{sub-poly}

\begin{figure}
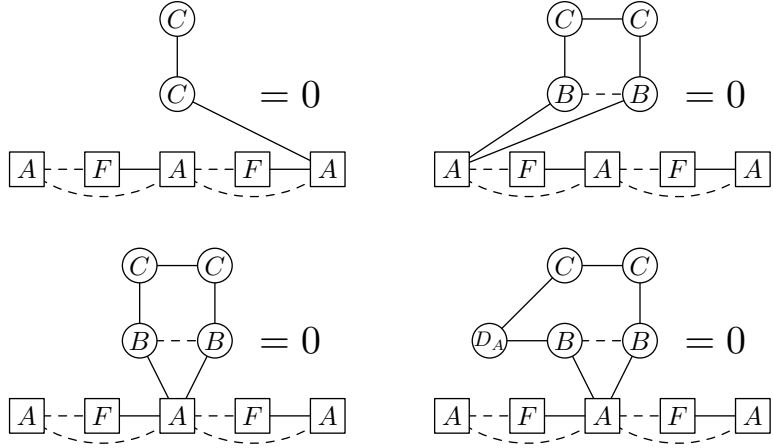

\begin{center}
\epsfbox{exturing-38.mps}
\hskip 10mm
\epsfbox{exturing-39.mps}
\vskip 6mm
\epsfbox{exturing-40.mps}
\hskip 10mm
\epsfbox{exturing-41.mps}
\end{center}
\caption{Decorated constraints forcing the general structure of the tile $C\times C$.}
\label{fig-CC-gen}
\end{figure}

We next force some general structure of the tiles $C\times C$, 
$C\times E$ and $D_G\times E$.
Consider the decorated constraints depicted in Figure~\ref{fig-CC-gen}.
In all four constraints, almost every choice of the $A$-roots $a_1$, $a_2$ and $a_3$
satisfies that $\eta_A(a_1)\in [0,1/3)$, $\eta_A(a_2)\in [1/3,2/3)$ and $\eta_A(a_3)\in [2/3,1)$.
The first constraint then implies that $\deg^C(x)=0$ for almost every $x\in\eta_C^{-1}([2/3,1))$.
In the second constraint, almost any pair of non-adjacent $B$-vertices $b_1$ and $b_2$ adjacent to $a_1\in\eta_A^{-1}([0,1/3))$ satisfies that $\coord{3\eta_B(b_1)}\not=\coord{3\eta_B(b_2)}$.
It follows that $W(x,y)=0$ for almost every $(x,y)\in\eta_C^{-1}([0,1/3))\times\eta_C^{-1}([0,1/3))$
such that $\coord{3\eta_C(x)}\not=\coord{3\eta_C(y)}$.
Similarly, the first constraint on the second line yields that
$W(x,y)=0$ for almost every $(x,y)\in\eta_C^{-1}([1/3,2/3))\times\eta_C^{-1}([1/3,2/3))$
such that $\coord{3\eta_C(x)}\not=\coord{3\eta_C(y)}$.
Finally, in the last constraint, almost any neighbor $z\in D_A$ of $b_1$ satisfies that
$\eta_{D_A}(z)\in [1/3,2/3)$ and $\coord{3\eta_{D_A}(z)}=\coord{3\eta_B(b_1)}$.
It follows that $W(x,y)=0$ for almost every $(x,y)\in\eta_C^{-1}([0,1/3))\times\eta_C^{-1}([1/3,2/3))$
such that $\coord{3\eta_C(x)}\not=\coord{3\eta_C(y)}$.
We conclude that $W(x,y)=0$ for almost every pair $(x,y)\in C\times C$,
with the possible exception of those $(x,y)$ such that
$\eta_C(x)\in [0,2/3)$, $\eta_C(y)\in [0,2/3)$, and $\coord{3\eta_C(x)}=\coord{3\eta_C(y)}$.

\begin{figure}
\begin{center}
\epsfbox{exturing-42.mps}
\hskip 10mm
\epsfbox{exturing-43.mps}
\vskip 6mm
\epsfbox{exturing-44.mps}
\hskip 10mm
\epsfbox{exturing-45.mps}
\vskip 6mm
\epsfbox{exturing-46.mps}
\hskip 10mm
\epsfbox{exturing-47.mps}
\end{center}
\caption{Decorated constraints forcing the general structure of the tile $C\times E$.}
\label{fig-CE-gen}
\end{figure}

\begin{figure}
\begin{center}
\epsfbox{exturing-48.mps}
\hskip 10mm
\epsfbox{exturing-49.mps}
\vskip 6mm
\epsfbox{exturing-50.mps}
\hskip 10mm
\epsfbox{exturing-51.mps}
\end{center}
\caption{Decorated constraints forcing the general structure of the tile $D_G\times E$.}
\label{fig-DGE-gen}
\end{figure}

In Subsection~\ref{sub-aux},
we showed that
there exist disjoint measurable subsets $J_{i,j}\subseteq E$, $i\in [3]$ and $j\in\NN$, with $|J_{i,j}|=2^{-j}/3$ such that
the following holds for almost every $x\in B$:
$W(x,y)=1$ for almost every $y\in J_{\lfloor 3\eta_B(x)\rfloor+1,\coord{3\eta_B(x)}}$ and
$W(x,y)=0$ for almost every $y\not\in J_{\lfloor 3\eta_B(x)\rfloor+1,\coord{3\eta_B(x)}}$.
Following the lines of the analysis of the constraints in Figure~\ref{fig-CC-gen} with respect to the tile $C\times C$,
we conclude that the decorated constraints depicted in Figure~\ref{fig-CE-gen} imply that
$W(x,y)=0$ for almost every pair $(x,y)\in C\times E$ that does not satisfy that
$\eta_C(x)\in [0,2/3)$ and $y\in J_{1,\coord{3\eta_C(x)}}\cup J_{2,\coord{3\eta_C(x)}}$.
Similarly, the decorated constraints depicted in Figure~\ref{fig-DGE-gen} imply that
$W(x,y)=0$ for almost every pair $(x,y)\in D_G\times E$ that does not satisfy that
$\eta_{D_G}(x)\in [1/3,1)$ and $y\in J_{1,\coord{3\eta_{D_G}(x)}}$.

We are now ready to finish the analysis of $B \times E$.
The structure of the tiles $C \times E$ and $D_G \times E$ implies that for almost every $x\in E$,
$\deg_W^C(x) \le \deg_W^B(x)$ and $\deg_W^{D_G}(x)\le 2\deg_W^B(x)$. This implies that 
\[5\deg_W^B(x)+\deg_W^{D_G}(x)\nonumber -\int_{N_W^C(x)}\deg_W^{D_A}(z)\dd z \in [4\deg_W^B(x),7\deg_W^B(x)]\]
for almost every $x\in E$.
In particular, for almost all $x\in J_{i,j}$ and $x'\in J_{i,j'}$ with $i\in [3]$ and $j<j'$, the intervals $[4\deg_W^B(x),7\deg_W^B(x)]$ and $[4\deg_W^B(x'),7\deg_W^B(x')]$ are disjoint, since 
$\deg_W^B(x') \ge 2\deg_W^B(x)$.
This implies that $\eta_E^{-1}(J_{i,j})$ and $[(i-2^{-j-1})/3,(i-2^{-j})/3)$ differ on a set of measure zero for all $i\in [3]$ and $j\in\NN$.
Consequently, $W(x,y)=W_P(g(x),g(y))$ for almost all $(x,y)\in B\times E$.

\begin{figure}
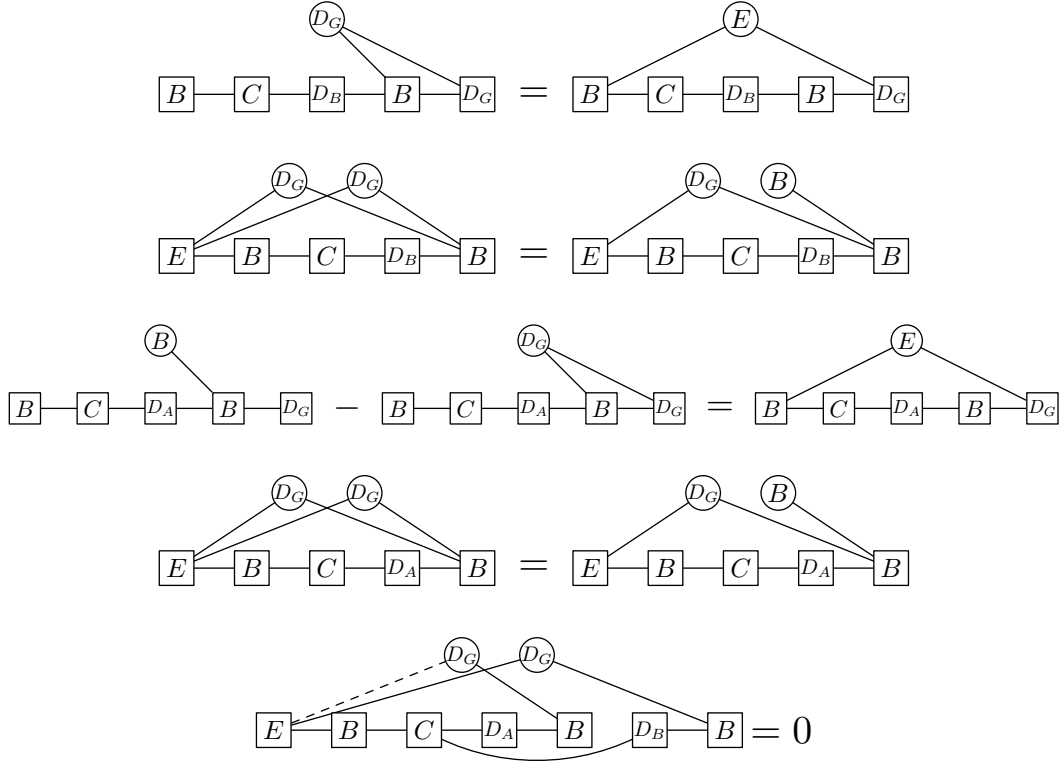

\begin{center}
\epsfbox{exturing-53.mps}
\vskip 6mm
\epsfbox{exturing-55.mps}
\vskip 6mm
\epsfxsize 140mm
\epsfbox{exturing-52.mps}
\vskip 6mm
\epsfbox{exturing-54.mps}
\vskip 6mm
\epsfbox{exturing-56.mps}
\end{center}
\vskip -1mm
\caption{Decorated constraints forcing the specific structure of the tile $D_G\times E$.}
\label{fig-DGE}
\end{figure}

We now analyze the decorated constraints depicted in Figure~\ref{fig-DGE}.
In the first constraint, almost every choice of the $B$-roots $b_1$ and $b_2$ and the $D_G$-root $d$
satisfies that $\eta_B(b_1)\in [0,1/3)$, $\eta_B(b_2)\in [2/3,1)$, $\eta_{D_G}(d)\in [2/3,1)$, and
$\coord{3\eta_B(b_1)}=\coord{3\eta_B(b_2)}=\coord{3\eta_{D_G}(d)}$;
fix such a choice of roots and let $i$ be the common value of $\coord{3\eta_B(b_1)}=\coord{3\eta_B(b_2)}=\coord{3\eta_{D_G}(d)}$.
Together with the structure of $D_G \times D_G$, the constraint implies that $\deg^E(x)=l_i\cdot 2^{-i}/3$ for almost every $x\in\eta_{D_G}^{-1}([2/3,1))$
where $i=\coord{3\eta_{D_G}(d)}$ (the value $l_i$ is given by the bounding sequence $P$).

In the second constraint, almost every choice of the $E$-root $e$ and the two $B$-roots $b_1$ and $b_2$
satisfies that $\eta_E(e)\in [0,1/3)$, $\eta_B(b_1)\in [0,1/3)$, $\eta_B(b_2)\in [2/3,1)$, and
$\coord{3\eta_E(e)}=\coord{3\eta_B(b_1)}=\coord{3\eta_B(b_2)}$.
Let $\alpha$ be the relative degree of $e$ with respect to $\eta_{D_G}^{-1}([2/3,1))$.
The constraint implies that $\alpha^2=\alpha 2^{-i}$, where $i=\coord{3\eta_B(b_2)}$.
Therefore, the constraint is satisfied if and only if $\alpha\in\{0,2^{-i}\}$.
Hence, almost every vertex $x\in\eta_E^{-1}([0,1/3))$ satisfies that
either $W(x,y)=1$ for almost every $y\in\eta_{D_G}^{-1}([2/3,1))$ with $\coord{3\eta_E(x)}=\coord{3\eta_{D_G}(y)}$, or
$W(x,y)=0$ for almost every $y\in\eta_{D_G}^{-1}([2/3,1))$ with $\coord{3\eta_E(x)}=\coord{3\eta_{D_G}(y)}$.
It follows that there exist $J_{1,i}^l\subseteq J_{1,i}$, $i\in\NN$, such that
$|J_{1,i}^l|=l_i  \cdot |J_{1,i}|$ and
for almost every $x\in\eta_E^{-1}([0,1/3))$ and $y\in\eta_{D_G}^{-1}([2/3,1))$
it holds that $W(x,y)=1$ if and only if $x\in J_{1,\coord{3\eta_{D_G}(y)}}^l$, and
$W(x,y)=0$ otherwise.

An analogous argument applies with respect to the third and the fourth constraints;
note that the values on the densities in the diagonal squares in the $\eta_{D_G}^{-1}([1/3,2/3))\times \eta_{D_G}^{-1}([1/3,2/3))$ tiles are $1-u_i$.
Hence, the third and the fourth constraints imply
the existence of $J_{1,i}^u\subseteq J_{1,i}$, $i\in\NN$, such that
$|J_{1,i}^u|=u_i\cdot |J_{1,i}|$ and
for almost every $x\in\eta_E^{-1}([0,1/3))$ and $y\in\eta_{D_G}^{-1}([1/3,2/3))$
it holds $W(x,y)=1$ if and only if $x\in J_{1,\coord{3\eta_{D_G}(y)}}^u$ and
$W(x,y)=0$ otherwise.

We next consider the final constraint in Figure~\ref{fig-DGE}.
Almost every choice of the $E$-root $e$ and the $B$-roots $b_1$, $b_2$ and $b_3$ satisfies that
$\eta_E(e)\in [0,1/3)$, $\eta_B(b_i)\in [(i-1)/3,i/3)$, $i\in\{1,2,3\}$, and
$\coord{3\eta_E(e)}=\coord{3\eta_B(b_1)}=\coord{3\eta_B(b_2)}=\coord{3\eta_B(b_3)}$.
Hence, the constraint is satisfied if and only if $J_{1,i}^l\sqsubseteq J_{1,i}^u$ for all $i\in\NN$.
We now observe that the third term in \eqref{eq-fE} for almost every $x\in E$
is $2^{-i+1}/3$ (if $x\in J_{1,i}^l$), $2^{-i}/3$ (if $x\in J_{1,i}^u\setminus J_{1,i}^l$) or $0$ (if $x\not\in J_{1,i}^u$)
where $i=\coord{3\eta_E(x)}$.
In particular, it dominates the last term in \eqref{eq-fE}, which cannot exceed $2^{-2i+2}/9$.
It follows that
for every $i\in\NN$
it holds $\eta_E(x)\le \eta_E(x')\le \eta_E(x'')$
for almost all $x\in J_{1,i}^l$, $x'\in J_{1,i}^u\setminus J_{1,i}^l$ and $x''\in J_{1,i}\setminus J_{1,i}^u$.
Consequently,
it holds that $W(x,y)=W_P(g(x),g(y))$ for almost all $(x,y)\in E\times D_G$.

\begin{figure}
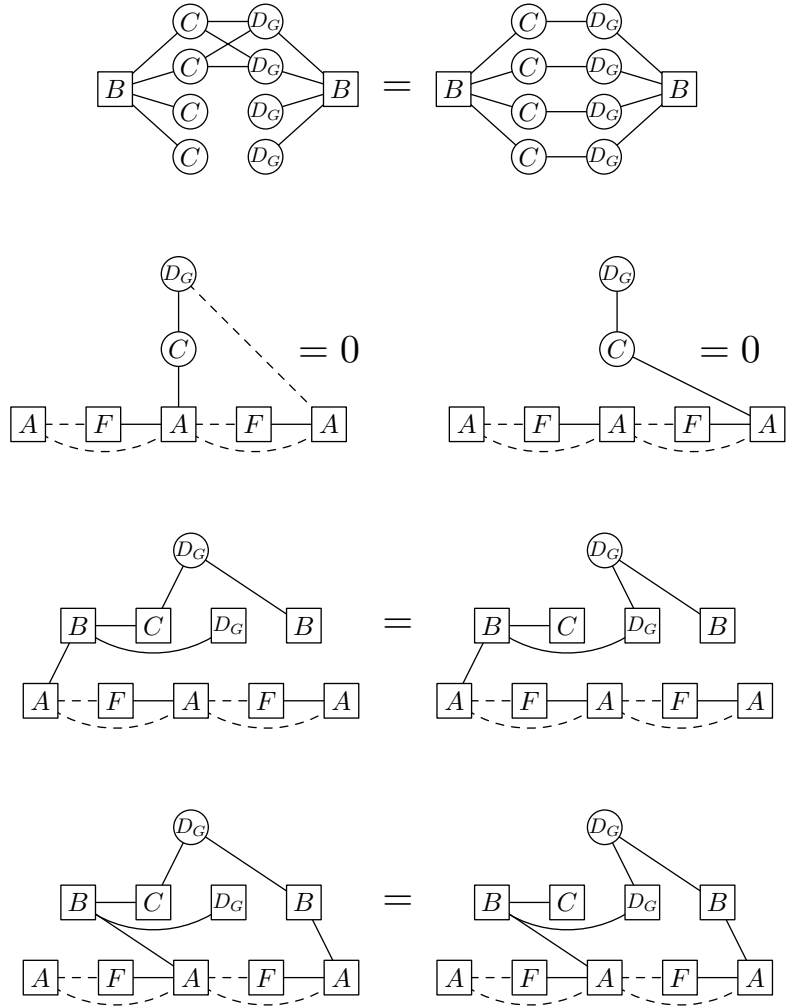

\begin{center}
\epsfbox{exturing-59.mps}
\vskip 10mm
\epsfbox{exturing-77.mps}
\hskip 10mm
\epsfbox{exturing-76.mps}
\vskip 10mm
\epsfbox{exturing-57.mps}
\vskip 10mm
\epsfbox{exturing-58.mps}
\end{center}
\caption{Decorated constraints forcing the structure of the tile $C\times D_G$.}
\label{fig-CDG}
\end{figure}

We now consider the three decorated constraints depicted in Figure~\ref{fig-CDG}.
For $(i,j) \in \{1,2,3\} \times \NN$, let 
\[I_{i,j}=\left[\frac{i-2^{1-j}}{3},\frac{i-2^{-j}}{3}\right).\] 
We start with the first constraint.
Let $b_1$ and $b_2$ be the two $B$-roots and assume that
they belong to $\eta_B^{-1}(I_{i_1,j_1})$ and $\eta_B^{-1}(I_{i_2,j_2})$, respectively, $(i_1,j_1),(i_2,j_2)\in\{1,2,3\}\times\NN$. 
For almost every choice of the roots $b_1$ and $b_2$,
almost all the choices of the $C$-vertices belong to $\eta_C^{-1}(I_{i_1,j_1})$ and
almost all the choices of the $D_G$-vertices to $\eta_{D_G}^{-1}(I_{i_2,j_2})$.
This implies that the density of $4$-cycles in the tile $C\times D_G$ restricted to $I_{i_1,j_1} \times I_{i_2,j_2}$
is equal to the fourth power of the density of the tile restricted to $I_{i_1,j_1} \times I_{i_2,j_2}$.
This implies (see for example the proof of Claim~11.63 in~\cite{bib-lovasz-book}) that
$W$ is equal to a constant $\xi_{i_1,j_1,i_2,j_2}$ almost everywhere
on each rectangle $\eta_C^{-1}(I_{i_1,j_1}) \times \eta_{D_G}^{-1}(I_{i_2,j_2})$ for each $(i_1,j_1),(i_2,j_2)\in\{1,2,3\}\times\NN$.
The two constraints on the second line imply that
$\xi_{i_1,j_1,i_2,j_2}=0$ for $(i_1,i_2)\in\{(2,1),(2,2),(3,1),(3,2),(3,3)\}$.
The constraint on the third line in Figure~\ref{fig-CDG} implies that
the values $\xi_{i_1,j_1,i_2,j_2}$
are equal to the corresponding values inside the tile $D_G\times D_G$
for $(i_1,i_2)\in\{(1,1),(1,2),(1,3)\}$, and
the constraint on the fourth line implies the same for $(i_1,i_2)=(2,3)$.
It now follows that $W(x,y)=W_P(g(x),g(y))$ for almost all $(x,y)\in C\times D_G$.

\subsection{Proof of Theorem~\ref{thm-ff} -- variable tiles}
\label{sub-var}

\begin{figure}
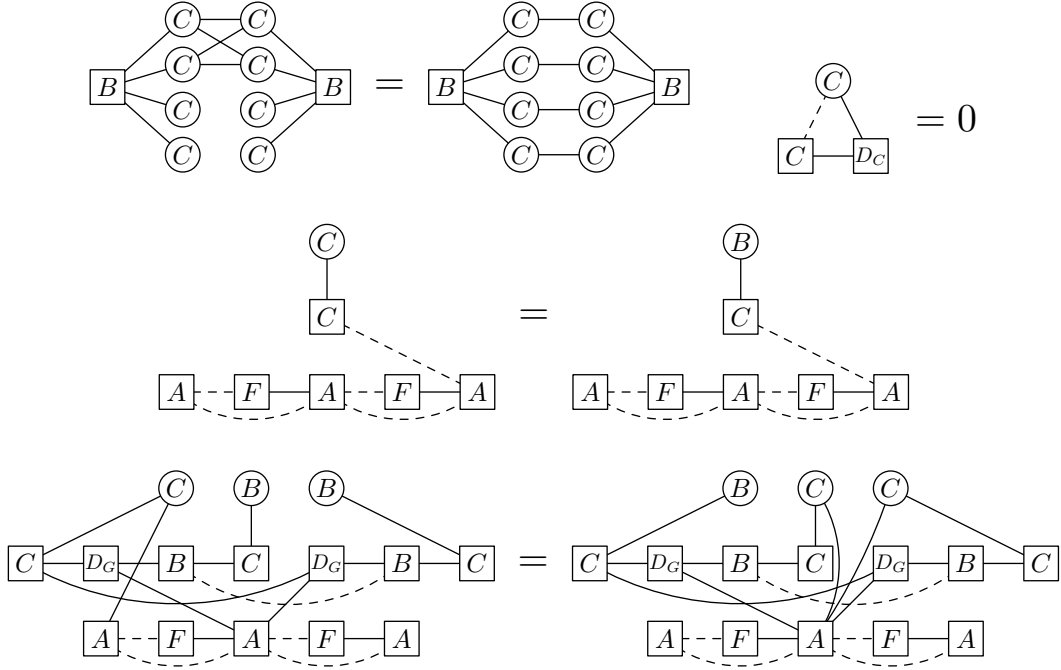

\begin{center}
\epsfbox{exturing-60.mps}
\hskip 10mm
\epsfbox{exturing-61.mps}
\vskip 6mm
\epsfbox{exturing-62.mps}
\vskip 6mm
\epsfbox{exturing-63.mps}
\end{center}
\caption{Decorated constraints forcing the specific structure of the tile $C\times C$.}
\label{fig-CC}
\end{figure}

In the previous subsection, we showed that
$W(x,y)=0$ for almost every $(x,y)\in\eta_C^{-1}([0,1/3))\times\eta_C^{-1}([1/3,2/3))$
such that $\coord{3\eta_C(x)}\not=\coord{3\eta_C(y)}$, and
we also showed that $W(x,y)=0$ for almost every $x\in\eta_C^{-1}([2/3,1))$ and $y\in C$.
The first constraint in Figure~\ref{fig-CC} is analogous to the first constraint in Figure~\ref{fig-CDG} and
it implies that there exist $\alpha^{11}_i$, $\alpha^{12}_i$ and $\alpha^{22}_i$, $i\in\NN$, such that
the following holds for almost every $(x,y)\in C\times C$:
$$W(x,y)=\left\{\begin{array}{cl}
         \alpha^{11}_i & \mbox{if $\lfloor 3\eta_C(x)\rfloor=0$, $\lfloor 3\eta_C(y)\rfloor=0$ and $i=\coord{3\eta_C(x)}=\coord{3\eta_C(y)}$,} \\
         \alpha^{12}_i & \mbox{if $\lfloor 3\eta_C(x)\rfloor=0$, $\lfloor 3\eta_C(y)\rfloor=1$ and $i=\coord{3\eta_C(x)}=\coord{3\eta_C(y)}$,} \\
         \alpha^{12}_i & \mbox{if $\lfloor 3\eta_C(x)\rfloor=1$, $\lfloor 3\eta_C(y)\rfloor=0$ and $i=\coord{3\eta_C(x)}=\coord{3\eta_C(y)}$,} \\
         \alpha^{22}_i & \mbox{if $\lfloor 3\eta_C(x)\rfloor=1$, $\lfloor 3\eta_C(y)\rfloor=1$ and $i=\coord{3\eta_C(x)}=\coord{3\eta_C(y)}$,} \\
	 0 & \mbox{otherwise.}
         \end{array}\right.$$
The second constraint on the first line in Figure~\ref{fig-CC} yields that $\alpha^{11}_1=1$.
The constraint on the second line in the figure implies that for every $i\in\NN$
$$\frac{\alpha^{11}_i+\alpha^{12}_i}{3\cdot 2^i}=\frac{1}{3\cdot 2^i}\qquad\mbox{and}\qquad\frac{\alpha^{12}_i+\alpha^{22}_i}{3\cdot 2^i}=\frac{1}{3\cdot 2^i}.$$
It follows that $\alpha^{11}_i=\alpha^{22}_i=1-\alpha^{12}_i$.
For $i\in\NN$, set $z_i=\alpha^{11}_{j}$ for $j$ with $M_j=\{i\}$.

We now analyze the last constraint depicted in Figure~\ref{fig-CC}.
For almost every choice of $C$-roots $c_1$, $c_2$ and $c_3$, the adjacencies to the middle $A$-root imply that
$\eta_C(c_1)\in [0,1/3)$, $\eta_C(c_2)\in [1/3,2/3)$ and $\eta_C(c_3)\in [1/3,2/3)$.
The structure of the tile $C\times D_G$ implies that
$\coord{3\eta_C(c_1)}\ge 2$ and that
$M_{\coord{3\eta_C(c_2)}}$ is either $\{\min M_{\coord{3\eta_C(c_1)}}\}$ or
$M_{\coord{3\eta_C(c_1)}}\setminus \{\min M_{\coord{3\eta_C(c_1)}}\}$.
Similarly,
$M_{\coord{3\eta_C(c_3)}}$ is either $\{\min M_{\coord{3\eta_C(c_1)}}\}$ or
$M_{\coord{3\eta_C(c_1)}}\setminus \{\min M_{\coord{3\eta_C(c_1)}}\}$.
Since the two $B$-roots are not adjacent, it follows that $M_{\coord{3\eta_C(c_2)}}\not=M_{\coord{3\eta_C(c_3)}}$.
By the symmetry of the graphs in the constraint, we can assume that
$M_{\coord{3\eta_C(c_2)}}=\{\min M_{\coord{3\eta_C(c_1)}}\}$ and
$M_{\coord{3\eta_C(c_3)}}=M_{\coord{3\eta_C(c_1)}}\setminus \{\min M_{\coord{3\eta_C(c_1)}}\}$ further in the analysis.

For such a choice of the $C$-roots $c_1$, $c_2$ and $c_3$,
the constraint implies that
$$\frac{\alpha^{11}_{\coord{3\eta_C(c_1)}}}{27\cdot 2^{\coord{3\eta_C(c_1)}+\coord{3\eta_C(c_2)}+\coord{3\eta_C(c_3)}}} = 
  \frac{\alpha^{22}_{\coord{3\eta_C(c_2)}}\alpha^{22}_{\coord{3\eta_C(c_3)}}}{27\cdot 2^{\coord{3\eta_C(c_1)}+\coord{3\eta_C(c_2)}+\coord{3\eta_C(c_3)}}}\,\mbox{.}
$$
Since $\alpha^{11}_i=\alpha^{22}_i$ for each $i \in \NN$, this yields that
$\alpha^{11}_{\coord{3\eta_C(c_1)}} = \alpha^{11}_{\coord{3\eta_C(c_2)}} \alpha^{11}_{\coord{3\eta_C(c_3)}}$.

Let $k=\coord{3\eta_C(c_1)}$.
If $|M_k|=1$, then $\coord{3\eta_C(c_2)}=k$ and $\coord{3\eta_C(c_3)}=1$.
In this case, the constraint always holds.
If $|M_k|>1$, then the constraint implies that
$\alpha^{11}_k=\alpha^{11}_{k'}\cdot\alpha^{11}_{k''}$
where $k'$ and $k''$ are such that $M_{k'}=\{\min M_k\}$ and $M_{k''}=M_k\setminus\{\min M_k\}$.
By induction on the size of $M_k$, we get that $\alpha^{11}_k=z^{M_k}$ for every $k\in\NN$.
For our choice of $z_i$, $i\in\NN$,
it follows that $W(x,y)=W_P(g(x),g(y))$ for almost all $(x,y)\in C\times C$.
Note that the values of $z_i$, $i\in\NN$, were uniquely determined by the structure of the graphon $W$.

\begin{figure}
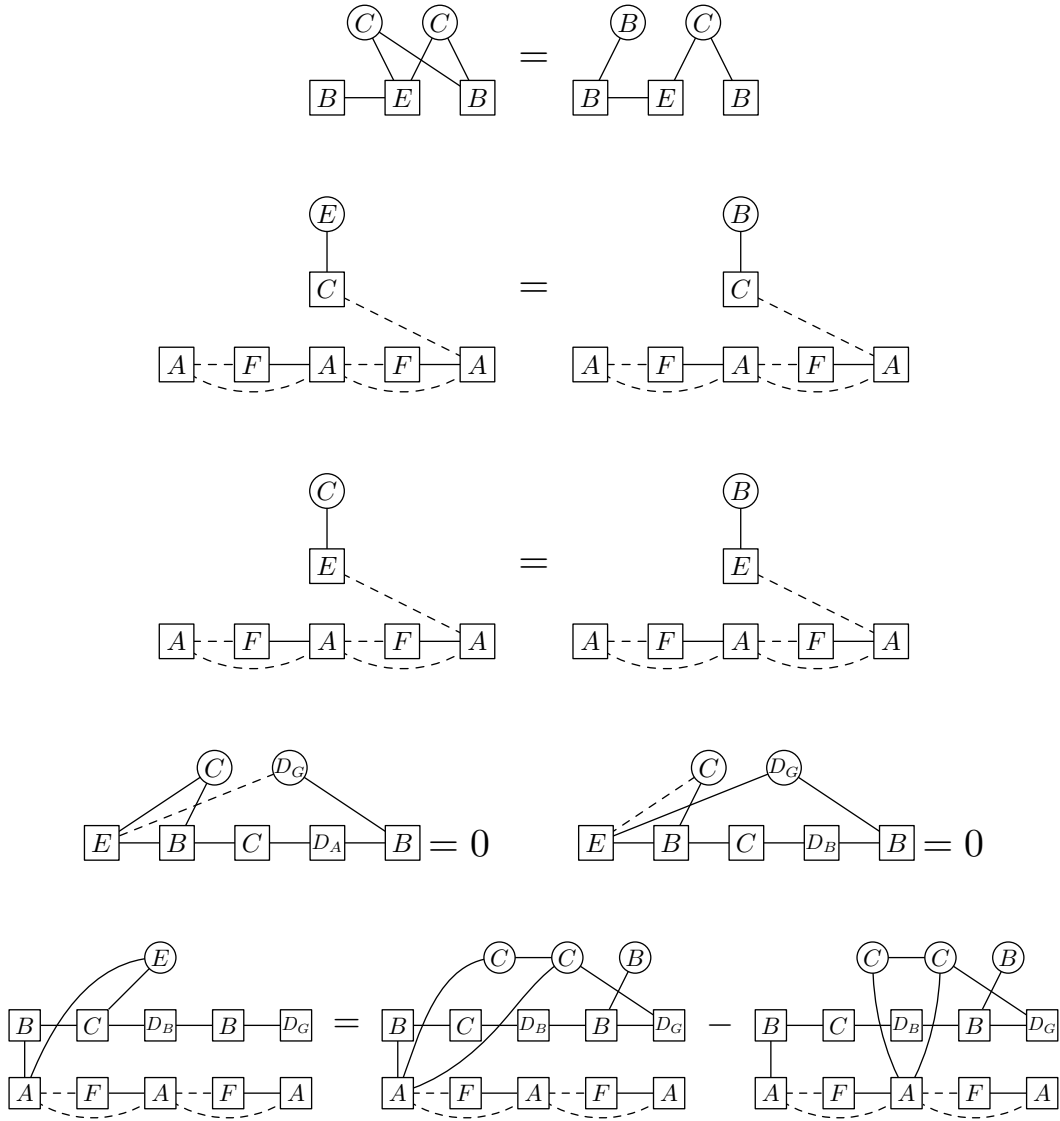

\begin{center}
\epsfbox{exturing-64.mps}
\vskip 10mm
\epsfbox{exturing-65.mps}
\vskip 10mm
\epsfbox{exturing-66.mps}
\vskip 10mm
\epsfbox{exturing-67.mps}
\hskip 10mm
\epsfbox{exturing-68.mps}
\vskip 10mm
\epsfxsize 140mm
\epsfbox{exturing-69.mps}
\end{center}
\caption{Decorated constraints forcing the specific structure of the tile $C\times E$.}
\label{fig-CE}
\end{figure}

We now focus on the tile $C\times E$ and consider the constraints depicted in Figure~\ref{fig-CE}.
In Subsection~\ref{sub-poly}, we showed that
$W(x,y)=0$ for almost every $(x,y)\in C\times E$ except for those $(x,y)$ such that
$\eta_C(x)\in [0,2/3)$ and $y\in J_{1,\coord{3\eta_C(x)}}\cup J_{2,\coord{3\eta_C(x)}}$.
Let us consider the first constraint depicted in the figure.
For almost every choice of $E$-root $y$ and right $B$-root $x$,
both sides of the constraint are zero unless $\eta_B(x)\in [0,2/3)$ and $y\in J_{1,\coord{3\eta_B(x)}}\cup J_{2,\coord{3\eta_B(x)}}$.
If $\eta_B(x)\in [0,1/3)$ and $y\in J_{1,\coord{3\eta_B(x)}}\cup J_{2,\coord{3\eta_B(x)}}$,
then the constraint implies that
$$\left(\deg^{\eta_C^{-1}([0,1/3))}(y)\right)^2=2^{-\coord{3\eta_B(x)}}\deg^{\eta_C^{-1}([0,1/3))}(y)/3\,\mbox{;}$$
if $\eta_B(x)\in [1/3,2/3)$ and $y\in J_{1,\coord{3\eta_B(x)}}\cup J_{2,\coord{3\eta_B(x)}}$,
then the constraint implies that
$$\left(\deg^{\eta_C^{-1}([1/3,2/3))}(y)\right)^2=2^{-\coord{3\eta_B(x)}}\deg^{\eta_C^{-1}([1/3,2/3))}(y)/3\,\mbox{.}$$
Hence, there exist $J_{11,i}\subseteq J_{1,i}$, $J_{12,i}\subseteq J_{1,i}$,
$J_{21,i}\subseteq J_{2,i}$ and $J_{22,i}\subseteq J_{2,i}$ such that
$W(x,y)=1$ for almost every $(x,y)\in C\times E$ such that
either $\eta_C(x)\in [0,1/3)$ and $y\in J_{11,\coord{3\eta_C(x)}}\cup J_{21,\coord{3\eta_C(x)}}$ or
$\eta_C(x)\in [1/3,2/3)$ and $y\in J_{12,\coord{3\eta_C(x)}}\cup J_{22,\coord{3\eta_C(x)}}$, and
$W(x,y)=0$ for almost all other $(x,y)\in C\times E$.

The second constraint depicted in Figure~\ref{fig-CE} yields that
$\deg^E(x)=\deg^B(x)=2^{-\coord{\eta_C(x)}}/3$
for almost every $x\in C$ with $\eta_C(x)\in [0,2/3)$.
Hence, $|J_{11,i}|+|J_{21,i}|=2^{-i}/3$ and $|J_{12,i}|+|J_{22,i}|=2^{-i}/3$ for every $i\in\NN$.
The third constraint in Figure~\ref{fig-CE} yields that
$\deg^C(y)=\deg^B(y)=2^{-\coord{\eta_E(y)}}/3$
for almost every $y\in E$ with $\eta_E(y)\in [0,2/3)$.
This implies that we can assume that
the sets $J_{11,i}$ and $J_{12,i}$ are disjoint,
the sets $J_{21,i}$ and $J_{22,i}$ are disjoint, and
$J_{11,i}\cup J_{12,i}=J_{1,i}$ and $J_{21,i}\cup J_{22,i}=J_{2,i}$ for every $i\in\NN$.

The left constraint on the fourth line implies that almost every $y\in J_{11,i}$ satisfies that $y\in J^u_{1,i}$, $i\in\NN$, and
the right constraint implies that almost every $y\in J^l_{1,i}$ satisfies that $y\in J_{11,i}$, $i\in\NN$
(the sets $J^l_{1,i}$ and $J^u_{1,i}$ were defined in Subsection~\ref{sub-poly}).
It follows that $J^l_{1,i}\sqsubseteq J_{11,i}\sqsubseteq J^u_{1,i}$ for all $i\in\NN$,
which in particular implies that $|J^l_{1,i}|\le |J_{11,i}|\le |J^u_{1,i}|$,
i.e., $3\cdot 2^i\cdot |J_{11,i}|\in [l_i,u_i]$.
Because of the last term in \eqref{eq-fE},
we get that $\eta_E(y)\le\eta_E(y')$ for almost all $y\in J_{11,i}$ and $y'\in J_{12,i}$.
Similarly, we get that $\eta_E(y)\le\eta_E(y')$ for almost all $y\in J_{21,i}$ and $y'\in J_{22,i}$.
To show that $W(x,y)=W_P(g(x),g(y))$ for almost all $(x,y)\in C\times E$,
it remains to establish that $|J_{11,i}|=2^{-i}p_i(z)/3$ for every $i\in\NN$.
To do so, we analyze the last constraint in Figure~\ref{fig-CE}.

Almost every choice of the $C$-root $x$ and the $D_G$-root $x'$ in the last constraint in Figure~\ref{fig-CE}
satisfies that $\eta_C(x)\in [0,1/3)$, $\eta_{D_G}(x')\in [2/3,1)$ and $\coord{3\eta_C(x)}=\coord{3\eta_{D_G}(x')}$.
Fix such a choice of roots and let $i=\coord{3\eta_C(x)}=\coord{3\eta_{D_G}(x')}$.
Recalling that $C \times \eta_{D_G}^{-1}([2/3,1))$ encodes the coefficients $\pi_{i,j}^+,\pi_{i,j}^-$ of the polynomials,
we can see that the constraint implies that $|J_{11,i}|$ is equal to
$$\sum_{k\in\NN}9\cdot 2^{2k}\pi^+_{i,k}\cdot\frac{1}{3\cdot 2^k}\cdot\frac{z^{M_k}}{3\cdot 2^k}\cdot\frac{1}{3\cdot 2^i}- 
  \sum_{k\in\NN}9\cdot 2^{2k}\pi^-_{i,k}\cdot\frac{1}{3\cdot 2^k}\cdot\frac{z^{M_k}}{3\cdot 2^k}\cdot\frac{1}{3\cdot 2^i}=\frac{p_i(z)}{3\cdot 2^i}\,\mbox{.}$$
It follows that $|J_{11,i}|=\frac{p_i(z)}{3\cdot 2^i}$ for every $i\in\NN$.
Hence, we can conclude that $W(x,y)=W_P(g(x),g(y))$ for almost all $(x,y)\in C\times E$.
Since we have already established that $3\cdot 2^i\cdot |J_{11,i}|\in [l_i,u_i]$, we also obtain that $p_i(z)\in [l_i,u_i]$.

\subsection{Proof of Theorem~\ref{thm-ff} -- cleaning up}
\label{sub-clean}

In Subsections~\ref{sub-setting}--\ref{sub-var},
we showed that $W(x,y)=W_P(g(x),g(y))$ for almost all $(x,y)\in (A\cup B\cup C\cup D_A\cup\ldots\cup D_G\cup E\cup F)^2$.
We now focus on the remaining tiles and
consider the decorated constraints depicted in Figures~\ref{fig-Q} and~\ref{fig-R}.

\begin{figure}
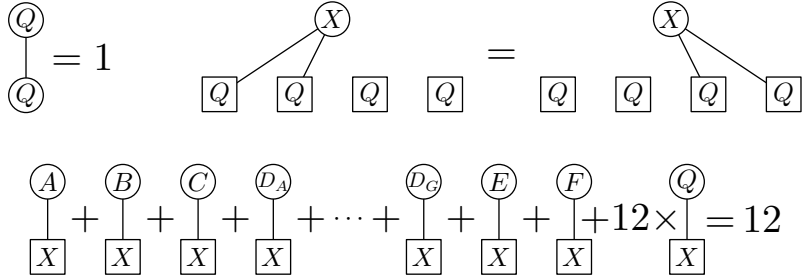

\begin{center}
\epsfbox{exturing-71.mps}
\hskip 10mm
\epsfbox{exturing-74.mps}
\vskip 6mm
\epsfbox{exturing-70.mps}
\end{center}
\caption{Decorated constraints forcing the structure of the tiles $X\times Q$ and $Q\times Q$,
         where $X\in\{A,B,C,D_A,\ldots,D_G,E,F\}$.}
\label{fig-Q}
\end{figure}

The first constraint in Figure~\ref{fig-Q} implies that $W(x,y)=1$ for almost every $(x,y)\in Q\times Q$.
Next fix $X\in\{A,B,C,D_A,\ldots,D_G,E,F\}$ and consider the second constraint on the first line in Figure~\ref{fig-Q}.
The constraint yields that the following holds for almost all $x,x',x'',x'''\in Q$:
$$\int_X W(x,y)W(x',y)\dd y=\int_X W(x'',y)W(x''',y)\dd y\,\mbox{,}$$
which implies by Lemma~\ref{lm-typ-pairs} that there exists $K\in\RR$ such that
$$\int_X W(x,y)W(x',y)\dd y=K\qquad\mbox{and}\qquad\int_X W(x,y)^2\dd y=K$$
for almost all $x,x',\in Q$.
However, this is only possible if there exists a measurable function $\xi:X\to [0,1]$ such that
$W(x,y)=\xi(y)$ for almost all $(x,y)\in Q\times X$.
In particular, $\deg^Q(y) = \xi(y)$ for almost all $y\in X$. 
The constraint on the second line implies that
$$\xi(y)=\frac{12-\deg^A(y)-\deg^B(y)-\ldots-\deg^E(y)-\deg^F(y)}{12}$$
for almost all $y\in X$.
Since $W(x,y)=W_P(g(x),g(y))$ for almost all $(x,y)\in (A\cup B\cup C\cup D_A\cup\ldots\cup D_G\cup E\cup F)^2$,
we get that $\xi(y)=W_P(x,g(y))$ for every $x\in Q$ and almost every $y\in X$.
We conclude that $W(x,y)=W_P(g(x),g(y))$ for almost all $(x,y)\in X\times Q$, where we recall that $X \in \{A,B,C,D_A,\ldots,D_G,E,F\}$.

\begin{figure}
\begin{center}
\epsfbox{exturing-72.mps}
\hskip 10mm
\epsfbox{exturing-73.mps}
\end{center}
\caption{Decorated constraints forcing the structure of the tiles $X\times R$,
         where $(X,k)$ is one of $(A,1)$, $(B,2)$, $(C,3)$, $(D_A,4)$, \dots, $(D_G,10)$, $(E,11)$, $(F,12)$, $(Q,25)$ and $(R,0)$.}
\label{fig-R}
\end{figure}

Fix $(X,k)$ to be one of the following pairs:
$(A,1)$, $(B,2)$, $(C,3)$, $(D_A,4)$, \dots, $(D_G,10)$, $(E,11)$, $(F,12)$, $(Q,25)$ and $(R,0)$.
Consider the decorated constraints depicted in Figure~\ref{fig-R}.
The two constraints imply that
$$\frac{1}{|R|}\int_R W(x,y)\dd y=\frac{k}{25}\qquad\mbox{and}\qquad\frac{1}{|R|}\int_R W(x,y)W(x',y)\dd y=\frac{k^2}{625}$$
for almost all $x,x'\in X$.
The second equality yields by Lemma~\ref{lm-typ-pairs} that
$$\frac{1}{|R|}\int_R W(x,y)^2\dd y=\frac{k^2}{625}$$
for almost all $x\in X$.
Hence, the Cauchy-Schwarz Inequality implies that it holds for almost all $x\in X$
that $W(x,y)$ as a function of $y\in R$ is constant almost everywhere on $R$.
It follows that $W(x,y)=k/25$ for almost all $(x,y)\in X\times R$.
We can now conclude that $W(x,y)=W_P(g(x),g(y))$ for almost all $(x,y)\in [0,1)\times [0,1)$,
which finishes the proof of Theorem~\ref{thm-ff}.

\section{Dependence on parameters}
\label{sec-analyze}

In this section,
we analyze the dependence of the structure of the graphon $W_P(z)$ and
the densities of graphs in this graphon on a bounding sequence $P$ and a vector $z\in [0,1]^\NN$.
We start with an additional definition:
the $L_1$-distance between two bounding sequences $P$ and $P'$ is equal to
\[\sup_{z\in [0,1]^\NN}\int_{[0,1)^2}\left|W_P(z)(x,y)-W_{P'}(z)(x,y)\right|\dd x\dd y.\]
We remark that we view the graphons in the definition above purely as functions from $[0,1)^2$ to $\RR$,
i.e., we do not consider rearrangements of their underlying spaces as
in the standard definition of the $L_1$-distance between graphons.

\begin{lemma}
\label{lm-add-P}
Let $P$ be a bounding sequence.
For every $\eps>0$, there exists an integer $k_P$ such that
the $L_1$-distance between $P$ and any $k_P$-strengthening of $P$ is at most $\eps$.
\end{lemma}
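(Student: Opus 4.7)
Fix $\eps>0$. The plan is to identify the tiles of $W_P(z)$ whose definition depends on the bounding sequence $P$ and bound the $L_1$-change on each tile uniformly in $z$ as $k_P\to\infty$. Inspecting Section~\ref{sec-family}, the sequence $P$ enters the construction only at the following places: inside the auxiliary graphon $W_F$, which is embedded via the universal construction of Theorem~\ref{thm-universal} into the tiles $D_Z\times D_{Z'}$ for $Z,Z'\in\{A,\ldots,G\}$; in the tile $C\times E$ through the values $p_i(z)$; in the tile $D_G\times E$ through $l_i$ and $u_i$; and indirectly in the tiles $X\times Q$ for $X\in\{A,\ldots,F\}$, through the averaging formula that defines $W_P$ there (the tiles $Q\times Q$, $Q\times R$ and $X\times R$ are independent of $P$). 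In a $k_P$-strengthening $P'$, only triples with index $i>k_P$ can differ, and each such differing triple equals $(0,0,1)$ in $P$.

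I would first bound the direct contributions. In $C\times E$, a change in $p_i$ affects only the region $\{\coord{3\eta_C(x)}=i\}$, whose $L_1$-measure inside the tile is $O(2^{-i}/25^2)$; summing over $i>k_P$ gives an $L_1$-contribution of $O(2^{-k_P})$. The same estimate, in fact the better bound $O(2^{-2k_P})$, applies to $D_G\times E$ since there both endpoints must have $\coord{\cdot}=i$. For $W_F$, the inequality $|\pi_{i,j}|\le 2^{-2^j}/9$ combined with the multiplicative factor $9\cdot 2^{2j}$ in the definition and the area $2^{-i-j}/9$ of the piece affected by $\pi_{i,j}$ gives
\[
\sum_{i>k_P}\sum_{j\in\NN} 2^{2j-2^j}\cdot\frac{2^{-i-j}}{9}=O(2^{-k_P}),
\]
while the changes in $l_i,u_i$ on the two diagonal squares contribute $O(2^{-2k_P})$.

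The main obstacle is converting the $L_1$-closeness of $W_F$ into $L_1$-closeness of the universal graphon $W_0$ actually occupying the tiles $D_Z\times D_{Z'}$ (since $W_F$ fills only a $1/13$-fraction of $W_0$). For this I would apply Proposition~\ref{prop-change}: given the target $\eps$ rescaled by the factor $49/625$ coming from the number and area of the $D_Z\times D_{Z'}$ tiles, let $k$ be the resulting integer. The dyadic square densities of $W_F$ of side length at least $2^{-k}$ are finitely many specific real numbers determined by $P$, and a $k_P$-strengthening perturbs each of them by $O(2^{-k_P})$. Choosing $k_P$ so large that this perturbation is smaller than the minimum distance from any of these densities to a bit-flip at position $k$ in its standard binary representation, the densities of $W_F$ and $W'_F$ agree to the first $k$ bits; any degenerate dyadic values can be handled by further enlarging $k_P$ so that the perturbed density stays within the same bit-interval. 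Proposition~\ref{prop-change} then bounds $\|W_0-W'_0\|_1$ by the prescribed amount, and hence bounds the total $L_1$-change on the tiles $D_Z\times D_{Z'}$. Finally, each tile $X\times Q$ is a linear average of the tiles $X\times Z$ with $Z\ne Q$, so its $L_1$-change is dominated by those of the preceding tiles. Summing all contributions and choosing $k_P$ sufficiently large completes the proof.
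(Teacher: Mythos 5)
Your decomposition is the same as the paper's (tile-by-tile estimates, Proposition~\ref{prop-change} for the $D$-tiles, and domination of the $X\times Q$ columns by the other columns via the row-sum normalization), but the step that converts $L_1$-closeness of $W_F$ and $W'_F$ into agreement of the first $k$ binary digits of the dyadic-square densities has a genuine hole as written. If one of the finitely many relevant densities is exactly a multiple of $2^{-k}$ (and such densities do occur, e.g.\ for squares on which $W_F$ is identically $0$ or $1$), then its bit-interval under the standard (terminating) binary representation is $[j/2^{k},(j+1)/2^{k})$, and a downward perturbation of any size, however small, leaves this interval and flips the $k$-th digit; so your patch ``further enlarging $k_P$ so that the perturbed density stays within the same bit-interval'' cannot work by itself, because no choice of $k_P$ controls the sign of the perturbation. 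What rescues the argument, and what the paper uses, is monotonicity: a strengthening replaces triples $(0,0,1)$ by $(p_i,l_i,u_i)$, which changes $\pi^{+}_{i,j},\pi^{-}_{i,j}$ from $0$ to nonnegative values and the diagonal values $1-u_i$ and $l_i$ from $0$ to nonnegative values, so $W'_F\ge W_F$ pointwise and every dyadic-square density can only move upward. The upward gap from each density to the next multiple of $2^{-k}$ is strictly positive (this is exactly the paper's quantity $\eps_F$, defined with $\lfloor 2^{k}d\rfloor+1$), there are finitely many relevant squares, and choosing $k_P$ so that the nonnegative perturbation is below the minimum gap gives the required bit agreement. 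You need to state and use this one-sidedness; without it the degenerate dyadic case is a real gap, not a technicality that larger $k_P$ absorbs.

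Two smaller points. First, the tile $C\times D_G$ also carries values of $W_F$ directly and hence depends on $P$; it is missing from your enumeration, though it is handled exactly like your $C\times E$ and $D_G\times E$ estimates, since the region where a $k_P$-strengthening can change it has relative measure $O(2^{-k_P})$. Second, your quantitative bounds on the directly affected tiles and the averaging argument for $X\times Q$ are fine and match the paper's proof; once the monotonicity observation is added, the whole argument goes through.
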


\begin{proof}
Fix $z\in [0,1]^\NN$ and a strengthening $P'$ of the bounding sequence $P$. We will again just write $W_P$ and $W_{P'}$ instead of $W_P(z)$ and $W_{P'}(z)$, respectively.
Our goal is to show that if $P'$ is a $k_P$-strengthening of $P$, for a sufficiently large integer $k_P$,
which does not depend on $P'$ and $z$,
then the $L_1$-distance between $W_P$ and $W_{P'}$, viewed as functions from $[0,1)^2$ to $\RR$, is at most $\eps$.
Note that the graphons $W_P$ and $W_{P'}$ can differ only in the following tiles:
$(D_A\cup\ldots\cup D_G)^2$, $C\times E$, $C\times D_G$, $D_G\times E$ and $X\times Q$, where $X\in\{C,D_A,\ldots,D_G,E\}$.

Let $k_0$ be the integer from Proposition~\ref{prop-change} applied for $\eps/6$.
Furthermore, let $W_F$ be the graphon for the bounding sequence $P$ from the definition of $W_P$, and
let $W'_F$ be the graphon for the bounding sequence $P'$.
For a dyadic square $S$, let $d_{W_F}(S)$ be its density in $W_F$,
i.e.,
\[d_{W_F}(S)=\frac{\int_S W_F(x,y)\dd x\dd y}{|S|}.\]
Finally, let $\eps_F$ be equal to the minimum of the values
\[\frac{\lfloor 2^{k_0}d_{W_F}(S)\rfloor+1}{2^{k_0}}-d_{W_F}(S),\]
taken over all dyadic squares $S$ of sizes $2^0,2^{-1},\ldots,2^{-k_0}$.
Note that $\eps_F$ is positive.
Set $k_1$ to be an integer such that $2^{-k_1}<2^{-k_0}\cdot\eps_F$.
Note that the choice of $k_1$ depends only on the bounding sequence $P$ and the value of $\eps$.
If the bounding sequences $P$ and $P'$ agree on the first $k_1$ elements,
then the definition of the graphons $W_F$ and $W_{F'}$ imply that
their $L_1$-distance is at most $2^{-k_1}$.
In addition, if $P'$ is a strengthening of $P$, then $W'_F(x,y)\ge W_F(x,y)$ for all $(x,y)\in [0,1)^2$.
Hence, if $P'$ is a $k_1$-strengthening of $P$,
then the density of each dyadic square of size at least $2^{-k_0}$ in $W'_F$
is at least the density of that square in $W_F$ and
the difference between the two densities is at most $\eps_F$.
In particular, the densities of the dyadic squares of $2^0,2^{-1},\ldots,2^{-k_0}$
in $W_F$ and $W'_F$ agree up to the first $k_0$ bits after the decimal point in the standard binary representation.
Consequently, the $L_1$-distance between the graphons $W_0$ and $W'_0$ from Theorem~\ref{thm-universal}
applied with $W_F$ and $W'_F$, respectively, is at most $\eps/6$.
We conclude that if $P'$ is a $k_1$-strengthening of $P$, then
$$\int_{(D_A\cup\ldots\cup D_G)^2}\left|W_P(x,y)-W_{P'}(x,y)\right|\dd x\dd y\le\frac{\eps}{6}\,\mbox{.}$$

Next, set $k_2$ to be an integer such that $2^{-k_2}\le\eps/6$.
Observe that if the bounding sequences $P$ and $P'$ agree on the first $k_2$ elements,
then the measure of the points in each of the tiles $C\times E$, $C\times D_G$ and $D_G\times E$
where $W_P$ and $W_{P'}$ differ is at most $2^{-k_2}$ of the total measure of the tile.
We set $k_P=\max\{k_1,k_2\}$ and conclude that if $P'$ is a $k_P$-strengthening of $P$,
then
\begin{equation}
\int_{(A\cup B\cup C\cup D_A\cup\ldots\cup D_G\cup E\cup F)^2}\left|W_P(x,y)-W_{P'}(x,y)\right|\dd x\dd y\le\frac{\eps}{3}\,\mbox{.}
\label{eq-addP1}
\end{equation}

Fix $x\in A\cup B\cup C\cup D_A\cup\ldots\cup D_G\cup E\cup F$.
The definition of the graphons $W_P$ and $W_{P'}$ yields that
$$\int_{A\cup B\cup C\cup D_A\cup\ldots\cup D_G\cup E\cup F\cup Q}W_P(x,y)-W_{P'}(x,y)\dd y=0.$$
Since the value of $W_P(x,y)$ is the same for all $y\in Q$ and the value of $W_{P'}(x,y)$ is also the same for all $y\in Q$,
it follows that
\begin{align}
\int\limits_Q \left|W_P(x,y)-W_{P'}(x,y)\right|&\dd y = \left|\int_Q W_P(x,y)-W_{P'}(x,y)\dd y\right| \nonumber\\ 
=&\left|\int\limits_{A\cup B\cup C\cup D_A\cup\ldots\cup D_G\cup E\cup F}\mkern-9mu W_P(x,y)-W_{P'}(x,y)\dd y \right| \nonumber\\
\le & \int\limits_{A\cup B\cup C\cup D_A\cup\ldots\cup D_G\cup E\cup F}\mkern-9mu \left|W_P(x,y)-W_{P'}(x,y)\right|\dd y.
\label{eq-addP2}
\end{align}
%
We derive from \eqref{eq-addP1} and \eqref{eq-addP2} that
\begin{equation}
\int_{(A\cup B\cup C\cup D_A\cup\ldots\cup D_G\cup E\cup F)\times Q}\left|W_P(x,y)-W_{P'}(x,y)\right|\dd x\dd y\le\frac{\eps}{3}\,\mbox{.}
\label{eq-addP3}
\end{equation}
The symmetric estimate holds for the integral over $Q\times (A\cup B\cup C\cup D_A\cup\ldots\cup D_G\cup E\cup F)$.
Since the graphons $W_P$ and $W_{P'}$ agree on the tile $Q\times Q$ and all tiles involving the part $R$,
we conclude using the estimates \eqref{eq-addP1} and \eqref{eq-addP3} that
their $L_1$-distance is at most $\eps$.
\end{proof}

We next analyze the dependence of the density of a graph $H$ in a graphon $W_P(z)$ on the vector $z$. 
We start by showing that this density is a countable sum of polynomials in $z$,
where the polynomials appearing in the sum have their coefficients sufficiently restricted.
In the statement of the next lemma,
we use the linear order on multisets of natural numbers defined in Section~\ref{sec-gensetup},
i.e., $M_i$ is the $i$-th multiset in this order.  

\begin{lemma}
\label{lm-densityinfinitesum}
For every graph $H$, there exists a countable set $\HH$,
a constant $c_H\in (0,1)$ and constants $\beta_{H',i}$, $H'\in\HH$ and $i\in\NN$, with the following properties.
For every bounding sequence $P=(p_j,l_j,u_j)_{j\in\NN}$, there exist polynomials
\begin{equation} \label{eq-poly-alpha}
q_{P,H'}(z)=\sum_{i\in\NN} \alpha_{P,H',i}\cdot z^{M_i}
\end{equation}
such that
\begin{equation}
\tau(H,W_P(z))=\sum_{H' \in \HH} q_{P,H'}(z)
\label{eq-densityinfinitesum}
\end{equation}
for every $z\in [0,1]^\NN$ that satisfies $p_j(z)\in [l_j,u_j]$ for all $j\in\NN$.
In addition, it holds that
$|\alpha_{P,H',i}| \le \beta_{H',i}$ for every $H'\in\HH$ and $i\in\NN$,
the sum $\sum_{H' \in \HH} \beta_{H',i}$ is finite for all $i\in\NN$, and
is at most $c_H^{\Sigma(M_i)}$ for all but finitely many $i\in\NN$.

Furthermore, for every $H'\in\HH$ and every real $\eps>0$,
there exist an integer $k_{P,H'}$ and a real $\eps_{P,H'}>0$ such that
if $P'$ is a bounding sequence that
agrees with $P$ on the first $k_{P,H'}$ elements and that
has $L_1$-distance at most $\eps_{P,H'}$ from $P$,
then the polynomials $q_{P,H}(z)$ and $q_{P',H}(z)$ are $\eps$-close.
\end{lemma}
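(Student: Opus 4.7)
The plan is to expand $\tau(H, W_P(z))$ as an iterated integral and to decompose it according to the countable subtile structure of $W_P(z)$. Each vertex of $H$ is first assigned to one of the 14 parts $A, B, C, D_A, \dots, D_G, E, F, Q, R$; for vertices in $C$, the assignment is then refined by the discrete indices $\lfloor 3\eta_C(\cdot)\rfloor \in \{0,1\}$ and $\coord{3\eta_C(\cdot)} \in \NN$, and analogously for vertices in $E$ (via the subsets $J_{s,i}$ and their $J_{11,i},J_{12,i},J_{21,i},J_{22,i}$ refinements) and for vertices in the $D_X$ parts (via the strata of the embedded graphon $W_0$). This yields a countable family $\HH$ of ``decorated'' copies of $H$, and monotone/dominated convergence gives the decomposition \eqref{eq-densityinfinitesum} with $q_{P,H'}(z)$ the contribution of the decoration $H'$.

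Next, I would show that each $q_{P,H'}(z)$ is a polynomial of the stated form. Outside the tiles $C\times C$ and $C\times E$, all values of $W_P(z)$ are constants (depending on $P$ but not on $z$), so they contribute $z$-independent factors. A $C\times C$ factor takes the value $z^{M_i}$ or $1-z^{M_i}$ determined by the decoration, while integrating out the $E$-coordinate in the relevant subtile of $C\times E$ yields a factor $p_j(z)$ or $1-p_j(z)$ with $j$ determined by the decoration. Expanding $p_j(z) = \sum_i \pi_{j,i} z^{M_i}$ and multiplying out then gives $q_{P,H'}(z) = \sum_i \alpha_{P,H',i} z^{M_i}$ with $\alpha_{P,H',i}$ a scalar depending on $P$ but not on $z$.

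To establish the coefficient bounds, I observe that a monomial $z^{M_i}$ can appear in a given decorated contribution only via: (a) a $C\times C$-subtile whose index is forced by the decoration, which contributes a subtile-measure factor that decays like $4^{-i}/9$; or (b) an expansion of some coefficient $\pi_{j,i}$, which satisfies $|\pi_{j,i}| \le 2^{-2^i}/9$ by the definition of a bounding sequence. Replacing each $\pi$-coefficient by its absolute bound and each subtile measure by its explicit value yields a $P$-independent bound $\beta_{H',i}$ on $|\alpha_{P,H',i}|$. Summing over all decorations $H' \in \HH$ that can produce a given $z^{M_i}$, the geometric decay of the subtile measures combined with the superexponential decay of the $\pi_{j,i}$'s gives $\sum_{H' \in \HH} \beta_{H',i} \le c_H^{\Sigma(M_i)}$ for a suitable $c_H \in (0,1)$ and all sufficiently large $i$, via standard geometric-series estimates together with the bound $\Sigma(M_i)\le i$ from Observation~\ref{obs-monomialsadditive}.

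Finally, for the continuity property, fix $H' \in \HH$ and let $k(H')$ be an upper bound on all subtile indices and polynomial indices $j$ appearing in its decoration. Choosing $k_{P,H'} \ge k(H')$ guarantees that whenever $P'$ agrees with $P$ on the first $k_{P,H'}$ elements, the polynomials $p_j$ that appear in $q_{P,H'}$ and $q_{P',H'}$ coincide; the only remaining $P$-dependent constants in $q_{P,H'}$ are integrals against the embedded $W_0$-tile and related subtile measures, which can be made as close as desired by choosing $\eps_{P,H'}$ small (using Proposition~\ref{prop-change} to handle the $W_0$-part). Since $q_{P,H'}$ and $q_{P',H'}$ differ then only in finitely many coefficients of a polynomial in finitely many variables, $\eps$-closeness of the polynomials and their partial derivatives follows immediately. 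The main technical obstacle is the bookkeeping in the third step: simultaneously producing a $P$-uniform pointwise bound on $\alpha_{P,H',i}$ and a summable, geometrically decaying $\beta_{H',i}$ requires carefully distinguishing the two mechanisms by which $z^{M_i}$ can enter a decoration and arguing that, even after summing over all decorations, the two sources of smallness combine to give the sharp $c_H^{\Sigma(M_i)}$ decay.
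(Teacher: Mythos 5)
Your route is in substance the paper's own: label each vertex of $H$ by one of countably many pieces of the parts (the paper refines only $C$, $E$ and, three ways, $D_G$, lumping everything else into a single $z$-independent factor $s_{P,H'}$), show each labelled contribution is a polynomial, and then combine the two decay mechanisms you name. Two of your statements are inaccurate, though repairable. First, the values of $W_P(z)$ outside $C\times C$ and $C\times E$ are \emph{not} constants --- the $D_X\times D_Y$, $F$- and $Q$-tiles are far from constant, and the embedded $W_0$ admits no countable stratification into constant pieces --- they are merely independent of $z$ (Lemma~\ref{lm-dep-z}), which is all that is needed once their contribution is aggregated into a constant. Second, when an $E$-vertex of $H$ has prescribed adjacencies to $D_G$-vertices as well as to $C$-vertices, integrating out its position produces factors such as $p_j(z)-l_j$, $u_j-p_j(z)$ or $(p_j(z)-l_j)/(u_j-l_j)$, not only $p_j(z)$ and $1-p_j(z)$; it is exactly here that the hypothesis $p_j(z)\in[l_j,u_j]$ is invoked to avoid truncations and keep $q_{P,H'}$ a polynomial, and one also needs that $W_P$ vanishes on $E\times E$ so that distinct $E$-vertices decouple (your half-proposed refinement by the sets $J_{11,i}$ etc.\ must also include the $l_j,u_j$ splits for this to work). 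Your continuity paragraph is essentially the paper's; the appeal to Proposition~\ref{prop-change} is unnecessary, since the $L_1$-distance hypothesis already controls the whole graphon including the $D$-block.

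The genuine gap is the estimate $\sum_{H'\in\HH}\beta_{H',i}\le c_H^{\Sigma(M_i)}$, which you dispatch as ``standard geometric-series estimates'' while conceding it is the main obstacle. It is not a geometric series: a fixed monomial $z^{M_i}$ can arise by splitting $M_i$ arbitrarily between monomials carried by pairs of $C$-vertices and expansions of the coefficients $\pi_{j,k}$ at $E$-vertices, and one must simultaneously count the consistent splittings and the decorations realizing them. The paper does this by introducing, for each $H'$, the partitions of $M_i$ into multisets attached to $E$-vertices and to pairs of $C$-vertices, parametrizing by the total weight $S$ carried by the $E$-vertices, bounding the number of relevant configurations by roughly $3^n 2^{n^2} n^{S}$, and beating this count either by the factor $2^{-2^{S/n}}$ harvested from a single heaviest $E$-vertex or, when $S$ is small, by the measure of a single $C$-stratum whose index is at least $(\Sigma(M_i)-S)/n^2$; only one such measure factor can be harvested, which is why the final decay is of order $2^{-\Sigma(M_i)/(2n^2)}$ and why $c_H$ must depend on $H$. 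Without this counting argument (or an equivalent one), the summability over the infinite family $\HH$ and the $c_H^{\Sigma(M_i)}$ decay are asserted rather than proved; the remainder of your outline does match the paper's proof.
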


\begin{proof}
Fix a graph $H$ and a bounding sequence $P$ for the proof of the lemma; let $v_1,\ldots,v_n$ be the vertices of $H$.
We start by defining the set $\HH$.
Let $C_{3i+j}\subseteq C$ be defined as follows:
$$C_{3i+j} = \iota_C\left( \left[\frac{j-2^{-i}}{3},\frac{j-2^{-(i+1)}}{3}\right) \right),$$
where $(i,j)\in\NN_0\times\{1,2,3\}$.
Note that the graphon $W_P(z)$ is constant on every set $C_k\times C_{k'}$, $k,k'\in\NN$, and
it is zero on the tile $C\times C$ outside the sets $C_{3k+1}\times C_{3k+1}$,
$C_{3k+1}\times C_{3k+2}$, $C_{3k+2}\times C_{3k+1}$ and $C_{3k+2}\times C_{3k+2}$, $k\in\NN$.
We also define $E_{3i+j}\subseteq E$ as
$$E_{3i+j} = \iota_E\left( \left[\frac{j-2^{-i}}{3},\frac{j-2^{-(i+1)}}{3}\right) \right)$$
for $(i,j)\in\NN_0\times\{1,2,3\}$ and
$$E_{3i+1,j} = \left\{\begin{array}{ll}
	\smallskip \iota_E\left( \left[\frac{1-2^{-i}}{3},\frac{1-2^{-i}+l_i 2^{-(i+1)}}{3}\right) \right) & \mbox{if $j=1$,} \\
	\smallskip \iota_E\left( \left[\frac{1-2^{-i}+l_i 2^{-(i+1)}}{3},\frac{1-2^{-i}+u_i 2^{-(i+1)}}{3}\right) \right) & \mbox{if $j=2$,} \\
	\iota_E\left( \left[\frac{1-2^{-i}+u_i 2^{-(i+1)}}{3},\frac{1-2^{-(i+1)}}{3}\right) \right) & \mbox{if $j=3$,} \\
	\end{array}\right.$$
for $(i,j)\in\NN\times\{1,2,3\}$.	
Finally, let 
$$D_{G,j}=\iota_{D_G}\left([(j-1)/3,j/3)\right)$$
for $j\in\{1,2,3\}$.
We define the set $\HH$ to be the set containing all copies of $H$
where each vertex is labeled with one of the following sets:
$A$, $B$, $C_k$, $D_A,\ldots,D_F$, $D_{G,1}$, $D_{G,2}$, $D_{G,3}$, $E_{k}$, $F$, $Q$ and $R$, $k\in\NN$.
	
Fix $H'\in\HH$ and let $X_i$ be the label of the vertex $v_i$ of $H'$. Form another graph $G$ on the vertices of $H$ as follows. For each vertex $v_i$, 
sample $x_i$ from $X_i$ uniformly and independently, and
join the vertices $v_i$ and $v_{i'}$ with probability equal to the value of the graphon $W_P(z)$ for $(x_i,x_{i'})$.
We next show that the probability that the graph $G$ is the graph $H$ (preserving vertices)
is a polynomial $p_{P,H'}(z)$ of $z$.

Let $V_{C}$ and $V_E$ consist of the vertices of $H'$ that are labeled with $C_k$ or $E_k$ for some $k\in\NN$, respectively, and
let $V_{CE}=V_C\cup V_E$.
Given two vertices of $H'$ such that at least one of them is not contained in $V_{CE}$, the probability that they 
are joined by an edge does not depend on $z\in [0,1]^\NN$ by Lemma~\ref{lm-dep-z}.
Let $s_{P,H'}$ be the probability that all such pairs of vertices have the same adjacencies in $G$ and $H$.
Furthermore,
let $r_{P,H'}(z)$ be the probability that $G$ and $H$ agree on $V_{CE}$ conditioned on the event that
the graphs $G$ and $H$ agree outside $V_{CE}$.
Clearly $p_{P,H'}(z)=s_{P,H'}\cdot r_{P,H'}(z)$.
Since for all $y\in [0,1)\setminus (C\cup E)$, the value of $W_P(z)$ for $(x,y)$ and $(x',y)$ is the same
if $x$ and $x'$ belong to the same set $C_k$, $E_{3k+1,1}$, $E_{3k+1,2}$, $E_{3k+1,3}$, $E_{3k+2}$ or $E_{3k+3}$, $k\in\NN$,
the event that $G$ and $H$ agree outside $V_{CE}$
does not restrict the values $x_i$ associated with vertices labeled with $C_k$, $E_{3k+2}$ or $E_{3k+3}$,
however,
it may restrict the values $x_i$ associated with vertices labeled with $E_{3k+1}$ to be (uniformly chosen)
from $E_{3k+1,1}$, $E_{3k+1,2}$, $E_{3k+1,3}$ or from a union of some of these three sets.

We next show that the function $r_{P,H'}(z)$ is a polynomial in $z$.
Two vertices of $H$ that are labeled with $C_k$ and $C_{k'}$, $k,k'\in\NN$,
are joined by an edge with probability equal to $0$, $z^M$ or $1-z^M$
for a finite multiset $M$ of positive integers;
the event that such a pair of vertices is joined by an edge is independent of the rest of the structure of $G$.
The probability that a vertex labeled with $E_{3k+1}$, $E_{3k+2}$ or $E_{3k+3}$, $k\in\NN$,
has the same adjacencies in $G$ and $H$ is equal to one of the following expressions:
$0$, $1$,
 $p_j(z)$, $1-p_j(z)$, $p_j(z)/u_j$, $(u_j-p_j(z))/u_j)$, $(p_j(z)-l_j)/(1-l_j)$, $(1-p_j(z))/(1-l_j)$,
$(p_j(z)-l_j)/(u_j-l_j)$ and $(u_j-p_j(z))/(u_j-l_j)$,
$j\in\NN$; here, we use that $p_j(z)\in [l_j,u_j]$ for every $j\in\NN$.
Moreover, these events are independent of each other
because the graphon $W_P(z)$ is zero on the tile $E\times E$, and
they are also independent of the adjacencies between the vertices labeled with $C_k$, $k\in\NN$,
because for every $y\in E$,
the value of $W_P(z)$ for $(x,y)$ and $(x',y)$ is the same if $x$ and $x'$ are from the same $C_k$, $k\in\NN$.
Hence, $r_{P,H'}(z)$ is a product of the following expressions:
$0$, $1$, $z^M$, $1-z^M$, 
 $p_j(z)$, $1-p_j(z)$, $p_j(z)/u_j$, $(u_j-p_j(z))/u_j)$, $(p_j(z)-l_j)/(1-l_j)$, $(1-p_j(z))/(1-l_j)$,
$(p_j(z)-l_j)/(u_j-l_j)$ and $(u_j-p_j(z))/(u_j-l_j)$,
where $M$ is a finite multiset of positive integers and $j\in\NN$.
Moreover, for each expression that appears with a denominator of $1-l_j$, $u_j$ or $u_j-l_j$,
there is a corresponding term in the product defining the constant $s_{P,H'}$.
We conclude that $p_{P,H'}(z)$ is a product of a constant from $[0,1]$ and terms equal to one of
the following expressions: $0$, $1$, $z^M$, $1-z^M$, $p_j(z)$, $1-p_j(z)$, $u_j-p_j(z)$ and $p_j(z)-l_j$,
where $M$ is a finite multiset of positive integers and $j\in\NN$.
Set
\begin{equation}
q_{P,H'}(z)=p_{P,H'}(z)\cdot\prod_{i=1}^n|X_i|\,\mbox{.}
\label{eq-approx}
\end{equation}
The definition of $\tau(H,W_P)$ now implies that
\[\tau(H,W_P(z))=\sum_{H'\in\HH}q_{P,H'}(z)\]
for every $z\in [0,1]^\NN$ that satisfies $p_j(z)\in [l_j,u_j]$ for all $j\in\NN$.

We next establish the existence of the constants $\beta_{H',i}$.
Fix a multiset $M_i$ of positive integers.
We are going to trace how the monomial $z^{M_i}$ can appear in the product defining the polynomial $p_{P,H'}(z)$ for $H'\in\HH$.

Recall that $V_C$ and
$V_E$ are the sets vertices of $H'$ labeled with $C_k$, $k\in\NN$ and $E_k$, $k\in\NN$, respectively. In the analysis above, we expressed $p_{P,H'}(z)$ as a product of
terms corresponding to vertices of $V_E$ and
terms corresponding to pairs of vertices from $V_C$.
So, consider a partition $\parti$ of $M_i$ into $|V_E|+\binom{|V_C|}{2}\le n^2$ multisets
$M^{v_j}$, $v_j\in V_E$, and $M^{v_jv_{j'}}$, $\{v_j,v_{j'}\}\subseteq V_C$.
Notice that for $k>0$, a pair of vertices $v_j$ and $v_{j'}$ from $V_C$ can contribute to the product defining $p_{P,H'}(z)$
by the monomial $z^{M_k}$ only if both $v_j$ and $v_{j'}$ are labeled by $C_{3k+1}$ or $C_{3k+2}$.
So, we say that a partition $\parti$ is \emph{consistent} with $H'$
if for each pair $v_j,v_j' \in V_C$ either $M^{v_jv_j'}$ is the empty set or $v_j,v_{j'} \in C_{3k+1} \cup C_{3k+2}$ and $M^{v_jv_j'}=M_k$.
Let $\partis_{H',i}$ be the set of all partitions of $M_i$ consistent with $H'$.

Observe that
if a vertex $v_j\in V_E$ contributes to the product defining $p_{P,H'}(z)$ by the monomial $z^{M_k}$,
then the corresponding coefficient in the product is at most $2^{-2^k}$.
In particular, it is at most $2^{-2^{\Sigma(M_k)}}$ by Observation~\ref{obs-monomialsadditive}.
We next set
\[\beta_{H',i}=\prod_{j=1}^n |X_j|\sum_{\parti \in \partis_{H',i}} \prod_{v_{j'} \in V_E} 2^{-2^{\Sigma(M^{v_{j'}})}}.\] 
The analysis above implies that $|\alpha_{P,H',i}| \le \beta_{H',i}$.

We next prove the existence of the constant $c_H$ and that the sum $\sum_{H' \in \HH} \beta_{H',i}$ is finite for all $i\in\NN$.
Consider a partition $\parti \in \partis_{H',i}$ and let
\[S=\sum_{v_j\in V_E}\Sigma(M^{v_j}).\]
Note that there must exist a pair of vertices $v_j,v_{j'}\in V_C$ with $M^{v_jv_{j'}}=M_k$ such that
$k\ge\Sigma(M_k)\ge(\Sigma(M_i)-S)/n^2$ (the first inequality holds by Observation~\ref{obs-monomialsadditive}).
So, every partition of $M_i$ consistent with $H'\in\HH$ can be constructed as follows:
we first choose an integer $S$ between $0$ and $\Sigma(M_i)$.
We then choose which pairs of vertices from $V_C$ correspond to the empty set and
which to the unique multiset $M_k$ such that both vertices in the pair are labeled by $C_{3k+1}$ or $C_{3k+2}$.
This can be done in at most $2^{n^2}$ ways.
Then,
the remaining at most $S$ elements of $M_i$ are distributed among the sets $M^{v_j}$, $v_j\in V_E$, in at most $n^{S}$ ways.

We are now ready to estimate the sum of $\beta_{H',i}$.
This will be done by considering each $H'\in\HH$ together with all partitions of $M_i$ that are consistent with $H'$.
First, we choose which vertices of $H$ will belong to $V_C$ and to $V_E$; this can be done in $3^n$ ways.
Next, if $V_C$ is non-empty, we choose $i_C$ such that the vertex $v_{i_C}\in V_C$ is labeled with $C_{3k+1}$ or $C_{3k+2}$
for the largest value of $k$ among the vertices from $V_C$; the index $i_C$ can be chosen in at most $n$ ways.
Then, we choose the value of $S$ and
consider each $H'\in\HH$ such that the vertex $v_{i_C}$ is labeled with $C_{3k+1}$ or $C_{3k+2}$ for some $k\ge (\Sigma(M_i)-S)/n^2$.
Finally, we choose a partition of $M_i$ that is consistent with $H'$.
We established above that this can be chosen in at most $2^{n^2}\cdot n^S$ ways.
Observe that if $V_E$ is non-empty, then there must be a vertex $v_j \in V_E$ such that $\Sigma(M^{v_j}) \ge S/n$,
in particular, the last product in the definition of $\beta_{H',i}$ is at most $2^{-2^{S/n}}$.
If $V_E$ is empty, which implies $S=0$,
the bound $2^{-2^{S/n}}$ is off by a factor of two, however,
this is compensated for by an overestimate in the term $2^{n^2}$, so the bound $2^{n^2} n^S 2^{-2^{S/n}}$ is still valid.
We conclude that
\begin{align}
\sum_{H' \in \HH} \beta_{H',i} \le& \sum_{H' \in \HH}  \prod_{j=1}^n |X_j| \sum_{S=0}^{\Sigma(M_i)} 2^{n^2} n^S 2^{-2^{S/n}} \nonumber\\
\le &n\cdot 3^n \sum_{S=0}^{\Sigma(M_i)}\sum_{k \ge \frac{\Sigma(M_i)-S}{n^2}}\quad
  \sum_{\substack{H'\in\HH\\ X_{i_C}\in\{C_{3k+1},C_{3k+2}\}}}\quad
  2^{n^2}\cdot n^S\cdot 2^{-2^{S/n}}
  \prod_{j=1}^n |X_j|\nonumber\\
= & n\cdot 3^n\cdot 2^{n^2} \sum_{S=0}^{\Sigma(M_i)} n^S \cdot 2^{-2^{S/n}} \sum_{k \ge \frac{\Sigma(M_i)-S}{n^2}}
  \sum_{\substack{H'\in\HH\\ X_{i_C}\in\{C_{3k+1},C_{3k+2}\}}} \prod_{j=1}^n |X_j| \nonumber\\
= & n\cdot 3^n\cdot 2^{n^2} \sum_{S=0}^{\Sigma(M_i)} n^S \cdot 2^{-2^{S/n}} \sum_{k \ge \frac{\Sigma(M_i)-S}{n^2}} 2|C_{3k+1}|
\sum_{\substack{H'\in\HH\\ X_{i_C}\in\{C_{3k+1},C_{3k+2}\}}} \prod_{j \ne i_C} |X_j| \nonumber\\
\le & n\cdot 3^n\cdot 2^{n^2} \sum_{S=0}^{\Sigma(M_i)}n^S\cdot 2^{-2^{S/n}}\cdot 2^{-(\Sigma(M_i)-S)/n^2}\nonumber\\
\le & n\cdot 3^n\cdot 2^{n^2} \sum_{S=0}^{\Sigma(M_i)}2^{nS - 2^{S/n} - \Sigma(M_i)/n^2}.\label{eq-sumS}
\end{align}
In particular, the sum is finite for every $i\in\NN$.

We next bound the exponents in the summands in \eqref{eq-sumS}.
If $S\le \Sigma(M_i)/(2n^3)$,
then we have:
\[ nS -2^{S/n} -\frac{\Sigma(M_i)}{n^2}
   \le nS-\frac{\Sigma(M_i)}{n^2}
   \le -\frac{\Sigma(M_i)}{2n^2}.\]
If $S\ge \Sigma(M_i)/(2n^3)$ and $2nS\le 2^{S/n}$,
then we obtain the following:
\[ nS-2^{S/n}-\frac{\Sigma(M_i)}{n^2}
   \le nS-2^{S/n} \le -nS \le -\frac{\Sigma(M_i)}{2n^2}.\]
Since $n$ is the number of vertices of $H$, i.e., $n$ is fixed,
there exists $\Sigma_0$ such that
if $\Sigma(M_i)\ge\Sigma_0$, then any $S\ge \Sigma(M_i)/(2n^3) \ge \Sigma_0/(2n^3)$ satisfies $2nS\le 2^{S/n}$.
We conclude that there exists $c_H\in (0,1)$ such that
\[n\cdot 3^n\cdot 2^{n^2}\cdot \sum_{S=0}^{\Sigma(M_i)}2^{nS -2^{S/n} -\Sigma(M_i)/n^2}
  \le n\cdot 3^n\cdot 2^{n^2}\cdot (\Sigma(M_i)+1)\cdot 2^{-\frac{\Sigma(M_i)}{2n^2}}\le c_H^{\Sigma(M_i)}\]
if $\Sigma(M_i)$ is sufficiently large.

It now remains to establish the last part of the statement of the lemma.
Fix $H'\in\HH$ and let $\eps>0$ be given.
Let $k_{P,H'}$ be the largest index $k$ such that a vertex of $H'$ is labeled with $E_{3k+1}$ or $E_{3k+2}$.
If two bounding sequences $P$ and $P'$ agree on the first $k_{P,H'}$ elements,
then the polynomials $r_{P,H'}(z)$ and $r_{P',H'}(z)$ are the same.
Let $D$ be such that the values of $r_{P,H'}(z)$ and all its first partial derivatives on $[0,1]^\NN$ belong to $[-D,D]$.
Next, choose $\eps_{P,H'}$ small enough such that
if the $L_1$-distance of the graphons $W_P(z)$ and $W_{P'}(z)$ is at most $\eps_{P,H'}$,
then the values $s_{P,H'}$ and $s_{P',H'}$ differ by at most $\eps/D$ (regardless of the choice of $z\in [0,1]^\NN$).
For this choice of $k_{P,H'}$ and $\eps_{P,H'}$,
the polynomials $q_{P,H'}(z)$ and $q_{P',H'}(z)$ are $\eps$-close
if the $L_1$-distance of $P$ and $P'$ is at most $\eps_{P,H'}$ and $P$ and $P'$ agree on the first $k_{P,H'}$ elements.
This finishes the proof of the lemma.
\end{proof}

We emphasize that $\tau(H,W_P(z))$ and the right side of \eqref{eq-densityinfinitesum}
are not necessarily equal for all $z\in [0,1]^\NN$,
in particular, for those $z\in [0,1]^\NN$ that do not satisfy the constraints implied by the bounding sequence $P$.

We next show that the density of each graph $H$ in $W_P(z)$ is equal to a totally analytic function of $z$
for those $z\in [0,1]^\NN$ that satisfy the constraints implied by the bounding sequence $P$.
We start with the following simple observation.

\begin{lemma}
\label{lm-expmonomboundfinitesum}
Let $M_i$ be the $i$-th multiset in the linear order defined in Section~\ref{sec-gensetup}.
For every real $c\in (0,1)$ and integer $k\in\NN_0$, the sum
\[\sum_{i=1}^\infty |M_i|^k c^{\Sigma(M_i)}\] 
is finite.
\end{lemma}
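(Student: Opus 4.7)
The plan is to reindex by the value of $\Sigma(M_i)$, which reduces the statement to a standard fact about the partition function. Let $p(n)$ denote the number of multisets $M$ of positive integers with $\Sigma(M)=n$, i.e., the usual partition function of $n$. Since every element of such a multiset is at least $1$, we have $|M|\le\Sigma(M)$. Grouping the summands by the value of $\Sigma(M_i)$ therefore gives
\[
\sum_{i=1}^\infty |M_i|^k\,c^{\Sigma(M_i)}
=\sum_{n=0}^\infty c^n\sum_{\substack{M\\ \Sigma(M)=n}}|M|^k
\le\sum_{n=0}^\infty n^k\,p(n)\,c^n,
\]
so it suffices to show that the last sum is finite.

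For this, I would invoke the classical Euler generating function identity
\[
\sum_{n=0}^\infty p(n)\,x^n=\prod_{j=1}^\infty\frac{1}{1-x^j},
\]
which has radius of convergence $1$. Pick any $c'\in(c,1)$. Since $\sum p(n)(c')^n$ converges, its terms are bounded, say $p(n)(c')^n\le K$ for all $n$. Consequently $p(n)\,c^n\le K(c/c')^n$, and since $c/c'\in(0,1)$ the series $\sum n^k(c/c')^n$ is convergent by the ratio test, giving
\[
\sum_{n=0}^\infty n^k\,p(n)\,c^n\le K\sum_{n=0}^\infty n^k\left(\tfrac{c}{c'}\right)^{\!n}<\infty.
\]

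There is no real obstacle: the argument is essentially bookkeeping. The only observation of substance is that summing over finite multisets of positive integers weighted by $c^{\Sigma(M)}$ is the same as summing partitions with weight $c^n$, and the partition generating function has radius of convergence $1$; the extra polynomial factor $|M_i|^k\le n^k$ is absorbed by the strict inequality $c<1$.
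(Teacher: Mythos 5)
Your proof is correct and takes essentially the same route as the paper: group the terms by $n=\Sigma(M_i)$, bound $|M_i|^k\le n^k$, and control the number of partitions of $n$ by a subexponential estimate so that the factor $c^n$ dominates. The only difference is cosmetic --- the paper quotes the classical bound that the partition function is at most $e^{10\sqrt{n}}$ for large $n$, whereas you deduce the needed subexponential growth from the radius of convergence of Euler's generating function; both arguments yield the same conclusion.
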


\begin{proof}
We group the summands into groups with the same value of $\Sigma(M_i)$.
The number of multisets $M_i$ with $\Sigma(M_i)=n$
is the number of ways that an integer $n$ can be expressed as a sum of positive integers (where the order does not matter).
This number is well-known to be at most $e^{10\sqrt{n}}$ if $n \ge n_0$ for some $n_0\in\NN$.
Since $|M_i|\le\Sigma(M_i)$,
the sum from the statement of the lemma is at most
\begin{equation}
\sum_{n=1}^{n_0-1}\sum_{M_i,\Sigma(M_i)=n}|M_i|^k c^{\Sigma(M_i)}+\sum_{n=n_0}^\infty e^{10\sqrt{n}}n^kc^{n}.
\label{eq-expmonomboundfinitesum}
\end{equation}
Since the first sum in \eqref{eq-expmonomboundfinitesum} consists of finitely many summands, 
the sum from the statement of the lemma is finite.
\end{proof}

We also need the following theorem,
which can be found, for example, in~\cite[Theorem~7.17]{bib-rudin-book}.
\begin{theorem}
\label{thm-rudin}
Let $I=[a,b]$ and $f_n:I \to \RR$ be a sequence of functions differentiable on $I$.
If $f_n'$ converges uniformly to a function $g$ on $I$ and
there exists a point $x_0$ in the interior of $I$ such that $f_n(x_0)$ converges,
then the functions $f_n$ converge uniformly to a differentiable function $f:I \to \RR$ and $f'=g$.
\end{theorem}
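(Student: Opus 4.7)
The plan is to prove Theorem~\ref{thm-rudin} in the standard way, first promoting uniform convergence of $f_n'$ to uniform convergence of $f_n$ via the Mean Value Theorem, and then using an interchange of limits to identify the derivative of the limit function with $g$. Since $I$ is compact and $f_n'$ converges uniformly, the sequence $\{f_n'\}$ is uniformly Cauchy. Applying the MVT to $f_n - f_m$ on the interval between $x_0$ and an arbitrary $x\in I$ gives
\[
\bigl|(f_n(x)-f_m(x)) - (f_n(x_0)-f_m(x_0))\bigr| \le |x-x_0|\cdot \sup_{I}|f_n'-f_m'|.
\]
Combined with the fact that $\{f_n(x_0)\}$ is Cauchy in $\RR$, this shows that $\{f_n\}$ itself is uniformly Cauchy on $I$, hence converges uniformly to some continuous function $f: I\to\RR$.

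Next, I would fix an arbitrary point $x\in I$ and consider the difference quotients $\phi_n(y)=\frac{f_n(y)-f_n(x)}{y-x}$ defined for $y\in I\setminus\{x\}$. The same MVT applied to $f_n - f_m$ yields $|\phi_n(y)-\phi_m(y)| \le \sup_{I}|f_n'-f_m'|$, so $\{\phi_n\}$ is uniformly Cauchy on $I\setminus\{x\}$. Since $f_n\to f$ pointwise, the uniform limit must be $\phi(y)=\frac{f(y)-f(x)}{y-x}$, and the convergence $\phi_n\to\phi$ is uniform on $I\setminus\{x\}$.

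For each $n$, $\lim_{y\to x}\phi_n(y)=f_n'(x)$, and by hypothesis $f_n'(x)\to g(x)$. By the standard double-limit theorem for uniformly convergent sequences (if $\phi_n\to\phi$ uniformly and each $\phi_n$ has a limit $L_n$ at $x$, then $L_n$ converges and $\lim_{y\to x}\phi(y)=\lim_n L_n$), we conclude that $\lim_{y\to x}\phi(y)=g(x)$, which is precisely the statement $f'(x)=g(x)$. There is no serious obstacle here; the only subtlety is the justification of the interchange of limits in the last step, which is the content of the classical Moore--Osgood-type theorem and follows from an $\varepsilon/3$ argument combining uniform Cauchyness of $\phi_n$ with the convergence $\phi_n(y)\to L_n$ as $y\to x$ for each fixed $n$.
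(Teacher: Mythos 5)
Your proof is correct: the paper does not prove Theorem~\ref{thm-rudin} at all but simply cites it as Theorem~7.17 of Rudin's \emph{Principles of Mathematical Analysis}, and your argument is exactly the classical proof given there (Mean Value Theorem applied to $f_n-f_m$ to upgrade to uniform Cauchyness of $f_n$, then the same MVT bound on the difference quotients $\phi_n$ followed by the $\varepsilon/3$ double-limit interchange to identify $f'(x)=g(x)$). The only cosmetic remark is that the hypothesis that $x_0$ lies in the interior of $I$ is never needed in your argument, which works verbatim for any $x_0\in I$.
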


We are now ready to state a lemma that
will guarantee that the functions $t_{P,H}(z)$ defined further are totally analytic.

\begin{lemma}
\label{lm-monomialboundanalytic}
For $i \in \NN$, let $M_i$ be the $i$-th multiset in the linear order defined in Section~\ref{sec-gensetup}, and
let $\alpha_i$ be a real number.
Suppose that there exists $c\in (0,1)$ such that $|\alpha_i|\le c^{\Sigma(M_i)}$ for all but finitely many $i\in\NN$.
Then the function $t:[0,1]^\NN\to\RR$ defined as
\begin{equation}
t(z)=\sum_{i=1}^\infty \alpha_i z^{M_i}\label{eq-tpowerseries}
\end{equation}
is well-defined on $[0,1]^\NN$ and totally analytic.
Moreover, for any multiset $M$, the partial derivatives can be expressed as 
\begin{equation}
\frac{\partial^{k}}{(\partial z)^M} t(z)=\sum_{i=1}^\infty \alpha_i \frac{\partial^{k}}{(\partial z)^M}  z^{M_i}.\label{eq-tderivpowerseries}
\end{equation}
Finally, the infinite sums in \eqref{eq-tpowerseries} and \eqref{eq-tderivpowerseries} converge uniformly on $[0,1]^\NN$.
\end{lemma}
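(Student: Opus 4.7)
The plan is to apply the Weierstrass $M$-test to the series \eqref{eq-tpowerseries} and to each of its formal termwise partial-derivative series using Lemma~\ref{lm-expmonomboundfinitesum} for summability, and then to bootstrap the analyticity clause of ``totally analytic'' by extending $t$ complex-analytically onto polydisks slightly larger than $[0,1]^n$.

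First, since $|z^{M_i}|\le 1$ on $[0,1]^{\NN}$ and $|\alpha_{i}|\le c^{\Sigma(M_i)}$ for all but finitely many $i$, Lemma~\ref{lm-expmonomboundfinitesum} applied with $k=0$ shows that $\sum_i c^{\Sigma(M_i)}<\infty$, so the series \eqref{eq-tpowerseries} converges absolutely and uniformly on $[0,1]^{\NN}$ and $t$ is well-defined. For a fixed multiset $M$, direct differentiation of the monomial $z^{M_i}$ yields
\[\left|\frac{\partial^{|M|}}{(\partial z)^M} z^{M_i}\right| \le \prod_{j} \frac{\chi(M_i)_j!}{(\chi(M_i)_j - \chi(M)_j)!} \le |M_i|^{|M|}\]
on $[0,1]^{\NN}$ (the derivative vanishes unless $\chi(M)_j \le \chi(M_i)_j$ for every $j$, in which case each of the $|M|$ factors is at most $|M_i|$). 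Combining this with the hypothesis and Lemma~\ref{lm-expmonomboundfinitesum} with $k=|M|$ yields absolute and uniform convergence of the series in \eqref{eq-tderivpowerseries} on $[0,1]^{\NN}$.

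To establish the analyticity clause of ``totally analytic'' and simultaneously justify the derivative formula, I would fix $n\in\NN$ and $(z^0_{n+1},z^0_{n+2},\ldots)\in[0,1]^{\NN\setminus[n]}$, and pick $R>1$ with $cR<1$. Treating $t$ as a function of $(z_1,\ldots,z_n)$ only and allowing these variables to be complex, each summand $\alpha_i z^{M_i}$ is holomorphic on the polydisk $D_R=\{(z_1,\ldots,z_n)\in\mathbb{C}^n:|z_j|\le R\}$ and is bounded there by $|\alpha_i| R^{\sum_{j\le n}\chi(M_i)_j}\le (cR)^{\Sigma(M_i)}$ for $i$ large (using $|z^0_j|\le 1$ for $j>n$ and $\sum_{j}\chi(M_i)_j\le\Sigma(M_i)$ since each index is a positive integer). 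Lemma~\ref{lm-expmonomboundfinitesum} applied with $c$ replaced by $cR$ then gives summability of the majorant, so by the Weierstrass $M$-test the series converges uniformly on $D_R$ to a holomorphic function, which provides the required analytic extension to an open set containing $[0,1]^n$. Moreover, since uniform limits of holomorphic functions have complex derivatives equal to the uniform limits of the termwise derivatives, the identity \eqref{eq-tderivpowerseries} holds for every partial derivative with respect to $z_1,\ldots,z_n$; letting $n$ grow covers every finite multiset $M$.

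It remains to verify that each such partial derivative is continuous in the product topology on $[0,1]^{\NN}$. Each partial sum of \eqref{eq-tderivpowerseries} depends on only finitely many coordinates and is therefore continuous in the product topology; since uniform convergence preserves continuity on any topological space, the infinite sum is also continuous in the product topology. The main obstacle, though purely combinatorial, is the bound on $\prod_j \chi(M_i)_j!/(\chi(M_i)_j-\chi(M)_j)!$ by $|M_i|^{|M|}$ so that the tail hypothesis matches the form required by Lemma~\ref{lm-expmonomboundfinitesum}; once that is in place, the complex-analytic extension on the enlarged polydisk supplies the remaining properties essentially for free, and Theorem~\ref{thm-rudin} is not even needed in this argument.
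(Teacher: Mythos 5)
Your proof is correct, and while it rests on the same quantitative core as the paper's, it reaches the two delicate conclusions (termwise differentiation and analyticity) by a genuinely different mechanism. The shared core: both arguments majorize the series and its termwise $M$-derivatives by $|M_i|^{|M|}\,(c')^{\Sigma(M_i)}$ for a suitable $c'<1$ and invoke Lemma~\ref{lm-expmonomboundfinitesum}; your falling-factorial bound $\prod_j \chi(M_i)_j!/(\chi(M_i)_j-\chi(M)_j)!\le |M_i|^{|M|}$ is exactly the paper's estimate $|\beta_i|\le|\alpha_i|\cdot|M_i|^k$. Where you diverge: the paper stays entirely in the real domain, enlarging the cube to $[-\eps,1+\eps]^\NN$ with $c(1+\eps)<1-\eps$ and then applying Theorem~\ref{thm-rudin} by induction on $|M|$, one variable at a time with the remaining coordinates frozen, to identify the derivative of the limit with the limit of the termwise derivatives; you instead fix the tail $(z^0_{n+1},z^0_{n+2},\ldots)$, complexify $(z_1,\ldots,z_n)$ on a polydisk of radius $R>1$ with $cR<1$, and use the Weierstrass convergence theorem for holomorphic functions, which yields the analytic extension demanded by the definition of totally analytic and the identity \eqref{eq-tderivpowerseries} in one stroke (your reductions are sound: $\sum_{j\le n}\chi(M_i)_j\le\Sigma(M_i)$ gives the $(cR)^{\Sigma(M_i)}$ majorant, and taking $n\ge\max M$ covers every finite multiset). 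Your route makes the analyticity clause fully explicit, which in the paper is dispatched rather briskly after the Rudin-type argument, at the cost of importing a standard fact from several complex variables that the paper does not cite; the paper's route uses only the real-variable Theorem~\ref{thm-rudin} already quoted in Section~\ref{sec-analyze}, but needs the induction on the order of the derivative. You also spell out the continuity of the derivatives in the product topology (finitely-many-coordinate partial sums plus uniform convergence), a point the paper leaves implicit; this step is needed for the first clause of total analyticity and your treatment of it is correct.
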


\begin{proof}
Fix $\eps>0$ such that $c(1+\eps)<1-\eps$.
Observe that if $z\in[-\eps,1+\eps]^\NN$, then $|z^{M_i}|\le (1+\eps)^{|M_i|}\le (1+\eps)^{\Sigma(M_i)}$ for every $i$,
in particular, $|\alpha_i z^{M_i}|\le (1-\eps)^{\Sigma(M_i)}$ for all but finitely many $i\in\NN$.
Lemma~\ref{lm-expmonomboundfinitesum} applied with $1-\eps$ and $k=0$ yields that
the infinite sum defining the function $t$ converges uniformly on $[-\eps,1+\eps]^\NN$,
in particular, the function $t$ is well-defined on $[0,1]^\NN$ and
the sum \eqref{eq-tpowerseries} converges uniformly on $[0,1]^\NN$.

Fix a finite multiset $M$ of positive integers and
define $\beta_i$ to be the coefficient of the derivative of $\alpha_i z^{M_i}$ with respect to all $z_j$, $j\in M$.
Observe that $\beta_i=0$ whenever $M$ is not a subset of $M_i$.
We claim that
\begin{equation}
\frac{\partial^{k}}{(\partial z)^M} t(z)=\sum_{i=1}^\infty \beta_i z^{M_i\setminus M},\label{eq-tderivpowerseries-inproof}
\end{equation}
where $k=|M|$.
Since $|\beta_i|\le |\alpha_i|\cdot |M_i|^k$ and
$|z^{M_i\setminus M}|\le (1+\eps)^{\Sigma(M_i)}$ for every $z\in [-\eps,1+\eps]^\NN$,
we get that $|\beta_i z^{M_i\setminus M}|\le |M_i|^k\cdot (1-\eps)^{\Sigma(M_i)}$ for every such $z$ and for all but finitely many $i\in\NN$.
Hence, Lemma~\ref{lm-expmonomboundfinitesum} applied with $1-\eps$ and $k$ implies that
the right side of \eqref{eq-tderivpowerseries-inproof} uniformly converges on $[-\eps,1+\eps]^\NN$.
Let $g(z)$ be its limit for $z\in [-\eps,1+\eps]^\NN$.

By induction on $k$, we show that \eqref{eq-tderivpowerseries-inproof} holds for all multisets $M$.
Since the base of the induction and the induction step follow the same line of arguments,
we just focus on the case $k=1$, i.e., $M=\{j\}$ for some $j\in\NN$.
Let $f_n(z)$ be the sum of the first $n$ terms on the right side in \eqref{eq-tpowerseries}.
Since $\frac{\partial}{\partial z_j}f_n(z)$
is the sum of the first $n$ terms on the right side in \eqref{eq-tderivpowerseries-inproof}, 
$\frac{\partial}{\partial z_j}f_n(z)$ converges to $g(z)$.
Since $f_n$ converges to $f$ everywhere on $[-\eps,1+\eps]^\NN$,
we conclude using Theorem~\ref{thm-rudin} (applied to each fixed choice of $z_i \in [-\eps,1+\eps]$, $i\not\in M$) that
$\frac{\partial}{\partial z_j}t(z)$ is $g(z)$.
It follows that the function $t(z)$ is totally analytic.
\end{proof}

We are now ready to define the function $t_{P,H}(z)$.
Fix a bounding sequence $P$ and a graph $H$, and
let $M_i$ be the $i$-th multiset in the linear order defined in Section~\ref{sec-gensetup}.
We apply Lemma~\ref{lm-densityinfinitesum} and
let $\alpha_{P,i}$ be the sum of the coefficients $\alpha_{P,H',i}$ of $z^{M_i}$ 
in the polynomials $q_{P,H'}(z)$ in \eqref{eq-densityinfinitesum},
where the sum is taken over $H'\in\HH$.
Note that there exists $c_H\in (0,1)$ such that $|\alpha_{P,i}|\le c_H^{\Sigma(M_i)}$ for all but finitely many $i\in\NN$.
By Lemma~\ref{lm-monomialboundanalytic},
the function
\[t_{P,H}(z)=\sum_{i=1}^\infty \alpha_{P,i} z^{M_i}\]
is well-defined on $[0,1]^\NN$ and totally analytic.

We are now ready to prove Lemma~\ref{lm-densityanalytic}.
\begin{proof}[Proof of Lemma~\ref{lm-densityanalytic}]
We have already shown that the function $t_{P,H}(z)$ is totally analytic.
Hence, we need to show that
$t_{P,H}(z)=\tau(H,W_P(z))$ for every $z\in[0,1]^\NN$ that satisfies $p_j(z) \in [l_j,u_j]$ for all $j \in \NN$.
By Lemma~\ref{lm-densityinfinitesum}, we need to show that
\[t_{P,H}(z)=\sum_{H' \in \HH} q_{P,H'}(z),\]
where $\HH$ is the countable set $\HH$ from the statement the lemma.
This is equivalent to establishing that
\[t_{P,H}(z)=\sum_{i=1}^\infty \alpha_{P,i} z^{M_i}
            =\sum_{i=1}^\infty\sum_{H' \in \HH}\alpha_{P,H',i} z^{M_i}
	    =\sum_{H' \in \HH}\sum_{i=1}^\infty\alpha_{P,H',i} z^{M_i}
	    =\sum_{H' \in \HH} q_{P,H'}(z),\]
where $\alpha_{P,H',i}$ is the coefficient of $z^{M_i}$ in $q_{P,H'}(z)$.
The first, second and fourth equalities follow from the definitions.
The third equality holds because the sum of the absolute values of the terms is finite. Indeed, since $z\in [0,1]^\NN$,
we have
\begin{equation}
\sum_{i=1}^\infty\sum_{H' \in \HH}\left|\alpha_{P,H',i} z^{M_i}\right|\le 
\sum_{i=1}^\infty\sum_{H' \in \HH}\left|\alpha_{P,H',i}\right|.\label{eq-densityanalytic}
\end{equation}
Since all but finitely many terms $\sum_{H' \in \HH}\left|\alpha_{P,H',i}\right|$
are at most $c_H^{\Sigma(M_i)}$, which is at most $c_H^i$ by Observation~\ref{obs-monomialsadditive},
the sum \eqref{eq-densityanalytic} is indeed finite.
This completes the proof of the lemma.
\end{proof}

To prove Lemma~\ref{lm-WPvaries}, we need an additional lemma.

\begin{lemma}
\label{lm-approx-stronger}
For every bounding sequence $P$, graph $H$, and real $\eps>0$,
there exist a real $\eps_{P,H}>0$ and an integer $k_{P,H}$ with the following property.
If $P'$ is a bounding sequence that agrees with $P$ on the first $k_{P,H}$ elements and
has $L_1$-distance at most $\eps_{P,H}$ from $P$,
then the functions $t_{P,H}(z)$ and $t_{P',H}(z)$ are $\eps$-close.
\end{lemma}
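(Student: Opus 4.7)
The plan is to write $t_{P,H}(z)=\sum_{H'\in\HH}q_{P,H'}(z)$ (valid on all of $[0,1]^\NN$ by the proof of Lemma~\ref{lm-densityanalytic}), do the same for $P'$, and split the difference $t_{P,H}-t_{P',H}$ into a finite head over a large finite $\HH_0\subseteq\HH$ and a tail over $\HH\setminus\HH_0$. The finite head will be controlled by applying the furthermore clause of Lemma~\ref{lm-densityinfinitesum} to each of the finitely many $H'\in\HH_0$. The critical point is that the tail can be made uniformly small in both $P$ and $P'$, because the dominating bounds $|\alpha_{P'',H',i}|\le\beta_{H',i}$ supplied by Lemma~\ref{lm-densityinfinitesum} do not depend on the bounding sequence $P''$.

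To handle partial derivatives (which are part of the definition of $\eps$-closeness), I would observe that on $[0,1]^\NN$ one has $|q_{P'',H'}(z)|\le\sum_i\beta_{H',i}$, and since $|\partial_{z_j}z^{M_i}|\le\Sigma(M_i)$ on $[0,1]^\NN$, also $|\partial_{z_j}q_{P'',H'}(z)|\le\sum_i\Sigma(M_i)\beta_{H',i}$, with termwise differentiation justified exactly as in Lemma~\ref{lm-monomialboundanalytic}. Using $\sum_{H'\in\HH}\beta_{H',i}\le c_H^{\Sigma(M_i)}$ for all but finitely many $i$ together with Lemma~\ref{lm-expmonomboundfinitesum} (applied with $k=0$ and with $k=1$), both double sums $\sum_{H'\in\HH}\sum_i\beta_{H',i}$ and $\sum_{H'\in\HH}\sum_i\Sigma(M_i)\beta_{H',i}$ are finite. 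Choose a finite $\HH_0\subseteq\HH$ so that $\sum_{H'\in\HH\setminus\HH_0}\sum_i(1+\Sigma(M_i))\beta_{H',i}\le\eps/4$; this bounds the tail contributions to $|t_{P,H}(z)-t_{P',H}(z)|$ and to each $|\partial_{z_j}t_{P,H}(z)-\partial_{z_j}t_{P',H}(z)|$ by $\eps/2$ on $[0,1]^\NN$, uniformly in $P$ and $P'$.

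For the head, apply the furthermore clause of Lemma~\ref{lm-densityinfinitesum} to each $H'\in\HH_0$ with tolerance $\eps/(2|\HH_0|)$, producing integers $k_{P,H'}$ and reals $\eps_{P,H'}>0$, and set $k_{P,H}=\max_{H'\in\HH_0}k_{P,H'}$ and $\eps_{P,H}=\min_{H'\in\HH_0}\eps_{P,H'}$. Whenever $P'$ agrees with $P$ on the first $k_{P,H}$ elements and has $L_1$-distance at most $\eps_{P,H}$ from $P$, each pair $q_{P,H'}$ and $q_{P',H'}$ with $H'\in\HH_0$ is $\eps/(2|\HH_0|)$-close, so the triangle inequality yields that the finite head contributes at most $\eps/2$ to both the function difference and every first partial derivative. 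Combined with the uniform tail bound this gives $\eps$-closeness of $t_{P,H}$ and $t_{P',H}$.

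The main obstacle is the tail analysis for partial derivatives: differentiating a monomial $z^{M_i}$ introduces a factor of at most $\Sigma(M_i)$, and one must check that this extra polynomial factor does not destroy summability. This is where the asymptotic bound $\sum_{H'\in\HH}\beta_{H',i}\le c_H^{\Sigma(M_i)}$ with $c_H<1$ becomes indispensable: it absorbs the factor $\Sigma(M_i)$ through Lemma~\ref{lm-expmonomboundfinitesum}. The remaining technicalities—uniform convergence of the differentiated series on $[0,1]^\NN$ and interchange of summation—are already covered by Lemmas~\ref{lm-expmonomboundfinitesum} and~\ref{lm-monomialboundanalytic}, and the fact that the bounds $\beta_{H',i}$ are independent of the bounding sequence makes the same reasoning apply uniformly to $P$ and $P'$ simultaneously.
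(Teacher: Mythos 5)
Your proposal is correct and follows essentially the same route as the paper: truncate to a finite subfamily of $\HH$ chosen independently of the bounding sequence, bound the tail uniformly via the $P$-independent majorants $\beta_{H',i}$ together with the bound $\sum_{H'}\beta_{H',i}\le c_H^{\Sigma(M_i)}$ and Lemma~\ref{lm-expmonomboundfinitesum}, and control the finite head by the furthermore clause of Lemma~\ref{lm-densityinfinitesum}, taking the maximum $k_{P,H'}$ and minimum $\eps_{P,H'}$. The only cosmetic difference is that the paper truncates first in the monomial index $i$ and then in $\HH$, while you truncate directly in $\HH$ using finiteness of the weighted double sum; the estimates are the same.
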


\begin{proof}
We first show that there exists a finite subset $\HH'$ of $\HH$ that can be chosen independently of $P$ such that
the function $\wt_{P,H}(z)$ defined as
\begin{equation}
\wt_{P,H}(z)=\sum_{H'\in\HH'}q_{P,H'}(z) \label{eq-approx-q}
\end{equation}
is $\eps/4$-close to $t_{P,H}(z)$.
Here $q_{P,H'}(z)$ are the polynomials \eqref{eq-poly-alpha} from the statement of Lemma \ref{lm-densityinfinitesum}. Recall that $|\alpha_{P,H',i}| \le \beta_{H',i}$, and for all but finitely $i\in\NN$:
\[\sum_{H'\in\HH}\beta_{H',i}\le c_H^{\Sigma(M_i)},\]
where $c_H\in (0,1)$ is the constant from the statement of Lemma~\ref{lm-densityinfinitesum}.
Let $i_0$ be such that
\[\sum_{i=i_0+1}^\infty\sum_{H'\in\HH}i\cdot \beta_{H',i}\le
  \sum_{i=i_0+1}^\infty i\cdot c_H^{\Sigma(M_i)}\le
  \frac{\eps}{16}\,\mbox{.}\]
Note that such an $i_0$ exists by Lemma~\ref{lm-expmonomboundfinitesum} and
can be chosen so that it does not depend on $P$.
Finally choose $\HH'$ to be a finite subset of $\HH$ such that
\[\sum_{i=1}^{i_0}\sum_{H'\in\HH \setminus\HH'}\beta_{H',i}\cdot i\le\frac{\eps}{8}.\]
Note that such a set $\HH'$ exists and can be chosen independently of $P$
because for each $i \in \NN$, the sum $\sum_{H' \in \HH} \beta_{H',i}$ is finite.
The choice of $\HH'$ guarantees that for any bounding sequence $P$,
\[\sum_{i=1}^{i_0}\left|\alpha_{P,i}-\sum_{H'\in\HH'}\alpha_{P,H',i}\right|\cdot i
  \sum_{i=1}^{i_0}\left|\sum_{H'\in\HH \setminus\HH'}\alpha_{P,H',i}\right|\cdot i\le\frac{\eps}{8},\]
where $\alpha_{P,i}$ is the coefficient of $z^{M_i}$ in $t_{P,H}(z)$.

We now show that the functions $t_{P,H}(z)$ and $\wt_{P,H}(z)$ are $\eps/4$-close.
Let $z\in [0,1]^\NN$. Using \eqref{eq-tpowerseries}, we have the following estimates:
\begin{align*}
\left|t_{P,H}(z)-\wt_{P,H}(z)\right| & = \left|\sum_{i=1}^\infty \alpha_{P,i}z^{M_i}-\sum_{i=1}^\infty\sum_{H'\in\HH'}\alpha_{P,H',i}z^{M_i}\right| \\
                                     & \le \sum_{i=1}^\infty \left|\alpha_{P,i}-\sum_{H'\in\HH'}\alpha_{P,H',i}\right|\cdot \left|z^{M_i}\right| \\
				     & \le \sum_{i=1}^\infty \left|\alpha_{P,i}-\sum_{H'\in\HH'}\alpha_{P,H',i}\right|\\
				     & \le \frac{\eps}{8}+\sum_{i=i_0+1}^\infty \left|\alpha_{P,i}-\sum_{H'\in\HH'}\alpha_{P,H',i}\right|\\
				     & \le \frac{\eps}{8}+\sum_{i=i_0+1}^\infty \left|\alpha_{P,i}\right|+\sum_{i=i_0+1}^\infty \left|\sum_{H'\in\HH'}\alpha_{P,H',i}\right|\\
				     & \le \frac{\eps}{8}+2\sum_{i=i_0+1}^\infty \sum_{H'\in\HH}\left|\alpha_{P,H',i}\right| \le \frac{\eps}{4}\,\mbox{.}
\end{align*}
Likewise, for any $k\in\NN$, using \eqref{eq-tderivpowerseries}, we obtain the following estimates for the derivative with respect to $z_k$:
\begin{align*}
\left|\frac{\partial}{\partial z_k}t_{P,H}(z)-\frac{\partial}{\partial z_k}\wt_{P,H}(z)\right|
     & = \left|\sum_{i=1}^\infty\alpha_{P,i} \frac{\partial}{\partial z_k}z^{M_i}-\sum_{i=1}^\infty\sum_{H'\in\HH'}\alpha_{P,H',i} \frac{\partial}{\partial z_k}z^{M_i}\right| \\
     & \le \sum_{i=1}^\infty \left|\left(\alpha_{P,H,i}-\sum_{H'\in\HH'}\alpha_{P,H',i}\right)\frac{\partial}{\partial z_k}z^{M_i}\right|\\
     & \le \sum_{i=1}^\infty \left|\alpha_{P,H,i}-\sum_{H'\in\HH'}\alpha_{P,H',i}\right| \cdot |M_i|\\
     & \le \frac{\eps}{8}+\sum_{i=i_0+1}^\infty \left|\alpha_{P,H,i}-\sum_{H'\in\HH'}\alpha_{P,H',i}\right| \cdot i \\
     & \le \frac{\eps}{8}+2\sum_{i=i_0+1}^\infty \sum_{H'\in\HH'} i \cdot \left|\alpha_{P,H',i}\right| \le \frac{\eps}{4}\,\mbox{.}
\end{align*}

Let $K$ be the size of $\HH'$.
For each $H'\in\HH$, let $k_{P,H'}$ and $\eps_{P,H'}$ be the values
from the second part of the statement of Lemma~\ref{lm-densityinfinitesum} for $\eps/2K$.
Set $k_{P,H}$ to be the maximum of the values $k_{P,H'}$ and
set $\eps_{P,H}$ to be the minimum of the values $\eps_{P,H'}$,
in both cases taken over $H'\in\HH'$.
Note that the values of $k_{P,H}$ and $\eps_{P,H}$ depend on $P$ and $H$ only.

Consider a bounding sequence $P'$ that agrees with $P$ on the first $k_{P,H}$ elements and that
has $L_1$-distance from $P$ at most $\eps_{P,H}$. Let
\[\wt_{P',H}(z)=\sum_{H'\in\HH'}q_{P',H'}(z),\]
where $q_{P',H'}(z)$ are the polynomials from the statement of Lemma~\ref{lm-densityinfinitesum}.
Lemma~\ref{lm-densityinfinitesum} yields that
the polynomials $q_{P,H'}(z)$ and $q_{P',H'}(z)$ are $\eps/2K$-close for every $H'\in\HH'$.
It follows that the functions $\wt_{P,H}(z)$ and $\wt_{P',H}(z)$ are $\eps/2$-close. Recall that the choice of $\HH'$ was independent of $P$, thus,
the functions $t_{P,H}(z)$ and $\wt_{P,H}(z)$ are $\eps/4$-close, and
the functions $t_{P',H}(z)$ and $\wt_{P',H}(z)$ are also $\eps/4$-close.
We conclude that the functions $t_{P,H}(z)$ and $t_{P',H}(z)$ are $\eps$-close.
\end{proof}

We are now ready to prove Lemma~\ref{lm-WPvaries}.

\begin{proof}[Proof of Lemma~\ref{lm-WPvaries}]
Suppose that a bounding sequence $P$, a graph $H$ and a real $\eps>0$ are given.
Apply Lemma~\ref{lm-approx-stronger} with $P$, $H$ and $\eps$ to get a real $\eps_{P,H}>0$ and an integer $k_{P,H}$, and
apply Lemma~\ref{lm-add-P} with $\eps_{P,H}$ to get an integer $k_P$.
We set $k=\max\{k_{P,H},k_P\}$.
Let $P'$ be a bounding sequence that is a $k$-strengthening of $P$.
Lemma~\ref{lm-add-P} yields that the $L_1$-distance of $P$ and $P'$ is at most $\eps_{P,H}$.
Hence, Lemma~\ref{lm-approx-stronger} implies that the functions $t_{P,H}(z)$ and $t_{P',H}(z)$ are $\eps$-close.
\end{proof}

\section{Concluding remarks}
\label{sec-concl}

Notice that it was not essential in the proof of Theorem~\ref{thm-main} that
the functions $t_k$ represent densities of $G_k$ in $W_P(z)$.
The key property that allowed us to apply the machinery in Section \ref{sec-diag}
was that the functions $t_k$ are totally analytic functions in a feasible region.
While this is not true for all continuous functions on the space of graphons (for example,
we need a certain kind of differentiability), 
many functions besides those corresponding to densities of graphs have this property as well.
An example of another function amenable to our methods is the entropy function of a graphon,
which is defined for a graphon $W$ as follows:
$$H(W)=\int_{[0,1]^2}W(x,y)\log W(x,y)+(1-W(x,y))\log (1-W(x,y))\dd x\dd y\,\mbox{.}$$
The entropy of a graphon $W$ measures the number of graphs close in the cut distance to $W$~\cite{bib-CV11}, and
graphons maximizing the entropy among those observing some density conditions
correspond to a typical graph satisfying those conditions.
There are many classical results on the typical structure of graphs avoiding induced subgraphs,
e.g.~\cite{bib-ABBM11,bib-ERK76,bib-KMRS14,bib-KPR87,bib-PS91,bib-PS92,bib-PS93}, and
there is also a good amount of literature, e.g.~\cite{bib-CD13,bib-LZ15,bib-RS13,bib-RS15,bib-RRS14},
on the typical structure of graphs with general density conditions and the related phase transition phenomenon.

Theorem~\ref{thm-main} can be extended to require that every graphon in the family $\WW$ has the same non-zero entropy,
i.e., the structure of graphs close to graphons in $\WW$ is not exponentially concentrated
around a single ``typical'' graphon. More precisely, we obtain the following theorem.

\begin{theorem}
\label{thm-entropy}
There exists a family of graphons $\WW$, graphs $H_1,\ldots,H_{\ell}$ and reals $d_1,\ldots,d_{\ell}$ such that
\begin{itemize}
\item a graphon $W$ is weakly isomorphic to a graphon contained in $\WW$ if and only if $d(H_i,W)=d_i$ for every $i\in [\ell]$,
\item all graphons in $\WW$ have the same non-zero entropy, and
\item no graphon in $\WW$ is finitely forcible.
\end{itemize}
\end{theorem}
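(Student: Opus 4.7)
My plan is to adapt the proof of Theorem~\ref{thm-main} by running the iterative construction of Section~\ref{sec-main} with one extra totally analytic function, the entropy $h_P(z):=H(W_P(z))$, inserted at the top of the list of functions $t_k$ to be stabilized. By the same diagonal argument, the resulting stabilizing system forces $h_P$ to be constant on the limit feasibility set, so every graphon in the family $\WW$ obtained at the end has exactly the same entropy; finite non-forceability is inherited from the proof of Theorem~\ref{thm-main} verbatim, since that argument produces, inside $\WW$, two distinct graphons agreeing on the first $r$ graph densities regardless of how many further totally analytic functions we insist on stabilizing.

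To put entropy into the framework, I would decompose it, on any feasible $z$, as
\[
h_P(z)=C_P+\sum_{k\in\NN} s_k\,\phi(z^{M_k}),\qquad \phi(u):=u\log u+(1-u)\log(1-u),
\]
where, by Lemma~\ref{lm-dep-z}, $C_P$ absorbs the contribution of all tiles on which $W_P(z)$ is independent of $z$, plus the zero contribution of the $\{0,1\}$-valued $z$-dependent tile $C\times E$, so that the only genuinely $z$-dependent terms come from the blocks inside $C\times C$ and have block-size coefficients $s_k$ of order $4^{-k}$. Because $\phi$ has logarithmic singularities at $0$ and $1$, $h_P$ is not totally analytic on all of $[0,1]^\NN$. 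I would fix this by inserting into the initial bounding sequence $P^0$, on disjoint preallocated indices, triples $\bigl(\tfrac{2^{-2^{j}}}{9}z_j,\tfrac{2^{-2^{j}}}{36},\tfrac{2^{-2^{j}}}{12}\bigr)$ for each $j\in\NN$; this respects the coefficient restriction $|\pi_{i,j}|\le 2^{-2^j}/9$, leaves infinitely many slots free for the iterative strengthenings of Section~\ref{sec-main}, and forces the feasibility set into $[1/4,3/4]^\NN$. On this compact subset of $(0,1)^\NN$, each $z^{M_k}$ is bounded away from $\{0,1\}$, so $\phi$ is analytic on a neighborhood of its argument; summability of $4^{-k}|M_k|^{\ell}$ (finite by Lemma~\ref{lm-expmonomboundfinitesum}) against the polynomial-in-$|M_k|$ growth of the iterated derivatives of $\phi$ yields total analyticity of $h_P$, and the analogue of Lemma~\ref{lm-WPvaries} for $h_P$ follows from Lemma~\ref{lm-add-P} together with the uniform Lipschitz behavior of $\phi$ on the restricted range. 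Taking $b^0=(1/2,\ldots,1/2)$ as the initial base point gives $z^{M_k}=2^{-|M_k|}\in(0,1)$, so every summand $\phi(z^{M_k})$ is strictly positive and $h_{P^0}(b^0)$ is bounded below by a positive constant; since the parameters $b^m_k$ move by at most $1/2^{m+1}$ per step and $h_{P^m}\to h_P$ uniformly, the stabilized value of $h_P$ remains positive.

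The main obstacle will be exactly the boundary singularity of the entropy function: unlike the subgraph-density functions of Section~\ref{sec-analyze}, $h_P$ is only analytic on $(0,1)^\NN$, so one must first trim the feasibility region to a compact subset of this interior before the machinery of Section~\ref{sec-diag} can be applied. Once the preallocated bounding-sequence slots enforcing $z_j\in[1/4,3/4]$ are interleaved with the slots $k_i$ used for the iterative strengthenings, the analogues of Lemmas~\ref{lm-densityanalytic} and~\ref{lm-WPvaries} for $h_P$ follow by the same pattern of reasoning as in Section~\ref{sec-analyze}, with the geometric decay $4^{-k}$ of the block sizes dominating the polynomial-in-$|M_k|$ growth of $\phi$'s derivatives on the trimmed domain.
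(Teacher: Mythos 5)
Your high-level plan is the same as the paper's: Theorem~\ref{thm-entropy} is proved in Section~\ref{sec-concl} by observing that the machinery of Section~\ref{sec-diag} only needs totally analytic functions, so the entropy is simply added to the functions being stabilized, with Lemma~\ref{lm-dep-z} showing that only the tile $C\times C$ contributes $z$-dependent entropy (the $\{0,1\}$-valued tile $C\times E$ contributes zero), and non-forcibility inherited from Theorem~\ref{thm-main}. The difference, and the gap, is in how you neutralize the logarithmic singularity of $u\log u+(1-u)\log(1-u)$. You keep the parametrization $W_P(z)$ and only add bounding-sequence constraints forcing $z_j\in[1/4,3/4]$; but restricting the \emph{feasible set} does not make $h_P(z)=H(W_P(z))$ totally analytic. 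Total analyticity, as defined in Section~\ref{sec-prelim} and used throughout Section~\ref{sec-diag}, demands continuity of all partial derivatives on all of $[0,1]^\NN$ and analytic extensions to open sets containing the full cube, and $h_P$ fails this at boundary points (first partials blow up as $z_j\to 0$ or $1$) regardless of which $z$ are feasible. Moreover, in the proof of Theorem~\ref{thm-main} the stabilization is not confined to the feasible region: the initial system has $U^0_k=[0,1]$, $V^0_k=[0,1]^k$, the functions and their derivatives are probed at points $(\wtw_{\le k}(x),b_{>k})$ ranging over the whole cube, and at each step the bounding sequence is constructed \emph{afterwards} to match the reachable set $Q^m_{m,1/2^m}$ of the stabilizing system, not the other way around. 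So ``trimming the feasibility region'' does not by itself restrict where Lemmas~\ref{lm-implicitsmallchangeclose}--\ref{lm-make-excellent} need Lipschitz control of derivatives; to follow your route you would have to redo the definition of total analyticity and the whole of Section~\ref{sec-diag} (and the invariants in the proof of Theorem~\ref{thm-main}) on the sub-box $[1/4,3/4]^\NN$, with a correspondingly restricted initial system $S^0$.

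The paper sidesteps all of this with a one-line reparametrization: it uses $\Wt_P(z)=W_P(z/2+1/4)$, so the $z$-dependent values in $C\times C$ become $(z/2+1/4)^{M_k}$, which stay in a compact subset of $(0,1)$ even for $z$ in a neighborhood of the cube; the entropy of $\Wt_P(z)$ is then genuinely totally analytic in $z$ and the existing machinery applies verbatim. Your derivative estimates are essentially the ones needed for that reparametrized function, though note two inaccuracies: the iterated derivatives of $\phi$ themselves are exponentially large in $|M_k|$ at arguments of size $4^{-|M_k|}$ (it is only after combining with the geometrically small chain-rule factors from the monomials, and with the block weights $s_k$, that the sums converge); and the paper's $H(W)$ is nonpositive, so the summands $\phi(z^{M_k})$ are strictly negative, not positive --- harmless, since only ``same and non-zero'' is needed (indeed the tiles $X\times R$ already guarantee non-zero entropy).
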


We remark that
we cannot use directly the family $\WW_P$ of graphons $W_P(z)$ to prove Theorem~\ref{thm-entropy}.
The reason is that the derivative of the entropy function $H(p)=p\log p+(1-p)\log (1-p)$,
which is integrated in the definition of graphon entropy, is not finite for $p\in\{0,1\}$, and
the values in some parts of the tile $C\times C$ can approach $0$ or $1$ as a function of $z\in [0,1]^\NN$.
An easy way to overcome this is to use the family of graphons $\Wt_P(z)$ defined as $\Wt_P(z)=W_P(z/2+1/4)$.
Then the values inside the tile $C\times C$, which depend on $z\in [0,1]^\NN$, stay bounded away from $0$ and $1$.
Note that this issue does not concern the tile $C\times E$:
all values in the tile $C\times E$ are either $0$ or $1$,
which implies that the integral of the entropy function over the tile $C\times E$ is always zero,
i.e., the integral is independent of $z\in [0,1]^\NN$.

The proof of Theorem~\ref{thm-main} presented here is not constructive
since the bounding sequence $P$ used in the proof is not explicitly constructed.
It is worth noting that Cooper et al. in~\cite{bib-turing},
which is an earlier version of \cite{bib-universal},
showed that any graphon in which the density of all dyadic squares can be approximated by a Turing machine
can be embedded in a finitely forcible graphon.
While the main result of~\cite{bib-turing} does not imply that any graphon can be embedded in a finitely forcible graphon,
it still provides an alternative way of proving Theorem~\ref{thm-main},
since the diagonalization arguments can be simulated by a suitably chosen Turing machine.
Although this would make the proof of Theorem~\ref{thm-main} more technical and significantly longer,
we would obtain a proof of Theorem~\ref{thm-main} where all the densities $d_1,\ldots,d_{\ell}$
can be taken to be rational,
i.e., we would obtain a counterexample to Conjecture~\ref{conj-main} involving only constraints that are finite in their nature.
In addition, such a proof of Theorem~\ref{thm-main} would also allow explicitly constructing the densities $d_1,\ldots,d_{\ell}$.

Finally, we would like to make a short remark about the orders of the graphs $H_1,\ldots,H_{\ell}$ in the statement of Theorem~\ref{thm-main}.
Each decorated constraint that we use involves graphs with at most $15$ vertices (including the constraints that
are included from the proof Theorem \ref{thm-universal} in \cite{bib-universal}).
Examining the proof of Lemma~\ref{lm-decorated} yields that the orders of graphs get multiplied by the number of parts increased by one,
which is $15$ in our case.
We conclude that all graphs $H_1,\ldots,H_{\ell}$ in the statement of Theorem~\ref{thm-main} have at most $225$ vertices.

\section*{Acknowledgement}

The authors would like to thank the anonymous reviewer for carefully reading the submitted manuscript and
the comments, which helped to improved the presentation of the results.

\newcommand{\advances}{Adv. Math. }
\newcommand{\annals}{Ann. of Math. }
\newcommand{\cpc}{Combin. Probab. Comput. }
\newcommand{\dam}{Discrete Appl. Math.}
\newcommand{\dcg}{Discrete Comput. Geom. }
\newcommand{\discrete}{Discrete Math. }
\newcommand{\eujc}{European J.~Combin. }
\newcommand{\gfa}{Geom. Funct. Anal. }
\newcommand{\jcta}{J.~Combin. Theory Ser.~A }
\newcommand{\jctb}{J.~Combin. Theory Ser.~B }
\newcommand{\rsa}{Random Structures Algorithms }
\newcommand{\sidma}{SIAM J.~Discrete Math. }
\newcommand{\tcs}{Theoret. Comput. Sci. }

\end{document}